\newtheorem{theorem}{Theorem}
\newtheorem{example}{Example}
\newtheorem{remark}{Remark}
\newtheorem{proposition}{Proposition}[section]
\newtheorem{lemma}{Lemma}[section]
\newtheorem{corollary}{Corollary}[section]
\newtheorem{definition}{Definition}[section]
\DeclareMathOperator{\convr}{\xrightarrow[]{\mathbb{r}}}
\DeclareMathOperator{\convo}{\xrightarrow[]{\mathbb{o}}}
\DeclareMathOperator{\convuo}{\xrightarrow[]{\mathbb{uo}}}
\DeclareMathOperator{\convc}{\xrightarrow[]{\mathbb{c}}}
\DeclareMathOperator{\convac}{\xrightarrow[]{\mathbb{ac}}}
\DeclareMathOperator{\convfc}{\xrightarrow[]{\mathbb{fc}}}
\DeclareMathOperator{\convsc}{\xrightarrow[]{\mathbb{sc}}}
\DeclareMathOperator{\convsr}{\xrightarrow[]{\mathbb{sr}}}
\DeclareMathOperator{\convuIc}{\xrightarrow[]{u_I\mathbb{c}}}
\DeclareMathOperator{\convuIuIc}{\xrightarrow[]{u_I(u_I\mathbb{c})}}
\DeclareMathOperator{\convuc}{\xrightarrow[]{u\mathbb{c}}}
\DeclareMathOperator{\convmc}{\xrightarrow[]{\mathbb{mc}}}
\DeclareMathOperator{\convmo}{\xrightarrow[]{\mathbb{mo}}}
\DeclareMathOperator{\convw}{\xrightarrow[]{\mathbb{w}}}
\DeclareMathOperator{\convcone}{\xrightarrow[]{\mathbb{c}_1}}
\DeclareMathOperator{\convctwo}{\xrightarrow[]{\mathbb{c}_2}}
\DeclareMathOperator{\convtau}{\stackrel{{\text{\boldmath{$\tau$}}}}{\rightarrow}}
\DeclareMathOperator{\convtaum}{\stackrel{{\text{\boldmath{$\tau_m$}}}}{\rightarrow}}
\DeclareMathOperator{\convmtau}{\stackrel{{\text{\boldmath{$m\tau$}}}}{\rightarrow}}
\begin{document}

\title{Full Lattice Convergence on Riesz Spaces}\maketitle\author{\centering{{Abdullah Ayd{\i}n$^{1}$, Eduard Emelyanov$^{2}$, 
Svetlana Gorokhova $^{3}$\\ \small $1$ Department of Mathematics, Mu\c{s} Alparslan University, Mu\c{s}, Turkey\\ \small $2$ Department of Mathematics, Middle East Technical University, Ankara, Turkey\\ 
\small $3$ Southern Mathematical Institute of the Russian Academy of Sciences, Vladikavkaz, Russia\\ 
\abstract
{The full lattice convergence on a locally solid Riesz space is an abstraction of the topological, order, and  relatively uniform convergences.
We investigate four modifications of a full convergence $\mathbb{c}$ on a Riesz space. The first one produces a sequential convergence $\mathbb{sc}$.
The second makes an absolute $\mathbb{c}$-convergence and generalizes the absolute weak convergence. The third modification makes an unbounded $\mathbb{c}$-convergence 
and generalizes various unbounded convergences recently studied in the literature. The last one is applicable whenever $\mathbb{c}$ is a full 
convergence on a commutative $l$-algebra and produces  the multiplicative modification $\mathbb{mc}$ of $\mathbb{c}$. We study general properties 
of full lattice convergence with emphasis on universally complete Riesz spaces and on Archimedean $f$-algebras.
The technique and results in this paper unify and extend those which were developed and obtained in recent literature on unbounded convergences. }\\
\vspace{2mm}

{\bf{Keywords:} \rm Riesz space, $l$-algebra, $d$-algebra, 
$f$-algebra, linear convergence, additive convergence, full convergence, lattice convergence, 
sequential $\mathbb{c}$-convergence, absolute $\mathbb{c}$-convergence, 
unbounded $\mathbb{c}$-convergence, multiplicative $\mathbb{c}$-convergence}
\vspace{2mm}

{\bf MSC2020:} {\normalsize 46A40, 46B42, 46J40, 46H99}

\section{Introduction and Preliminaries}

This paper deals with real Riesz spaces and various convergences on them. There are several approaches 
to formalization of the notion of convergence (see, e.g., the approach based on the filter convergence \cite{BB,W1} and references therein). 
We prefer an approach based on the following definition. 

\begin{definition}\label{convergence}
Let $S$ be a set. A {\em class $\mathbb{c}$ of pairs} $(C,c)$, where $C$ is a net in $S$ and $c\in S$, 
is called a {\em convergence on $S$}, if 
\begin{enumerate}
\item[$(a)$] \ for any constant net $B\equiv b$ in $S$, $(B,b)\in\mathbb{c}$$;$ 
\item[$(b)$] \ the class $\mathbb{c}$ is closed under passing to subnets, i.e. 
if $(C,c)\in\mathbb{c}$ and $B$ is a subnet of $C$ then $(B,c)\in\mathbb{c}$$;$
\item[$(c)$] \ $((c_\alpha)_{\alpha\in A},c)\in\mathbb{c}$ whenever a {\em tail of the net $(c_\alpha)_{\alpha\in A}$
converges to $c$}, i.e. $((c_\alpha)_{\alpha\in\{\xi\in A: \xi\ge\alpha_0\}},c)\in\mathbb{c}$ for some $\alpha_0\in A$.
\end{enumerate}
\noindent
A convergence $\mathbb{c}$ on a set $S$ is said to be$:$ 
\begin{enumerate}
\item[$(d)$] \ $T_1$ if $[((x_\alpha)_{\alpha\in A}, x)\in\mathbb{c}, \ ((x_\alpha)_{\alpha\in A}, y)\in\mathbb{c}] \ \Longrightarrow \ x=y$,
we write in this case $\mathbb{c}\in T_1$$;$
\item[$(e)$] \ {\em topological} if $\mathbb{c}$ is a convergence with respect to some topology on $S$.
\end{enumerate}
\end{definition}

One of the most common examples of convergence is the usual topological convergence on a topological space.  It is worth noticing that, for two topologies
$\tau_1$ and $\tau_2$ on $S$,  ${\text{\boldmath{$\tau$}}}_1 \subseteq {\text{\boldmath{$\tau$}}}_2$ iff $\tau_2\subseteq\tau_1$, where
we denote by ${\text{\boldmath{$\tau$}}}_1$ and ${\text{\boldmath{$\tau$}}}_2$ the convergences  on $S$ corresponding to topologies $\tau_1$ and $\tau_2$. 
Various non-topological convergences are widely presented through the present paper. Often we shall use the notation $C\convc c$ instead of $(C,c)\in\mathbb{c}$. 
We begin with a ``sequential" modification of convergences.

\begin{definition}\label{sequential c-convergence} 
Let $\mathbb{c}$ be a convergence on a set $S$. 
\begin{enumerate}
\item[$(a)$] \
A net $(x_{\alpha})_{\alpha\in A}$ in $S$ is said to be {\em $\mathbb{sc}$-convergent to $x\in X$} whenever for any subnet $(x_{\alpha_\beta})_{\beta\in B}$ of $(x_{\alpha})_{\alpha\in A}$ 
there exists a sequence $\beta_n$ of elements of $B$ such that $(x_{\alpha_{\beta_n}})_{n\in\mathbb{N}}\convc x$.
\item[$(b)$] \ The convergence $\mathbb{c}$ is called {\em sequential} if $\mathbb{sc}=\mathbb{c}$. 
\end{enumerate}
\end{definition}

It follows immediately from Definition \ref{sequential c-convergence} that $\mathbb{sc}$ is a convergence on $S$ for every convergence $\mathbb{c}$ on $S$,
and hence $\mathbb{ssc}=\mathbb{sc}$. Therefore the convergence $\mathbb{sc}$ is sequential  for any convergence $\mathbb{c}$ on $S$.

\begin{proposition}\label{sequential topological convergence}
Let $\mathbb{c}$ be the topological convergence on a topological space $(T,\tau)$.  The following are equivalent$:$
\begin{enumerate}
\item[$(i)$] \
the convergence $\mathbb{c}$ is sequential$;$
\item[$(ii)$] \
the topology $\tau$ on $T$ is sequential, that is$:$ for every $S\subseteq T$ and for every $x\in\text{cl}_{\tau}(S)$
there exists a sequence $x_n$ in $S$ such that $x_n\convc x$.
\end{enumerate}
\end{proposition}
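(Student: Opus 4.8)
The plan is to prove the two implications separately, unwinding the definitions of ``sequential convergence'' and ``sequential topology'' and reconciling them through the standard characterization of topological closure via nets.

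\emph{Proof of $(i)\Rightarrow(ii)$.} Suppose $\mathbb{c}$ is sequential, i.e. $\mathbb{sc}=\mathbb{c}$. Fix $S\subseteq T$ and $x\in\mathrm{cl}_\tau(S)$. Since $x$ lies in the closure of $S$, there is a net $(x_\alpha)_{\alpha\in A}$ in $S$ with $(x_\alpha)_{\alpha\in A}\convc x$. Because $\mathbb{sc}=\mathbb{c}$, this net is $\mathbb{sc}$-convergent to $x$; applying the definition of $\mathbb{sc}$-convergence to the trivial subnet (the net itself), I obtain a sequence $\alpha_n$ in $A$ with $(x_{\alpha_n})_{n\in\mathbb{N}}\convc x$. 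This is a sequence in $S$ that $\tau$-converges to $x$, which is exactly condition $(ii)$.

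\emph{Proof of $(ii)\Rightarrow(i)$.} Assume $\tau$ is sequential. Since $\mathbb{sc}\subseteq\mathbb{c}$ always holds (taking the subnet to be the whole net and then passing from the sequence back to the net via property $(c)$ — more precisely, any $\mathbb{sc}$-limit is a $\mathbb{c}$-limit because a net all of whose subnets have a further subsequence converging to $x$ must itself $\mathbb{c}$-converge to $x$, as $\mathbb{c}$ is topological and the subnet-subsequence condition characterizes topological convergence), it suffices to show $\mathbb{c}\subseteq\mathbb{sc}$. Let $(x_\alpha)_{\alpha\in A}\convc x$ and let $(x_{\alpha_\beta})_{\beta\in B}$ be any subnet; it too $\tau$-converges to $x$. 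I must produce a sequence $\beta_n$ in $B$ with $(x_{\alpha_{\beta_n}})_{n\in\mathbb{N}}\convc x$. The idea is to exploit sequentiality of $\tau$ together with a neighbourhood-base argument: if $x$ had no such convergent subsequence along this subnet, then $x$ would fail to be in the sequential closure of suitable tail-range sets, contradicting that $x$ is a genuine $\tau$-limit of the subnet. Concretely, one argues that $x$ lies in $\mathrm{cl}_\tau$ of every tail-set $\{x_{\alpha_\beta}:\beta\ge\beta_0\}$, hence by sequentiality each such tail-set contains a sequence converging to $x$; a diagonal selection across a countable cofinal-in-effect family of tails (or, if $B$ has no countable cofinal subset, a more careful argument extracting a convergent subsequence directly from the sequential closure) yields the required $\beta_n$.

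The main obstacle is precisely this last extraction step: in a general sequential space one cannot assume first-countability, so the cardinality and cofinality structure of the directed set $B$ need not interact nicely with sequences. The clean way around this is to note that for the purpose of $\mathbb{sc}$-convergence we only need \emph{some} subsequence of \emph{some} reindexing, so one should work with the sequential closure operator $\mathrm{scl}_\tau$ directly: sequentiality of $\tau$ means $\mathrm{scl}_\tau(S)=\mathrm{cl}_\tau(S)$ for all $S$ (this requires a short verification, since sequential closure need not be idempotent in general, but the stated form of $(ii)$ gives $\mathrm{cl}_\tau(S)\subseteq\mathrm{scl}_\tau(S)$, and the reverse inclusion is automatic). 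Then, given the subnet $(x_{\alpha_\beta})_{\beta\in B}$ converging to $x$, one shows $x\in\mathrm{scl}_\tau(R)$ for $R$ the range of the subnet (or of a cofinal piece), extracts the sequence, and checks via property $(c)$ of Definition~\ref{convergence} and the topological nature of $\mathbb{c}$ that this sequence indeed $\mathbb{c}$-converges to $x$. Assembling these pieces gives $\mathbb{c}\subseteq\mathbb{sc}$ and hence $\mathbb{c}=\mathbb{sc}$, completing the proof.
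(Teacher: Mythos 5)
Your proof is, in its final form, essentially the paper's: the forward implication is identical, and the route you eventually settle on for $(ii)\Rightarrow(i)$ --- pass to the $\tau$-closure of the \emph{range} of the subnet rather than of its tails --- is exactly what the paper does. Two of your detours deserve comment. First, the ``main obstacle'' you describe is not actually there: Definition~\ref{sequential c-convergence} only asks for \emph{some} sequence $\beta_n$ of elements of $B$, with no monotonicity or cofinality requirement, so once sequentiality of $\tau$ produces a sequence in $R=\{x_{\alpha_\beta}:\beta\in B\}$ converging to $x$ (and $x\in\text{cl}_{\tau}(R)$ is immediate because the subnet $\tau$-converges to $x$), you simply choose preimages $\beta_n$. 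No diagonal argument over tails, no issue with the cofinality of $B$, and no idempotency question for sequential closure arises, since condition $(ii)$ is the Fr\'echet--Urysohn property applied once to $R$; the extracted sequence $\mathbb{c}$-converges to $x$ by the very statement of $(ii)$, so the final ``check via property $(c)$'' is superfluous. Second, your justification of $\mathbb{sc}\subseteq\mathbb{c}$ appeals to the subnet--subnet characterization of topological convergence, which does not literally apply: the sequence $(x_{\alpha_{\beta_n}})_{n}$ need not be a subnet of $(x_{\alpha_\beta})_{\beta\in B}$. The correct (and shorter) argument, which is the one in the paper, is by contradiction: if $x_\alpha\stackrel{\mathbb{c}}{\nrightarrow}x$, some subnet lies entirely outside a neighborhood $U$ of $x$, and then no sequence of its terms can $\mathbb{c}$-converge to $x$, contradicting $\mathbb{sc}$-convergence. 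With these two points tightened, your argument coincides with the paper's proof.
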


\begin{proof}
$(i)\Longrightarrow(ii)$ \  Let $x\in\text{cl}_{\tau}(S)\subseteq T$. Then there exists a net $s_\alpha\in S$ which is 
convergent to $x$ with respect to the topology $\tau$, that is $s_\alpha\convc x$. Since the convergence $\mathbb{c}$ is sequential, 
$s_\alpha\convsc x$. Hence there exists a sequence $\alpha_n$ such that $S\ni s_{\alpha_n}\convc x$.
Thus the topology $\tau$ is sequential. 

$(ii)\Longrightarrow(i)$ \ Let the topology $\tau$ be sequential. Take a net $(x_{\alpha})_{\alpha\in A}$ in $T$. 

First suppose that $(x_{\alpha_\beta})_{\beta\in B}$ is a subnet of $(x_{\alpha})_{\alpha\in A}$ and $(x_{\alpha})_{\alpha\in A}\convc x$.
By Definition \ref{convergence}$(b)$, $(x_{\alpha_\beta})_{\beta\in B}\convc x$ and hence $x\in\text{cl}_{\tau}(\{x_{\alpha_\beta}:\beta\in B\})\subseteq T$.  
Since the topology $\tau$ is sequential, there exists a sequence $x_{\alpha_{\beta_n}}$ in the set $\{x_{\alpha_\beta}:\beta\in B\}$ such that $x_{\alpha_{\beta_n}}\convc x$. 
Therefore  $(x_{\alpha})_{\alpha\in A}\convsc x$.

Suppose now $(x_{\alpha})_{\alpha\in A}\convsc x$. Assume on the contrary that $(x_{\alpha})_{\alpha\in A} \stackrel{\mathbb{c}}{\nrightarrow} x$. 
Then there is $U\in\tau(x)$ and a subnet $(x_{\alpha_\beta})_{\beta\in B}$ of $(x_{\alpha})_{\alpha\in A}$ such that
$x_{\alpha_\beta}\not\in U$ for all $\beta\in B$. Since $(x_{\alpha})_{\alpha\in A}\convsc x$, there exists 
a sequence $\beta_n$ in $B$ such that $x_{\alpha_{\beta_n}}\convc x$ and hence $x\in\text{cl}_{\tau}(\{x_{\alpha_{\beta_n}}:n\in\mathbb{N}\})$.
Thus, there exists $n_0$ such that $x_{\alpha_{\beta_n}}\in U$ for all $n\ge n_0$. This is impossible since $x_{\alpha_\beta}\not\in U$ for all $\beta\in B$.
The obtained contradiction shows that $(x_{\alpha})_{\alpha\in A}\convc x$.

Therefore $\mathbb{sc}=\mathbb{c}$ as required.
\end{proof}

In particular, the topological convergence on any metric space is sequential. By Proposition \ref{r convergence is sequential},
the relatively uniform convergence is sequential on any Riesz space. By \cite[Rem.2.6]{DEM3}, the relatively uniform convergence 
is not topological on any infinite dimensional complete metrizable locally solid Lebesgue Riesz space.
Example \ref{sequential o} provides another non-trivial sequential convergence which is not topological.
We do not know any example of a topological space $(T,\tau)$ such that the sequential convergence  
${\text{\boldmath{$s\tau$}}}$ is not topological.  

The following two definitions are taken from \cite{DE}.

\begin{definition}\label{closed and compact}
Let $\mathbb{c}$ be a convergence on a set $S$. A subset $A$ of $S$  is called$:$
\begin{enumerate}
\item[$(a)$] \
{\em $\mathbb{c}$-closed} in $S$ if $A\ni a_\alpha\convc x\in S \Longrightarrow x\in A$$;$
\item[$(b)$] \
{\em $\mathbb{c}$-compact} if any net $a_\alpha$ in $A$ possesses a subnet $a_{\alpha_\beta}$ such that $a_{\alpha_\beta}\convc a$ for 
some $a\in A$.
\end{enumerate}
\end{definition}

\begin{definition}\label{cont mapping} 
Let $S_i$ be sets with convergences $\mathbb{c}_i$, where $i=1,2$.
A mapping $f:S_1\to S_2$ is called {\em $\mathbb{c}_1\mathbb{c}_2$-continuous} if, for every net $x_\alpha$ in $S_1$,
$$
   x_\alpha\convcone x \ \ \Longrightarrow \ \  f(x_\alpha)\convctwo f(x). 
$$
If $\mathbb{c}$ is a convergence on a set $S$ we call a $\mathbb{c}\mathbb{c}$-continuous
mapping $f:S\to S$ just {\em $\mathbb{c}$-continuous}.
\end{definition}

Obviously, for convergences $\mathbb{c}_1$ and $\mathbb{c}_2$ on a set $S$,
the identity mapping in $S$ is $\mathbb{c}_1\mathbb{c}_2$-continuous iff $\mathbb{c}_1\subseteq\mathbb{c}_2$.

\begin{definition}\label{just product convergence} 
Let $\{S_i\}_{i\in I}$ be a family of sets with convergences $\mathbb{c}_i$. 
The product convergence $\mathbb{b}=\prod_{i\in I}\mathbb{c}_i$ on $\prod_{i\in I}S_i$ is defined by
$$
  ((\bar{x}_\alpha)_{\alpha\in A},\bar{x})\in\mathbb{b} \ \ \text{if} \ \ 
  (\forall i\in I)(((x_\alpha)_i)_{\alpha\in A},x_i)\in\mathbb{c}_i,
$$
where $\bar{x}=(x_i)_i\in\prod_{i\in I}S_i$ and $\bar{x}_\alpha=(x_\alpha)_i\in\prod_{i\in I}S_i$ for all $\alpha\in A$.
\end{definition}

\begin{remark}\label{about product convergence}
The following observations are immediate$:$
\begin{enumerate}
\item[$(i)$] \ $\mathbb{b}$ satisfies conditions $(a)$, $(b)$, and $(c)$ of Definition $\ref{convergence}$$;$
\item[$(ii)$] \ the product convergence $\mathbb{b}$ is $T_1$ iff $\mathbb{c}_i\in T_1$ for all $i\in I$$;$
\item[$(iii)$] \ if $I$ is finite and $S_i$ is $\mathbb{c}_i$-compact for every  $i\in I$, then $\prod_{i\in I}S_i$ is $\mathbb{b}$-compact.
\end{enumerate}
\end{remark}

\begin{definition}\label{linear convergence}
A convergence $\mathbb{c}$ on a real vector space $X$ is called {\em linear}, whenever$:$
\begin{enumerate}
\item[$(a)$] \ 
the addition in $X$ is a $(\mathbb{c}\times\mathbb{c})\mathbb{c}$-continuous mapping $X\times X\to X$,
where $\mathbb{c}\times\mathbb{c}$ is the product convergence on $X\times X$$;$
\item[$(b)$] \ 
the scalar multiplication in $X$ is a $({\text{\boldmath{$\tau$}}}\times\mathbb{c})\mathbb{c}$-continuous mapping $\mathbb{R}\times{X}\to X$,
where $\mathbb{R}$ is equipped with the standard topological convergence ${\text{\boldmath{$\tau$}}}$ and ${\text{\boldmath{$\tau$}}}\times\mathbb{c}$ is the product 
convergence on $\mathbb{R}\times X$.
\end{enumerate}
We say that  the convergence $\mathbb{c}$ is  {\em additive} if the addition in $X$ is a $(\mathbb{c}\times\mathbb{c})\mathbb{c}$-continuous mapping $X\times{X}\to X$
and the additive inverse $x\to -x$ in $X$ is a $\mathbb{c}\mathbb{c}$-continuous mapping $X\to X$$;$ that is
\begin{enumerate}
\item[$(c)$] \ 
$x_\alpha \convc x$ and $y_\beta \convc y$ implies $(x_\alpha \pm y_\beta)_{(\alpha,\beta)} \convc x\pm y$.
\end{enumerate}
\end{definition}

Each linear convergence is additive since $(a)$ together with $(b)$ clearly imply $(c)$. Proposition \ref{o-convergence is full lattice and T_1} below
shows that not every additive convergence is linear. 
We do not know of any example of a linear convergence $\mathbb{c}$ on a vector space $X$ such that $\mathbb{sc}$
is not linear. As, in many cases, our approach to convergence is a repetition of well known reasoning, we often skip certain points 
and reproduce standard reasoning in details, where such a repetition seems to be convenient for the reader. 

\begin{lemma}\label{T_1 for linear conv}
An additive convergence $\mathbb{c}$ on a vector space $X$ is $T_1$ iff,  for any constant net $A\equiv a$ in $X$ and any $b\ne a$ in $X$, $(A,b)\notin\mathbb{c}$.

Furthermore, if $\mathbb{c}_1\subseteq\mathbb{c}_2$ for a convergence $\mathbb{c}_1$ and a $T_1$-convergence $\mathbb{c}_2$ on $X$ then $\mathbb{c}_1$ is also $T_1$.
\end{lemma}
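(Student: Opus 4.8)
The plan is to handle the two assertions in turn; both reduce to short arguments using only the axioms of Definition~\ref{convergence} together with the additivity property~$(c)$ of Definition~\ref{linear convergence}, so no topology or completeness of $X$ is needed. For the ``only if'' part of the equivalence I would argue directly: a constant net $A\equiv a$ satisfies $(A,a)\in\mathbb{c}$ by Definition~\ref{convergence}$(a)$, so if also $(A,b)\in\mathbb{c}$ then the $T_1$ property of $\mathbb{c}$ forces $a=b$; contraposing this gives exactly the stated condition that $(A,b)\notin\mathbb{c}$ whenever $b\ne a$. Note that this direction uses neither additivity nor linearity — it is the converse that carries the content.

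For the ``if'' part, assume the stated condition on constant nets and let $(x_\alpha)_{\alpha\in A}$ be a net with $(x_\alpha)\convc x$ and $(x_\alpha)\convc y$; the goal is $x=y$. Apply property~$(c)$ of Definition~\ref{linear convergence} to the two convergences $(x_\alpha)_{\alpha\in A}\convc x$ and $(x_\beta)_{\beta\in A}\convc y$ (the same net, reindexed) to obtain $(x_\alpha-x_\beta)_{(\alpha,\beta)\in A\times A}\convc x-y$. The key observation is that the diagonal map $\alpha\mapsto(\alpha,\alpha)$ from $A$ into $A\times A$ is monotone and cofinal, so $(x_\alpha-x_\alpha)_{\alpha\in A}$ is a subnet of $(x_\alpha-x_\beta)_{(\alpha,\beta)}$; by Definition~\ref{convergence}$(b)$ it therefore also converges to $x-y$. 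But $(x_\alpha-x_\alpha)_{\alpha\in A}$ is the constant net $\equiv 0$, so the hypothesis yields $x-y=0$, i.e.\ $x=y$, and hence $\mathbb{c}\in T_1$.

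The second assertion is purely formal: if $\mathbb{c}_1\subseteq\mathbb{c}_2$ and $\mathbb{c}_2\in T_1$, then for any net with $(x_\alpha)\convcone x$ and $(x_\alpha)\convcone y$ we have $(x_\alpha)\convctwo x$ and $(x_\alpha)\convctwo y$ by the inclusion, whence $x=y$ since $\mathbb{c}_2\in T_1$; thus $\mathbb{c}_1\in T_1$. The only point in the whole proof that is not completely automatic — and the one I would single out as the main obstacle, even though it is a standard fact about nets — is verifying that restriction to the diagonal of $A\times A$ genuinely yields a subnet in the sense invoked by Definition~\ref{convergence}$(b)$; once that is granted, the remainder is bookkeeping.
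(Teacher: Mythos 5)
Your proof is correct and follows essentially the same route as the paper's: necessity from Definition~\ref{convergence}$(a)$ plus $T_1$, sufficiency by applying additivity to the net against itself to get $(x_\alpha-x_\beta)_{(\alpha,\beta)}\convc x-y$ and then invoking the hypothesis on the constant net $0$, and the trivial inclusion argument for the second assertion. The only difference is that you make explicit the diagonal-subnet step (that $\alpha\mapsto(\alpha,\alpha)$ is monotone and cofinal in $A\times A$, so $(x_\alpha-x_\alpha)_\alpha$ is a genuine subnet), which the paper's one-line ``$0=(x_\alpha-x_\alpha)\convc(x-y)$'' leaves implicit; this is a correct and indeed welcome clarification rather than a deviation.
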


\begin{proof}
The necessity is obvious. For sufficiency, assume that $x_\alpha\convc x$ and $x_\alpha\convc y$ in $X$.  The additivity of  $\mathbb{c}$ implies
$0 = (x_\alpha - x_\alpha)  \convc (x -y)$. Since $0  \convc  0$,  the assumption implies $x-y = 0$, and hence $x = y$, as required.

The remaining  part of the lemma is trivial.
\end{proof}

\begin{definition}\label{c-Cauchy}
Let $\mathbb{c}$ be a convergence on a vector space $X$. A net $(a_\gamma)_{\gamma\in\Gamma}$ in $X$
is said to be {\em $\mathbb{c}$-Cauchy} if $(a_{\gamma_1}-a_{\gamma_2})_{(\gamma_1,\gamma_2)\in\Gamma\times\Gamma}\convc 0$.
The vector space $X$ is said to be {\em $\mathbb{c}$-complete} if, for every $\mathbb{c}$-Cauchy net $(a_\gamma)_{\gamma\in\Gamma}$ in $X$,
there exists $a\in X$ such that $(a_\gamma)_{\gamma\in\Gamma}\convc a$. 
\end{definition}

The next two convergences correspond to the {\em anti-discrete} and {\em discrete} topology.

\begin{example}\label{c_a and c_d} 
Let $X$ be a vector space.
\begin{enumerate} 
\item[$(a)$] Every net in $X$ $\mathbb{c_a}$-converges to every $x\in X$.
If $X\ne\{0\}$ then $\mathbb{c_a}$ is a linear non-$T_1$ convergence on $X$. 
\item[$(b)$] 
Eventually constant nets and only they are $\mathbb{c_d}$-convergent. The class 
$\mathbb{c_d}$ is a $T_1$ non-linear convergence on $X$.
\item[$(c)$] 
It is a straightforward fact that $X$ is both $\mathbb{c_a}$- and $\mathbb{c_d}$-complete.
\end{enumerate}
\end{example}

The present paper contains six further sections. Section 2 contains definitions of full lattice convergence, their elementary properties,
related examples, and Theorem \ref{fullification} on fullification of a linear convergence.
In Section 3, we present Theorem \ref{double nets}, that generalizes the Roberts -- Namioka theorem 
to the lattice convergence, and discuss several related results. Section 4 is devoted to the absolute $\mathbb{c}$-convergence and
the unbounded $\mathbb{c}$-convergence.
In Section 5, we introduce an $\mathbb{mc}$-convergence on a commutative $l$-algebra and study its general properties.
Section 6 is devoted to the $\mathbb{o}$-convergence on commutative $l$-algebras with especial emphasis at the case of 
universally complete $f$-algebras. In the final section, we consider the ${\text{\boldmath{$m\tau$}}}$-convergence on a commutative 
locally full $l$-algebra and the topology $\tau_m$ related to the convergence ${\text{\boldmath{$m\tau$}}}$.

We refer: for ordered spaces, Riesz spaces, and applications of Boolean-valued analysis to \cite{Vu,LZ,Go1,Go3,Za,AB,Ku,AT,GKK,AGG,AEEM2,AEEM1}; 
for $l$-algebras to \cite{Pag,Za,HP,Hu,BH,BR,BC,Ku,GKK}; for unbounded convergences to \cite{GTX,DEM2,DE,DEM3,Tay1,Tay2,AEEM2,AEEM1}.

\section{Full Lattice Convergence. Definitions and Examples.}

We begin with the following definition which is crucial for the present paper.

\begin{definition}\label{top+full+lattice convergence}
A convergence $\mathbb{c}$ on a vector space $X$ is called$:$
\begin{enumerate} 
\item[$(a)$] \ a {\em topological linear convergence}, if $\mathbb{c}$ is the convergence with respect to some linear topology on $X$$;$
\item[$(b)$] \ a {\em full convergence}, if $X$ is an ordered vector space, and $\mathbb{c}$ respects the order in $X$ in the following sense:
$$
  [(x_\alpha)_{\alpha\in A}\convc 0 \ \text{and} \ (\forall\alpha\in A) \ 0\le y_\alpha\le x_\alpha ]\Longrightarrow 
  (y_\alpha)_{\alpha\in A}\convc 0; 
  \eqno(1)
$$
\item[$(c)$] \ a  {\em lattice convergence}, if $X$ is a Riesz space, and $\mathbb{c}$ respects the lattice operations in $X$ in the following sense:
$$
  x_\alpha\convc x \Longrightarrow|x_\alpha|\convc|x|.
$$
\end{enumerate}
\end{definition}

\begin{remark}\label{linear topological convergence is T_1 iff topology is Hausdorff}
The following observations are trivial$:$
\begin{enumerate}
\item[$(i)$] \ if $(X,\tau)$ is a topological vector space, then the ${\text{\boldmath{$\tau$}}}$-convergence on $X$ is $T_1$ iff the topology $\tau$ is Hausdorff$;$
\item[$(ii)$] \ if $X$ is a Riesz space, then the convergence $\mathbb{c_a}$ on $X$ from Example $\ref{c_a and c_d}$ 
is  linear full lattice, whereas the convergence $\mathbb{c_d}$ is   full lattice which is non-linear when $X \ne\{0\}$.
\end{enumerate}
\end{remark}

\begin{proposition}\label{lattice in full}
Let $\mathbb{c}$ be an additive full convergence on a Riesz space $X$. The following conditions are equivalent$:$
\begin{enumerate}
\item[$(i)$] \ $\mathbb{c}$ is a lattice convergence$;$
\item[$(ii)$] for every net $x_\alpha$ in $X$,  $x_\alpha\convc 0\Longrightarrow|x_\alpha|\convc 0$.
\end{enumerate}
\end{proposition}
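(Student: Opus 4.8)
I would dispatch $(i)\Rightarrow(ii)$ in one line: it is precisely the defining property of a lattice convergence (Definition~\ref{top+full+lattice convergence}$(c)$) applied to a net converging to $x=0$, since $|0|=0$.

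For the substantial direction $(ii)\Rightarrow(i)$, the plan is to start from an arbitrary net $x_\alpha\convc x$ and reduce the desired $|x_\alpha|\convc|x|$ to a statement about $\mathbb{c}$-convergence to $0$. First, since $\mathbb{c}$ is additive and the constant net $\equiv -x$ is $\mathbb{c}$-convergent to $-x$, condition $(c)$ of Definition~\ref{linear convergence} gives $x_\alpha-x\convc 0$; then $(ii)$ yields $|x_\alpha-x|\convc 0$. Next I would invoke the elementary Riesz-space inequality $\bigl||x_\alpha|-|x|\bigr|\le|x_\alpha-x|$ together with fullness (property $(1)$ in Definition~\ref{top+full+lattice convergence}): from $0\le\bigl||x_\alpha|-|x|\bigr|\le|x_\alpha-x|$ and the right-hand net being $\mathbb{c}$-null, fullness forces $\bigl||x_\alpha|-|x|\bigr|\convc 0$.

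The one genuine point is then to upgrade $\bigl||x_\alpha|-|x|\bigr|\convc 0$ to $|x_\alpha|-|x|\convc 0$, after which adding back the constant net $\equiv|x|$ (additivity once more) finishes the proof. For this I would establish, as a small standalone observation using only that $\mathbb{c}$ is additive and full, the converse-type implication $|w_\alpha|\convc 0\Rightarrow w_\alpha\convc 0$: applying fullness to $0\le w_\alpha^{+}\le|w_\alpha|$ and to $0\le w_\alpha^{-}\le|w_\alpha|$ gives $w_\alpha^{+}\convc 0$ and $w_\alpha^{-}\convc 0$, whence $w_\alpha=w_\alpha^{+}-w_\alpha^{-}\convc 0$ by additivity. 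Taking $w_\alpha=|x_\alpha|-|x|$ completes the argument.

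The hard part is not any computation but recognizing that the decomposition $w=w^{+}-w^{-}$ lets fullness and additivity supply this converse implication $|w_\alpha|\convc 0\Rightarrow w_\alpha\convc 0$ for free, \emph{without} re-using hypothesis $(ii)$. A minor technical care point is the double-indexed formulation of additivity in Definition~\ref{linear convergence}$(c)$; this is harmless here because one of the two nets is constant and may be taken over a one-point directed set, so the double-indexed net reduces to the original one.
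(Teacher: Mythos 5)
Your proposal is correct and follows essentially the same route as the paper's proof: additivity to reduce to convergence to $0$, hypothesis $(ii)$ to get $|x_\alpha-x|\convc 0$, the inequality $\bigl||x_\alpha|-|x|\bigr|\le|x_\alpha-x|$ with fullness, and then the decomposition $w=w^{+}-w^{-}$ with fullness and additivity to recover $|x_\alpha|-|x|\convc 0$. No substantive differences.
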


\begin{proof}
$(i)\Longrightarrow(ii)$ \ is trivially true for an arbitrary convergence $\mathbb{c}$ on $X$. 

$(ii)\Longrightarrow(i)$ \ Let $x_\alpha\convc x$ in $X$. Then $x_\alpha-x\convc 0$ since $\mathbb{c}$ is additive,
and hence $|x_\alpha-x|\convc 0$, by $(ii)$. Then $||x_\alpha|-|x||\le|x_\alpha-x|$ implies $||x_\alpha|-|x||\convc 0$
because $\mathbb{c}$ is full. Since $0\le(|x_\alpha|-|x|)^{\pm}\le||x_\alpha|-|x||\convc 0$ and $\mathbb{c}$ is full, we have $(|x_\alpha|-|x|)^{\pm}\convc 0$.
The additivity of $\mathbb{c}$ implies $|x_\alpha|-|x|=(|x_\alpha|-|x|)^{+}-(|x_\alpha|-|x|)^{-}\convc 0$ and then $|x_\alpha|\convc|x|$.
\end{proof}

We do not know, whether or not Proposition \ref{lattice in full} holds true if we drop the assumption that an additive convergence $\mathbb{c}$ is full.

\begin{proposition}\label{inequalities}
Let $\mathbb{c}$ be an additive lattice convergence on a Riesz space $X$. The following conditions are equivalent$:$
\begin{enumerate}
\item[$(i)$] \ $\mathbb{c}\in T_1$$;$
\item[$(ii)$] $\mathbb{c}$ preserves inequalities, i.e. if 
$(x_\alpha)_{\alpha\in A}\convc x,  (y_\alpha)_{\alpha\in A}\convc y$, and
$x_\alpha\ge y_\alpha$ for all $\alpha\in A$, then $x\ge y$$;$
\item[$(iii)$] \ the positive cone $X_+$ is $\mathbb{c}$-closed in $X$.
\end{enumerate}
Furthermore, $(ii)\Longleftrightarrow(iii)$ for an arbitrary additive convergence $\mathbb{c}$.
\end{proposition}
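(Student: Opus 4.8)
The plan is to prove the two implication cycles separately: first the equivalence $(i)\Longleftrightarrow(ii)\Longleftrightarrow(iii)$ under the lattice hypothesis, and then the standalone claim $(ii)\Longleftrightarrow(iii)$ for arbitrary additive $\mathbb{c}$. Since the last assertion subsumes half the work, I would in fact establish $(ii)\Longleftrightarrow(iii)$ \emph{first}, using only additivity, and then close the triangle with $(iii)\Longrightarrow(i)\Longrightarrow(ii)$, the latter step being where the lattice property is genuinely used.

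For $(ii)\Longrightarrow(iii)$: suppose $X_+\ni a_\alpha\convc x$. Apply $(ii)$ with $y_\alpha\equiv 0$ (a constant net, so $0\convc 0$ by Definition \ref{convergence}$(a)$) and $x_\alpha=a_\alpha\ge 0=y_\alpha$; this gives $x\ge 0$, i.e. $x\in X_+$. For $(iii)\Longrightarrow(ii)$: given $x_\alpha\convc x$, $y_\alpha\convc y$ with $x_\alpha\ge y_\alpha$, additivity (Definition \ref{linear convergence}$(c)$) yields $(x_\alpha-y_\alpha)_{(\alpha,\alpha)}$... more carefully, additivity gives $x_\alpha-y_\alpha\convc x-y$ along the diagonal, or one can simply note $(x_\alpha-y_\alpha)_\alpha\convc x-y$ since both nets are indexed by the same $A$ and additivity applied to the pair $(x_\alpha)_\alpha,(-y_\alpha)_\alpha$ (using that $-y_\alpha\convc -y$) works; $x_\alpha-y_\alpha\in X_+$ for all $\alpha$, so by $(iii)$ the limit $x-y\in X_+$, i.e. $x\ge y$. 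Neither direction uses the lattice structure, only additivity, which is exactly the ``furthermore'' claim.

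For $(iii)\Longrightarrow(i)$: by Lemma \ref{T_1 for linear conv} it suffices to show that a constant net $A\equiv a$ cannot $\mathbb{c}$-converge to any $b\ne a$. Suppose $A\equiv a\convc b$. By additivity the constant net $\equiv 0$ converges to $a-b$, and the constant net $\equiv 0$ also converges to $b-a$; both $0$-valued nets lie in $X_+$, so $(iii)$ forces $a-b\in X_+$ and $b-a\in X_+$, whence $a-b=0$. Thus $\mathbb{c}\in T_1$. (One could also route through $(ii)\Longrightarrow(i)$ directly, but via $(iii)$ the argument is cleanest.)

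For $(i)\Longrightarrow(ii)$: this is the one step that needs $\mathbb{c}$ to be a lattice convergence (and here I expect the only real subtlety). Given $x_\alpha\convc x$, $y_\alpha\convc y$ with $x_\alpha\ge y_\alpha$, set $z_\alpha=x_\alpha-y_\alpha\convc x-y=:z$ by additivity, with $z_\alpha\ge 0$. Then $|z_\alpha|=z_\alpha\convc |z|$ because $\mathbb{c}$ is a lattice convergence, so $z_\alpha\convc |z|$ as well; but also $z_\alpha\convc z$, so by $\mathbb{c}\in T_1$ we get $z=|z|\ge 0$, i.e. $x\ge y$. I would then remark that $(i)\Rightarrow(ii)$ needs both additivity and the lattice property, explaining why the ``furthermore'' clause only asserts $(ii)\Leftrightarrow(iii)$. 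The main obstacle is nothing deep — it is just making sure additivity is invoked in the legitimate diagonal-net form of Definition \ref{linear convergence}$(c)$ when combining $(x_\alpha)$ and $(-y_\alpha)$ over the common index set $A$; everything else is a short chase.
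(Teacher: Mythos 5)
Your proof is correct, and it closes the equivalence cycle differently from the paper. The paper establishes the two equivalences $(i)\Leftrightarrow(ii)$ and $(ii)\Leftrightarrow(iii)$ separately, using the lattice property in both directions of the first; you instead prove $(ii)\Leftrightarrow(iii)$ from additivity alone and then finish via $(iii)\Rightarrow(i)\Rightarrow(ii)$. The genuinely different step is the one establishing $T_1$: the paper's $(ii)\Rightarrow(i)$ passes through $0\convc|a-b|$ and therefore needs $\mathbb{c}$ to be a lattice convergence, whereas your $(iii)\Rightarrow(i)$ applies $\mathbb{c}$-closedness of $X_+$ twice, to the constant zero nets converging to $a-b$ and to $b-a$, and uses only additivity. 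As a byproduct your argument shows that $(ii)$ (equivalently $(iii)$) implies $(i)$ for an \emph{arbitrary} additive convergence, which is slightly more than the proposition asserts and correctly isolates $(i)\Rightarrow(ii)$ as the sole place where the lattice hypothesis is needed. That last step is essentially the paper's argument in a cleaner form: the paper applies $T_1$ to the constant net $2(y_\alpha-x_\alpha)^+\equiv 0$ converging to $2(y-x)^+$, while you apply it to $z_\alpha=x_\alpha-y_\alpha$, which converges to both $z$ and $|z|$; both hinge on the same combination of additivity, the lattice property, and $T_1$. Your care in invoking additivity only in its product-net form and then passing to the diagonal subnet over the common index set is warranted, and is exactly the point the paper treats implicitly.
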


\begin{proof}
$(i)\Longrightarrow(ii)$ \ 
By Definition \ref{linear convergence}$(c)$,  $(y_\alpha-x_\alpha)_{\alpha\in A}\convc y-x$ and, by Definition \ref{top+full+lattice convergence}$(b)$,
$(|y_\alpha-x_\alpha|)_{\alpha\in A}\convc |y-x|$. Applying Definition \ref{linear convergence}$(c)$ once more, obtain
$$
   ((2\cdot 0)_{\alpha\in A}, \ 2(y-x)^+)=((2(y_\alpha-x_\alpha)^+)_{\alpha\in A}, \ 2(y-x)^+)=
$$
$$	
   ((|y_\alpha-x_\alpha|+y_\alpha-x_\alpha)_{\alpha\in A}, \ |y-x|+y-x))\in\mathbb{c}.
$$
Since $((2\cdot 0)_{\alpha\in A}, \ 0)\in\mathbb{c}$ by Definition \ref{convergence}$(a)$ 
and since $\mathbb{c}\in T_1$, it is true that $2(y-x)^+=0$, and hence $x\ge y$, as required.

$(ii)\Longrightarrow(i)$ \
Suppose on the contrary case, $\mathbb{c}\notin T_1$. There exist $a,b\in X$ such that $a\ne b$, yet $a\convc b$.
Hence $0=a-a\convc b-a$. Since $\mathbb{c}$ is a lattice convergence, $0\convc |a-b|$. 
Take $x_\alpha=y_\alpha\equiv 0$. Then $x_\alpha\convc x:=0$, $y_\alpha\convc y:=|a-b|\ne 0$.
Hence $x\not\ge y$, which violates $(ii)$. The obtained contradiction proves $\mathbb{c}\in T_1$. 

In the rest of the proof $\mathbb{c}$ is just an additive convergence on $X$. 

$(ii)\Longrightarrow(iii)$\ 
It follows from $0=:y_\alpha\le x_\alpha\convc x \Rightarrow 0\le x$. 

$(iii)\Longrightarrow(ii)$\  
Let $(x_\alpha)_{\alpha\in A}\convc x,  (y_\alpha)_{\alpha\in A}\convc y$, and $x_\alpha\ge y_\alpha$ for all $\alpha\in A$.
Then $(x_\alpha-y_\alpha)_{\alpha\in A}\convc x-y$ because $\mathbb{c}$ is additive. Since  $X_+$ is $\mathbb{c}$-closed in $X$ and 
$x_\alpha-y_\alpha\in X_+$ for all $\alpha\in A$, then $x-y\in X_+$ and hence $x\ge y$.
\end{proof}

\begin{example}\label{piece wise polynomial} 
Let $X=PP[0,1]$ be the Riesz space of continuous $\mathbb{R}$-valued functions on $[0,1]$
which are piecewise polynomial with finitely many pieces $($see {\em \cite[Ex.5.1(i)]{Hu}}$)$. We define 
$$
  \|f\|_{\mathbb{1}}:=\|f\|_{\infty}+\sup\bigg\{\bigg|\frac{df}{dt}(s)\bigg|: s\in [0,1] \ \text{and} \ \frac{df}{dt}(s)\ \text{exists} \ \bigg\}. \ \ \ (f\in X)
  \eqno(2)
$$ 
Let $\mathbb{c_1}$ be the convergence on $X$ with respect to the norm $\|\cdot\|_{\mathbb{1}}$ on $X$ given by $(2)$.
Clearly, $\mathbb{c_1}$ is a topological linear $T_1$ convergence on $X$. However $\mathbb{c_1}$ is neither lattice nor full on $X$. 
\begin{enumerate}
\item[$(a)$] \ 
The convergence $\mathbb{c_1}$ is not lattice on $X$. To see this, take $f_n = f-\frac{1}{n}$, $n\in \mathbb{N}$,  where
$$
    f(t) = \left\{ \begin{array}{lcl}
    0 & \text{if} & 0\leq t < \frac{1}{2} \\
    2t-1 & \text{if} & \frac{1}{2}\leq t \leq 1
    \end{array} \right. .
$$
Then $\| f_n - f\|_{\mathbb{1}} = \| f_n - f\|_{\infty} = \frac{1}{n} \to 0$,  but $\| |f_n| - |f|\|_{\mathbb{1}} = \frac{1}{n} + 4 \not\to 0$. 
\item[$(b)$] \ 
The convergence $\mathbb{c_1}$ is not full on $X$.  In order to show this, let
$y_n\in X$ be linear on every interval $[\frac{k-1}{2n},\frac{k}{2n}]$ for $k=1,\dotsc , 2n$, $y_n(\frac{k-1}{2n})=0$ for odd $k$,
and $y_n(\frac{k-1}{2n})=\frac{1}{n}$ for even $k$. Let $x_n(t)\equiv\frac{1}{n}$ on $[0,1]$. Then $0\le y_n\le x_n\convcone 0$,
however $\|y_n\|_{\mathbb{1}}= \frac{1}{n} + 2  \not\to 0$. 
\end{enumerate}
\noindent
It is less trivial to construct a linear convergence which is lattice but not full. In order to do this, take the vector subspace 
$Y$ of $X$ consisting of continuous on $[0,1]$ functions which are affine on $[\frac{1}{k+1},\frac{1}{k}]$ for each $k\in\mathbb{N}$.
Then $Y$ is a Riesz space with respect to the pointwise order. It is easy to see that the absolute value $|f|_Y$ of $f\in Y$ is given by
$$
    |f|_Y(t)=\bigg|f\bigg(\frac{1}{k+1}\bigg)\bigg|+\frac{(|f(\frac{1}{k})|-|f(\frac{1}{k+1})|)(t-\frac{1}{k+1})}{\frac{1}{k}-\frac{1}{k+1}} \ \ \ \ \ \ \bigg(t\in\bigg[\frac{1}{k+1},\frac{1}{k}\bigg]\bigg),
$$ 
where $k\in\mathbb{N}$, and $|f|_Y(0)=|f(0)|$. Clearly $|f|_Y\ne |f|$ in general, and hence the Riesz space $Y$ is not a Riesz subspace of $X$.
Furthermore, $\mathbb{c_1}$ is a topological linear $T_1$ convergence on $Y$.
\begin{enumerate}
\item[$(c)$] \  
The convergence $\mathbb{c_1}$ is a lattice convergence on the Riesz space $Y$. To see this, take $f_n, f\in Y$  such that $f_n\convcone f$ in  $Y$.
In other words:
$$
    \| f_n - f\|_{\mathbb{1}}=  \|f_n-f\|_{\infty}+
$$
$$
    \sup\bigg\{\bigg|\frac{d(f_n-f)}{dt}(s)\bigg|: s\in [0,1] \ \text{and} \ \frac{d(f_n-f)}{dt}(s)\ \text{exists} \ \bigg\}=
$$
$$
    \sup_{k\in\mathbb{N}}\bigg|f_n\bigg(\frac{1}{k}\bigg)-f\bigg(\frac{1}{k}\bigg)\bigg|+\sup_{k\in\mathbb{N}}\bigg\{\frac{|(f_n-f)(\frac{1}{k})-(f_n-f)(\frac{1}{k+1})|}{\frac{1}{k}-\frac{1}{k+1}} \bigg\}\to 0.
$$
So we need to show the same for modulus, namely $|f_n|\convcone |f|$ in  $Y$. 
$$
    \||f_n|_Y-|f|_Y \|_{\mathbb{1}}= \||f_n|_Y-|f|_Y \|_{\infty} + 
$$
$$
     \sup\bigg\{\bigg|\frac{d(|f_n|_Y-|f|_Y)}{dt}(s)\bigg|: s\in [0,1] \ \text{and} \ \frac{d(|f_n|_Y-|f|_Y)}{dt}(s)\ \text{exists} \ \bigg\}.
$$
By our assumption, all the functions are affine in each interval $\left[\frac{1}{k+1},\frac{1}{k}\right]$ for each $k\in\mathbb{N}$.
Therefore, we can restrict ourselves to just one interval, say $[a,b]$ with $0 < a < b \le 1$. 
In view of the definition of $| \cdot |_Y$, we can omit the condition of existence of $|f_n|_Y'$ and $|f|_Y'$. 
The following inequality $\| |f_n|'_Y - |f|'_Y\|_{\infty} \le 2  \| f'_n - f'\|_{\infty}$, whose proof is left to the reader, gives $|f_n|\convcone |f|$. 
\item[$(d)$] \  
The convergence $\mathbb{c_1}$ is not full on $Y$. To see this, let $f_n, g\in Y$ be such that
$$
    f_n(t) = \left\{ \begin{array}{lcl}
    \frac{1}{n(n+1)} & \text{if} & 0\le t \le\frac{1}{n+1} \\
    \text{affine} \      & \text{if} & \frac{1}{n+1}\le t \le \frac{1}{n} \\
    0                         & \text{if}  & \frac{1}{n}\leq t \leq 1
    \end{array} \right.,
$$
and $g_n(t)=\frac{1}{n}$ for all $t\in [0,1]$. Then $0\le f_n\le g_n\convcone 0$,
however $\|f_n\|_{\mathbb{1}}= \frac{1}{n(n+1)} + 1  \not\to 0$.
\end{enumerate}
\end{example}

Up to now, we have just one modification $\mathbb{c}\to\mathbb{sc}$ of convergence on a Riesz space $X$ or, in different terminology, a functor from the category of all  
convergences on $X$ into itself. The following definition presents another modification that transforms linear convergences to full lattice convergences.

\begin{definition}\label{lattice to full lattice}
Let $\mathbb{c}$ be a convergence on a Riesz space $X$. The {\em fullification} $\mathbb{fc}$ of $\mathbb{c}$ is defined by
$$
  ((y_\alpha)_{\alpha\in A}, y)\in\mathbb{fc} \ \text{if} \ \ (\exists ((x_\alpha)_{\alpha\in A}, 0)\in\mathbb{c})(\forall\alpha\in A)  |y_\alpha-y|\le x_\alpha.
$$
\end{definition}

The following theorem justifies the use of the term ``fullification" $\mathbb{fc}$ of a linear convergence $\mathbb{c}$ on a Riesz space.

\begin{theorem}\label{fullification}
Let $\mathbb{c}$ be a linear convergence on a Riesz space $X$. Then the class $\mathbb{fc}$ is a linear full lattice convergence on $X$.
Furthermore, if the convergence $\mathbb{c}$ is  lattice, then $\mathbb{c}\subseteq\mathbb{fc}$.
\end{theorem}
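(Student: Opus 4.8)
The plan is to verify, in order, that $\mathbb{fc}$ is a convergence, that it is additive and then linear, that it is full, that it is lattice, and finally that $\mathbb{c}\subseteq\mathbb{fc}$ when $\mathbb{c}$ is lattice. Throughout I would use the observation that a witness net $(x_\alpha)_{\alpha\in A}$ for $(y_\alpha)_{\alpha\in A}\convfc y$ automatically satisfies $x_\alpha\ge|y_\alpha-y|\ge 0$; this positivity is needed in the ``full'' step. For the three axioms of Definition \ref{convergence}: for a constant net $B\equiv b$ take the witness $x_\alpha\equiv 0$, which $\mathbb{c}$-converges to $0$ by Definition \ref{convergence}$(a)$ for $\mathbb{c}$; closure under subnets follows by passing to the corresponding subnet of a witness and invoking Definition \ref{convergence}$(b)$ for $\mathbb{c}$; and if a tail of $(y_\alpha)$ is $\mathbb{fc}$-convergent to $y$ with a witness defined on that tail, extend the witness to all of $A$ by setting it equal to $|y_\alpha-y|$ off the tail — the extended net $\mathbb{c}$-converges to $0$ by Definition \ref{convergence}$(c)$ for $\mathbb{c}$ and dominates $|y_\alpha-y|$ everywhere.

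For additivity, if $(x_\alpha)_{\alpha\in A}\convfc x$ and $(y_\beta)_{\beta\in B}\convfc y$ with witnesses $(p_\alpha)\convc 0$ and $(q_\beta)\convc 0$, the triangle inequality gives $|(x_\alpha\pm y_\beta)-(x\pm y)|\le p_\alpha+q_\beta$ on $A\times B$, and $(p_\alpha+q_\beta)_{(\alpha,\beta)}\convc 0$ since $\mathbb{c}$ is additive; the additive inverse uses the same witness. To obtain the product-convergence form of Definition \ref{linear convergence}$(a)$, I would use the standard fact that the diagonal $\gamma\mapsto(\gamma,\gamma)$ is a subnet map, so the diagonal of $(u_{\gamma_1}+v_{\gamma_2})$ is an $\mathbb{fc}$-convergent subnet by Definition \ref{convergence}$(b)$ for $\mathbb{fc}$. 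For continuity of scalar multiplication, suppose $\lambda_\gamma\to\lambda$ in $\mathbb{R}$ and $v_\gamma\convfc v$ with witness $(p_\gamma)\convc 0$. From $\lambda_\gamma v_\gamma-\lambda v=\lambda_\gamma(v_\gamma-v)+(\lambda_\gamma-\lambda)v$ one gets $|\lambda_\gamma v_\gamma-\lambda v|\le|\lambda_\gamma|p_\gamma+|\lambda_\gamma-\lambda||v|$; choosing $\gamma_0$ with $|\lambda_\gamma|\le M:=|\lambda|+1$ for $\gamma\ge\gamma_0$, the right-hand side is dominated on that tail by $Mp_\gamma+|\lambda_\gamma-\lambda||v|$, which $\mathbb{c}$-converges to $0$ by scalar-multiplication continuity and additivity of $\mathbb{c}$. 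Hence $(\lambda_\gamma v_\gamma)$ is $\mathbb{fc}$-convergent to $\lambda v$ on a tail, and axiom $(c)$ for $\mathbb{fc}$ finishes it.

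Fullness is immediate: if $(x_\alpha)\convfc 0$ with witness $(z_\alpha)\convc 0$ and $0\le y_\alpha\le x_\alpha$, then $x_\alpha\ge 0$ gives $x_\alpha=|x_\alpha-0|\le z_\alpha$, so $|y_\alpha-0|=y_\alpha\le z_\alpha$ and $(z_\alpha)$ witnesses $(y_\alpha)\convfc 0$. Since $\mathbb{fc}$ is additive and full, Proposition \ref{lattice in full} reduces the lattice property to showing $y_\alpha\convfc 0\Rightarrow|y_\alpha|\convfc 0$; but any witness $(z_\alpha)\convc 0$ for $y_\alpha\convfc 0$ dominates $|y_\alpha|=\big||y_\alpha|-0\big|$, so it also witnesses $|y_\alpha|\convfc 0$. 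Finally, if $\mathbb{c}$ is lattice and $y_\alpha\convc y$, then $y_\alpha-y\convc 0$ by additivity and $|y_\alpha-y|\convc 0$ since $\mathbb{c}$ is lattice (Definition \ref{top+full+lattice convergence}$(c)$), so $x_\alpha:=|y_\alpha-y|$ is a witness for $y_\alpha\convfc y$; thus $\mathbb{c}\subseteq\mathbb{fc}$.

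The only delicate points are bookkeeping ones: matching the ``same index set'' product-convergence formulation of linearity with the ``$A\times B$'' formulation of additivity via the diagonal-subnet fact, and the tail truncation needed to control the possibly unbounded scalars $|\lambda_\gamma|$ in the scalar-multiplication step. Everything else is a routine application of the triangle inequality together with the convergence axioms for $\mathbb{c}$.
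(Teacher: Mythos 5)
Your proof is correct and follows essentially the same route as the paper: witness nets plus the triangle inequality for additivity, the tail truncation $|\lambda_\gamma|\le|\lambda|+1$ for scalar multiplication, and the domination $|y_\alpha-0|\le x_\alpha\le z_\alpha$ for fullness. The only (harmless) deviations are that you spell out the verification of the convergence axioms that the paper omits as straightforward, and you derive the lattice property via Proposition \ref{lattice in full} rather than the paper's direct inequality $\big||x_\alpha|-|x|\big|\le|x_\alpha-x|\le z_\alpha$; both are valid.
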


\begin{proof}
We omit the straightforward verification that $\mathbb{fc}$ satisfies conditions $(a)$, $(b)$, and $(c)$ of Definition \ref{convergence} and therefore is a convergence on $X$.

In order to show that $\mathbb{fc}$ is linear, let $(x_\alpha)_{\alpha\in A}\convfc x$, $(y_\beta)_{\beta\in B}\convfc y$ in $X$, and
$(r_\gamma)_{\gamma\in\Gamma}\to r\in\mathbb{R}$ in the standard topology on $\mathbb{R}$.
By Definition \ref{lattice to full lattice}, there exist nets $(z_\alpha)_{\alpha\in A}\convc 0$ and $(w_\beta)_{\beta\in B}\convc 0$ in $X$, such that 
$$
    |x_\alpha-x|\le z_\alpha \ \text{and} \ |y_\beta-y|\le w_\beta \ \ \ \ (\forall \alpha\in A,\beta\in B). 
$$
It follows that
$$
    |(x_\alpha+y_\beta)-(x+y)|\le|x_\alpha-x|+|y_\beta-y|\le z_\alpha+w_\beta \ \ \ \ (\forall \alpha\in A,\beta\in B). 
$$
Since the convergence $\mathbb{c}$ is linear, $(z_\alpha+w_\beta)_{(\alpha,\beta)\in A\times B}\convc 0$ and hence $(x_\alpha+y_\beta)_{(\alpha,\beta)\in A\times B}\convfc x+y$
by Definition \ref{lattice to full lattice}. So, the convergence $\mathbb{fc}$ satisfies Definition \ref{linear convergence}$(a)$.
For verifying Definition \ref{linear convergence}$(b)$, take $\alpha_0\in A$ and let $\gamma_0\in\Gamma$ be such that $|r_\gamma|\le|r|+1$ for all $\gamma\ge\gamma_0$.
Observe that
$$
   |r_\gamma x_\alpha-rx|\le|r_\gamma||x_\alpha-x|+|r_\gamma x -rx|\le
$$
$$
   (|r|+2)(|x_\alpha-x|)+|r_\gamma -r||x|\le(|r|+2)z_\alpha+|r_\gamma -r||x| 
  \eqno(3)
$$
for all $\alpha\ge\alpha_0$ in $A$ and for all $\gamma\ge\gamma_0$ in $\Gamma$. Since $\mathbb{c}$ is linear,
$$
  ((|r|+2)z_\alpha+|r_\gamma -r||x|)_{(\gamma,\alpha)\ge(\gamma_0,\alpha_0)\in \Gamma\times A}\convc 0.
  \eqno(4)
$$
By Definition \ref{lattice to full lattice}, $(3)$ and $(4)$ imply $(r_\gamma x_\alpha)_{(\gamma,\alpha)\ge(\gamma_0,\alpha_0)\in \Gamma\times A}\convfc rx$ and,
since $\mathbb{fc}$ is a convergence on $X$, there holds $(r_\gamma x_\alpha)_{(\gamma,\alpha)\in\Gamma\times A}\convfc rx$ by Definition \ref{convergence}$(c)$.
We have shown that $\mathbb{fc}$ is a linear convergence on $X$.

Now, let $(x_\alpha)_{\alpha\in A}\convfc 0$ and $0\le y_\alpha\le x_\alpha$ for all $\alpha\in A$.
By Definition \ref{lattice to full lattice}, there exists a net $(z_\alpha)_{\alpha\in A}\convc 0$ such that 
$|y_\alpha-0|=y_\alpha\le x_\alpha=|x_\alpha-0|\le z_\alpha $ for all $\alpha\in A$, that shows $(y_\alpha)_{\alpha\in A}\convfc 0$.
Thus, we have shown that $\mathbb{fc}$ is full.

Finally, let $(x_\alpha)_{\alpha\in A}\convfc x$. Take a net $(z_\alpha)_{\alpha\in A}\convc 0$ such that $|x_\alpha-x|\le z_\alpha$ for all $\alpha\in A$. 
In view of Definition \ref{lattice to full lattice}, $||x_\alpha|-|x||\le|x_\alpha-x|\le z_\alpha\convc 0$ implies $(|x_\alpha|)_{\alpha\in A}\convfc|x|$
and therefore $\mathbb{fc}$ is lattice.

It remains to prove that $\mathbb{c}\subseteq\mathbb{fc}$ assuming $\mathbb{c}$ to be a lattice convergence. Let $(y_\alpha)_{\alpha\in A}\convc y$. 
Then $(y_\alpha-y)_{\alpha\in A}\convc 0$ since $\mathbb{c}$ is linear. Hence $(|y_\alpha-y|)_{\alpha\in A}\convc 0$ since $\mathbb{c}$ is lattice. 
Letting $x_\alpha:=|y_\alpha-y|$ for all $\alpha\in A$ and applying Definition \ref{lattice to full lattice}, we obtain $(y_\alpha)_{\alpha\in A}\convfc y$.
\end{proof}

\begin{corollary}\label{fc=c for full lattice c}
Let $\mathbb{c}$ be a linear full lattice convergence on a Riesz space $X$. Then $\mathbb{fc}=\mathbb{c}$. 
\end{corollary}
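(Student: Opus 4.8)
The statement asserts both inclusions $\mathbb{c}\subseteq\mathbb{fc}$ and $\mathbb{fc}\subseteq\mathbb{c}$. The first is free: since $\mathbb{c}$ is in particular a linear lattice convergence, Theorem \ref{fullification} already gives $\mathbb{c}\subseteq\mathbb{fc}$. So the whole content of the corollary is the reverse inclusion, and here the full (rather than merely lattice) hypothesis on $\mathbb{c}$ is what is used.

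For $\mathbb{fc}\subseteq\mathbb{c}$, I would start from an arbitrary pair $((y_\alpha)_{\alpha\in A},y)\in\mathbb{fc}$ and unwind Definition \ref{lattice to full lattice}: there is a net $(x_\alpha)_{\alpha\in A}\convc 0$ with $|y_\alpha-y|\le x_\alpha$ for every $\alpha\in A$. The key observation is that the positive and negative parts satisfy $0\le(y_\alpha-y)^{\pm}\le|y_\alpha-y|\le x_\alpha$, so two applications of the fullness property $(1)$ of Definition \ref{top+full+lattice convergence}$(b)$ yield $(y_\alpha-y)^{+}\convc 0$ and $(y_\alpha-y)^{-}\convc 0$. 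Then additivity of $\mathbb{c}$ (which holds since $\mathbb{c}$ is linear) gives $y_\alpha-y=(y_\alpha-y)^{+}-(y_\alpha-y)^{-}\convc 0$, and adding back the constant net $y$ (again by additivity, using Definition \ref{convergence}$(a)$ for the constant net) gives $y_\alpha\convc y$, i.e. $((y_\alpha)_{\alpha\in A},y)\in\mathbb{c}$.

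Combining the two inclusions yields $\mathbb{fc}=\mathbb{c}$. There is really no hard step here; the only thing worth flagging is that one should dominate $(y_\alpha-y)^{\pm}$ directly by $x_\alpha$ and invoke fullness, rather than first trying to pass through $|y_\alpha-y|\convc 0$ and then reconstruct $y_\alpha-y$ — the direct route avoids needing the lattice property of $\mathbb{c}$ at all in this direction.
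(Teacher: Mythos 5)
Your proposal is correct and follows essentially the same route as the paper: both derive $\mathbb{c}\subseteq\mathbb{fc}$ from Theorem \ref{fullification}, and for the reverse inclusion both dominate $(y_\alpha-y)^{\pm}$ by the controlling net, invoke fullness twice, and recombine by additivity/linearity. No gaps.
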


\begin{proof}
Since the convergence $\mathbb{c}$ is lattice, Theorem \ref{fullification} implies $\mathbb{c}\subseteq\mathbb{fc}$. Let now $(x_\alpha)_{\alpha\in A}\convfc x$.
By Definition \ref{lattice to full lattice}, there exists a net $(z_\alpha)_{\alpha\in A}\convc 0$ such that 
$$
   0\le(x_\alpha-x)^{\pm}\le|x_\alpha-x|\le z_\alpha \ \ \ \ (\forall \alpha\in A).
$$
Since $\mathbb{c}$ is full, $(x_\alpha-x)^{\pm}\convc 0$ and hence $x_\alpha-x=(x_\alpha-x)^{+}-(x_\alpha-x)^{-}\convc 0$.
Since $\mathbb{c}$ is linear, $(x_\alpha)_{\alpha\in A}\convc x$. Hence $\mathbb{fc}\subseteq\mathbb{c}$, which completes the proof.
\end{proof}

\begin{corollary}\label{ffc=fc}
Let $\mathbb{c}$ be a linear convergence on a Riesz space $X$. Then $\mathbb{ffc}=\mathbb{fc}$.
\end{corollary}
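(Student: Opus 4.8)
The plan is to derive this purely from the two preceding results, applying Theorem \ref{fullification} once and Corollary \ref{fc=c for full lattice c} once. First I would invoke Theorem \ref{fullification} with the given linear convergence $\mathbb{c}$: it tells us that $\mathbb{fc}$ is itself a linear full lattice convergence on $X$. In particular $\mathbb{fc}$ satisfies the hypotheses of Corollary \ref{fc=c for full lattice c}.

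Next I would apply Corollary \ref{fc=c for full lattice c} to the convergence $\mathbb{fc}$ in the role of ``$\mathbb{c}$''. Since $\mathbb{fc}$ is linear, full, and lattice, that corollary yields $\mathbb{f}(\mathbb{fc})=\mathbb{fc}$, i.e. $\mathbb{ffc}=\mathbb{fc}$, which is exactly the assertion. There is essentially no obstacle here: the only point worth spelling out is that the fullification operation $\mathbb{c}\mapsto\mathbb{fc}$ is well defined on \emph{every} convergence on $X$ (Definition \ref{lattice to full lattice} makes no linearity assumption), so forming $\mathbb{f}(\mathbb{fc})$ is legitimate, and Theorem \ref{fullification} guarantees that when we start from a linear $\mathbb{c}$ the intermediate object $\mathbb{fc}$ lands in the class to which Corollary \ref{fc=c for full lattice c} applies. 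Thus the corollary is an immediate combination of Theorem \ref{fullification} and Corollary \ref{fc=c for full lattice c}, expressing that $\mathbb{f}$ is idempotent on linear convergences.
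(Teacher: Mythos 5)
Your proposal is correct and is exactly the argument the paper gives: apply Theorem \ref{fullification} to conclude that $\mathbb{fc}$ is a linear full lattice convergence, then apply Corollary \ref{fc=c for full lattice c} to $\mathbb{fc}$. The extra remark that fullification is defined for arbitrary convergences is a harmless clarification.
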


\begin{proof}
By Theorem \ref{fullification}, $\mathbb{fc}$ is a linear full lattice convergence. Application of Corollary~\ref{fc=c for full lattice c}  to $\mathbb{fc}$ completes the proof.
\end{proof}

In Sections 4, 5, and 6 we introduce and investigate several important modifications of a linear full convergence on a Riesz space $X$ 
which produce new linear full lattice convergences.

The following useful fact is well known (see, e.g., \cite[Thm.2.23]{AT}). For convenience, we include its proof in our terminology.

\begin{lemma}\label{full topology}
Let $\tau$ be a linear topology on an ordered vector space $X$. Then the following conditions are equivalent$:$
\begin{enumerate}
\item[$(i)$] \ the topology $\tau$ is locally full$;$
\item[$(ii)$] \ the ${\text{\boldmath{$\tau$}}}$-convergence is linear and full.
\end{enumerate}
\end{lemma}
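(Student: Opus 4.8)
\textbf{Proof plan for Lemma \ref{full topology}.}

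The plan is to prove both implications using the correspondence between local bases of the topology and the $\boldsymbol{\tau}$-convergence. Recall that a linear topology $\tau$ is \emph{locally full} (locally solid in the ordered-vector-space sense) precisely when $0$ has a neighborhood base consisting of \emph{full} sets, i.e. sets $V$ with the property that $0\le y\le x\in V$ implies $y\in V$. The key translation I will use throughout is: a net $x_\alpha$ $\boldsymbol{\tau}$-converges to $0$ iff for every $\tau$-neighborhood $U$ of $0$ there is $\alpha_0$ with $x_\alpha\in U$ for all $\alpha\ge\alpha_0$.

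For $(i)\Longrightarrow(ii)$: linearity of the $\boldsymbol{\tau}$-convergence is automatic since $\tau$ is a linear topology (addition and scalar multiplication are $\tau$-continuous, hence $\boldsymbol{\tau}$-continuous in the sense of Definition \ref{cont mapping}, which is exactly Definition \ref{linear convergence}). For fullness, suppose $x_\alpha\convtau 0$ and $0\le y_\alpha\le x_\alpha$ for all $\alpha$. Let $U$ be any $\tau$-neighborhood of $0$; by $(i)$ choose a full neighborhood $V$ of $0$ with $V\subseteq U$. Since $x_\alpha\convtau 0$, there is $\alpha_0$ with $x_\alpha\in V$ for $\alpha\ge\alpha_0$; fullness of $V$ forces $y_\alpha\in V\subseteq U$ for $\alpha\ge\alpha_0$. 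As $U$ was arbitrary, $y_\alpha\convtau 0$, which is condition $(1)$ of Definition \ref{top+full+lattice convergence}$(b)$.

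For $(ii)\Longrightarrow(i)$: this is the direction that needs a genuine construction, and it is the main obstacle. Given a $\tau$-neighborhood $U$ of $0$, I want to produce a full neighborhood $V$ of $0$ with $V\subseteq U$; the natural candidate is the \emph{full hull} $[-w,w]+\ldots$—more precisely, I will show that $0$ has a base of full neighborhoods by an indirect argument. Suppose not: then there is a $\tau$-neighborhood $U$ of $0$ such that no full neighborhood of $0$ is contained in $U$. Equivalently, for every $\tau$-neighborhood $W$ of $0$, the full hull of $W$ (the union of all order intervals $[0,x]$ with $x\in W\cap X_+$, together with appropriate translates—here I use the order structure to saturate $W$) is not contained in $U$; hence for each $W$ we can pick $x_W\in W\cap X_+$ and $y_W$ with $0\le y_W\le x_W$ but $y_W\notin U$. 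Directing the neighborhoods $W$ of $0$ by reverse inclusion gives a net $(x_W)$ with $x_W\convtau 0$ (since eventually $x_W\in W'$ for any fixed $W'$) and $0\le y_W\le x_W$, yet $y_W\notin U$ for all $W$, so $y_W\not\convtau 0$. This contradicts the fullness assumed in $(ii)$. Therefore $0$ does have a base of full neighborhoods and $\tau$ is locally full.

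The subtlety I expect to handle carefully is the passage from ``no full neighborhood sits inside $U$'' to the existence of the violating net: I must make sure the chosen $x_W$ can be taken positive (replace $W$ by a smaller balanced neighborhood and use that $W$ being a neighborhood of $0$ in a linear topology meets $X_+$ in a neighborhood-like set, or work directly with the full hull $F(W)=\{y: 0\le y\le x \text{ for some } x\in W\cap X_+\}\cup(-F(W))$ which is absorbing and circled, hence a neighborhood candidate once we know its non-containment in $U$). Once the net is in hand, the argument closes immediately by invoking condition $(1)$. I will also remark that this matches the classical statement \cite[Thm.2.23]{AT}; the only new feature is phrasing it in the convergence language of Definition \ref{linear convergence} and Definition \ref{top+full+lattice convergence}.
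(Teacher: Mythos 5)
Your overall strategy coincides with the paper's: the forward direction is the routine base-of-full-neighborhoods argument, and the reverse direction is a proof by contradiction that manufactures a net violating condition $(1)$ of Definition \ref{top+full+lattice convergence}$(b)$. However, there is a genuine gap, and it originates in your opening ``recall'': a \emph{full} set in an ordered vector space is one satisfying $x\le z\le y$ with $x,y\in V$ $\Rightarrow$ $z\in V$ (this is the notion used in \cite[Thm.2.23]{AT} and throughout the paper, e.g.\ in the proof of Theorem \ref{Roberts -- Namioka Theorem}), not merely $0\le y\le x\in V\Rightarrow y\in V$. Consequently, the negation of local fullness only hands you, for each $W$, vectors $x_W,y_W\in W$ and $z_W\notin U$ with $x_W\le z_W\le y_W$; it does \emph{not} let you take the lower endpoint to be $0$. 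Your contradiction argument therefore establishes the equivalence of $(ii)$ with a formally weaker ``$0$-based'' local fullness. That weaker property is in fact equivalent to local fullness for linear topologies, but proving this requires an extra argument (roughly: take $V'$ with your $0$-based property, $W$ balanced with $W+W\subseteq V'$, and translate intervals by their lower endpoint), which you do not supply. The paper absorbs exactly this difficulty inside the net argument: from $x_W\le z_W\le y_W$ it passes to $0\le z_W-x_W\le y_W-x_W\convtau 0$, applies fullness of the convergence to get $z_W-x_W\convtau 0$, and then uses linearity to recover $z_W=(z_W-x_W)+x_W\convtau 0$. That translation step is the missing idea in your write-up.

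Two smaller points. First, your parenthetical claim that $F(W)=\{y:0\le y\le x \text{ for some } x\in W\cap X_+\}\cup(-F(W))$ is absorbing is false: $F(W)$ is contained in $X_+\cup(-X_+)$, so it cannot absorb any vector that is not comparable with $0$ (e.g.\ $(1,-1)$ in $\mathbb{R}^2$); the correct object is the full hull $[W]=\bigcup_{x,y\in W}[x,y]$, which contains $W$ and is therefore automatically a neighborhood of $0$. Second, in the forward direction you should note explicitly that $0\in V$ for any neighborhood $V$ of zero, so that $0\le y_\alpha\le x_\alpha\in V$ places $y_\alpha$ in the order interval $[0,x_\alpha]$ with \emph{both} endpoints in $V$; with the correct definition of fullness this is what yields $y_\alpha\in V$. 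With these repairs your proof becomes essentially the paper's.
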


\begin{proof}
$(i)\Longrightarrow(ii)$ \ is trivial.

$(ii)\Longrightarrow(i)$ \ 
Denote by $\tau(0)$ the collection of all  $\tau$-neighborhoods of zero. By way of contradiction, assume that $(ii)$ is true, 
and $\tau$ is not locally full. Then there exists $U\in\tau(0)$ such that the {\em full hull} $[V]:=\bigcup\limits_{x,y\in V}[x,y]$ 
of every $V\in\tau(0)$ is not in $U$. So, for each $V\in\tau(0)$, we can pick vectors $x_V,y_V\in V$ and $z_V\in X\setminus V$ satisfying $x_V\le z_V\le y_V$.
Consider $\tau(0)$ as a directed set under the ordering $V_1\le V_2$ whenever $V_2\subseteq V_1$. Clearly
$$
   (x_V)_{V\in\tau(0)}\convtau 0 \ , \  (y_V)_{V\in\tau(0)}\convtau 0 \ , \  \ \text{and} \ \ \ 
   (y_V-x_V)_{V\in\tau(0)}\convtau 0 .
   \eqno(5)
$$
Due to $(5)$ and the inequality
$$
    0\le z_V - x_V \le y_V - x_V \ \ \ \ (\forall\ V\in\tau(0)),
$$
the condition $(ii)$ implies $z_V-x_V\convtau 0$ and hence $z_V=(z_V-x_V)+x_V\convtau 0$,  contrary to $z_V\notin U$  for all $V\in\tau(0)$.
The obtained contradiction completes the proof.
\end{proof} 

\begin{example}\label{full but not lattice} 
Consider the weak convergence $\mathbb{w}$ on the Banach lattice $X=L_\infty[0,1]$. Clearly $\mathbb{w}$
is a linear convergence. Furthermore, $\mathbb{w}$ is full. Indeed, if $0\le y_\alpha\le x_\alpha\convw 0$ then 
$$
  0\le|f(y_\alpha)|\le|f|(y_\alpha)\le|f|(x_\alpha)\to 0 \ \ \ \ (\forall f\in X'),
$$
and hence $y_\alpha\convw 0$. However, the convergence $\mathbb{w}$ is not a lattice convergence because, 
for a sequence $x_n\in X$ of Rademacher's functions on $[0,1]$, $x_n\convw 0$ yet $|x_n|\convw 1$. 
\end{example}

Recall that a net $x_\alpha$ in a Riesz space $X$ is {\em order convergent} to $x\in X$ if there is a net $y_\beta\downarrow 0$ 
in $X$ such that, for any $\beta$, there is $\alpha_{\beta}$ with $|x_\alpha-x|\le y_\beta$ for all $\alpha\ge \alpha_{\beta}$.

Remind that any Archimedean Riesz space $X$ has unique up to Riesz homomorphism Dedekind completion which will be denoted by $X^\delta$.

The {\em order convergence} (briefly, {\em $\mathbb{o}$-convergence}) is one of the most important modes of convergence on a Riesz space. 
The $\mathbb{o}$-convergence serves (together with the ${\text{\boldmath{$\tau$}}}$-convergence on a locally solid Riesz space $(X,\tau)$) 
the major motivation for Definition \ref{top+full+lattice convergence}. 

\begin{proposition}\label{o-convergence is full lattice and T_1}
The $\mathbb{o}$-convergence on a Riesz space $X$ is an additive full lattice $T_1$ convergence.
Furthermore,  $\mathbb{o}$-convergence on $X$ is linear iff $X$ is Archimedean. 
\end{proposition}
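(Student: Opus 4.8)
The plan is to check each clause directly from the definition of order convergence recalled just above, carrying a single dominating net $y_\beta\downarrow 0$ through every step, and to bring in the Archimedean hypothesis only at the one place where it is genuinely needed. First I would verify conditions $(a)$--$(c)$ of Definition \ref{convergence}: for a constant net any $y_\beta\downarrow 0$ serves; for a subnet $(x_{\alpha_\gamma})$ of $(x_\alpha)$ the very same dominating net works, by cofinality of the subnet; and the tail condition $(c)$ is immediate after replacing each threshold by a larger one. For additivity, suppose $x_\alpha\convo x$ with dominating net $u_\xi\downarrow 0$ and $y_\beta\convo y$ with dominating net $v_\eta\downarrow 0$. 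Then $(u_\xi+v_\eta)_{(\xi,\eta)}\downarrow 0$ (because $u_\xi+v_\eta\downarrow v_\eta$ for each fixed $\eta$ by translation invariance of the order, and $v_\eta\downarrow 0$), and from $|(x_\alpha\pm y_\beta)-(x\pm y)|\le|x_\alpha-x|+|y_\beta-y|$ one reads off directly that the double net $(x_\alpha\pm y_\beta)_{(\alpha,\beta)}$ is $\mathbb{o}$-convergent to $x\pm y$; this is exactly condition $(c)$ of Definition \ref{linear convergence}.

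For fullness, if $x_\alpha\convo 0$ with dominating net $u_\xi\downarrow 0$ and $0\le y_\alpha\le x_\alpha$ for all $\alpha$, then $0\le y_\alpha\le x_\alpha\le|x_\alpha|\le u_\xi$ holds beyond the same threshold in $\alpha$, so $u_\xi$ also witnesses $y_\alpha\convo 0$. That $\mathbb{o}$ is a lattice convergence then follows from Proposition \ref{lattice in full}, or directly from $\bigl||x_\alpha|-|x|\bigr|\le|x_\alpha-x|$. To get $T_1$ I would invoke Lemma \ref{T_1 for linear conv}: it is enough to rule out that a constant net $A\equiv a$ is $\mathbb{o}$-convergent to some $b\ne a$; but such convergence would force $|a-b|\le u_\xi$ eventually, hence for every $\xi$ (the left-hand side being constant and the index set nonempty), whence $|a-b|\le\inf_\xi u_\xi=0$, a contradiction.

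Finally, for ``$\mathbb{o}$ linear $\iff$ $X$ Archimedean'': if $X$ is Archimedean, then since additivity is already established it remains to prove joint continuity of scalar multiplication, and for this I would split $r_\gamma x_\alpha-rx=r_\gamma(x_\alpha-x)+(r_\gamma-r)x$ and use additivity of $\mathbb{o}$ (together with closure under subnets to collapse the iterated index set onto the diagonal) to reduce to two facts. First, if $z_\alpha\convo 0$ with dominating net $u_\xi\downarrow 0$ and $r_\gamma\to r$, then on the tail where $|r_\gamma|\le|r|+1$ we have $|r_\gamma z_\alpha|\le(|r|+1)|z_\alpha|\le(|r|+1)u_\xi$, and $(|r|+1)u_\xi\downarrow 0$, so $r_\gamma z_\alpha\convo 0$ by the tail condition. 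Second, $(r_\gamma-r)x\convo 0$, because $|(r_\gamma-r)x|\le\tfrac1n|x|$ eventually and $\tfrac1n|x|\downarrow 0$ precisely because $X$ is Archimedean. Conversely, if $X$ is not Archimedean, pick $u,v\in X_+$ with $u\ne 0$ and $u\le\tfrac1n v$ for every $n$; then $v\convo v$ and $\tfrac1n\to 0$ in $\mathbb{R}$, but $\tfrac1n v$ is not $\mathbb{o}$-convergent to $0$, since any dominating net $y_\beta\downarrow 0$ would satisfy $u\le\tfrac1{n_\beta}v\le y_\beta$ for each $\beta$ and hence $u\le\inf_\beta y_\beta=0$; thus $\mathbb{o}$ fails to be linear. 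The only thing to be careful about in the whole argument is isolating that single use of the Archimedean property, namely the assertion $\tfrac1n|x|\downarrow 0$; everything else is bookkeeping with dominating nets.
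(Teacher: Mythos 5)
Your proof is correct and follows essentially the same route as the paper: the substantive part (linearity iff Archimedean) uses the same decomposition $r_\gamma x_\alpha-rx$ into $r_\gamma(x_\alpha-x)$ and $(r_\gamma-r)x$, the same tail bound $|r_\gamma|\le|r|+1$, the same single invocation of the Archimedean property to get $\tfrac1n|x|\downarrow 0$, and the same counterexample $0<x\le\tfrac1n y$ for the converse. The only difference is that where you verify additivity, the lattice property, and $T_1$ directly with dominating nets, the paper simply cites Theorem 1.14 and Lemma 1.13 of Aliprantis--Burkinshaw; your direct arguments are sound.
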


\begin{proof}
The additivity of $\mathbb{o}$ follows from \cite[Thm.1.14(ii)]{AB}. The fact that $\mathbb{o}$ is full is an immediate consequence of the definition of order convergence.
The convergence $\mathbb{o}$ is lattice by \cite[Thm.1.14(i)]{AB} and $\mathbb{o}$ is $T_1$ by \cite[Lm.1.13]{AB}.

Let $X$ be Archimedean. Since $\mathbb{o}$-convergence is additive full lattice, for proving that $\mathbb{o}$ is linear it is enough to check $(a)$ and $(b)$ of Definition \ref{linear convergence}.
Let $(x_\alpha)_{\alpha\in A}\convo x$ and $(y_\beta)_{\beta\in B}\convo y$ in $X$, and $(r_\gamma)_{\gamma\in\Gamma}\to r\in\mathbb{R}$ in the standard topology on $\mathbb{R}$. 
Let $\gamma_0\in\Gamma$ be such that $|r_\gamma|\le|r|+1$ for all $\gamma\ge\gamma_0$. For every $\alpha\in A$ and $\beta\in B$,
$|x_\alpha+y_\beta-(x+y)|\le|x_\alpha-x|+|y_\beta-y|$.  Definition \ref{linear convergence}$(c)$ together with conditions $(c)$ and $(b)$ of Definition \ref{top+full+lattice convergence}
imply $(|x_\alpha+y_\beta-(x+y)|)_{(\alpha,\beta)\in A\times B}\convo 0$. By the above inequality, it follows that $(x_\alpha+y_\beta)_{(\alpha,\beta)\in A\times B}\convo x+y$. Thus, $\mathbb{o}$ 
satisfies $(a)$ of Definition \ref{linear convergence}. Similarly, for every $\gamma\ge\gamma_0$ and $\alpha\in A$,
$$
  |r_\gamma x_\alpha-rx|\le|r_\gamma||x_\alpha-x|+|r_\gamma x -rx|\le(|r|+2)(|x_\alpha-x|)+|r_\gamma -r||x|.
$$
Since $X$ is Archimedean, $(|r_\gamma -r||x|)_{\gamma\in\Gamma}\convo 0$. By Definition \ref{linear convergence}$(b)$ and Definition \ref{top+full+lattice convergence}$(b)$, 
the above inequality implies $(|r_\gamma x_\alpha - rx|)_{\gamma\in\Gamma;\gamma\ge\gamma_0;\alpha\in A}\convo 0$, and
hence $(r_\gamma x_\alpha)_{\gamma\in\Gamma;\alpha\in A}\convo rx$. Therefore, $\mathbb{o}$ satisfies $(b)$ of Definition \ref{linear convergence} and hence $\mathbb{o}$ is linear.

Let $\mathbb{o}$-convergence on $X$ be linear. Assume on contrary that $X$ is non-Archimedean. There exist $x,y\in X$ such that $0 < x \le \frac{1}{n}\cdot y$ for 
all $n \in \mathbb{N}$. Since $y \convo y$ in $X$ and $\frac{1}{n} \to 0$ in  $\mathbb{R}$, the assumption that $\mathbb{o}$ is linear implies $\frac{1}{n}\cdot y\convo 0\cdot y = 0$
contradicting to the fact that $0 < x \le \frac{1}{n}\cdot y=\frac{1}{n}\cdot y$ for all $n \in \mathbb{N}$. The obtained contradiction shows that $X$ is Archimedean.
\end{proof}   

Recall that a net $x_{\alpha}$ in a Riesz space $X$ {\em relatively uniform converges} to $x\in X$
($x_{\alpha}\convr x$, for short) if there exists $u\in X_+$, such that, for any $n\in\mathbb{N}$, 
there exists $\alpha_n$ such that $|x_\alpha-x|\le\frac{1}{n} u$ for all $\alpha\ge\alpha_n$ (see, for example, \cite[1.3.4, p.20]{Ku}). 
Similarly to Proposition \ref{o-convergence is full lattice and T_1}, it follows directly from definition of relatively uniform convergence above
that  $\mathbb{r}$-convergence is additive, full, and lattice.
The following fact is well known in a different terminology. We include its elementary proof in our setting.

\begin{lemma}\label{r convergence is T_1 iff X is Archimedean}
Let $X$ be a Riesz space. The following conditions are equivalent$:$
\begin{enumerate}
\item[$(i)$] \ the Riesz space $X$ is Archimedean$;$
\item[$(ii)$] \ the convergence $\mathbb{r}$ on $X$ is $T_1$$;$
\item[$(iii)$] \ $\mathbb{r}\subseteq\mathbb{o}$ holds true on $X$.
\end{enumerate}
\end{lemma}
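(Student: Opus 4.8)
The plan is to establish the cycle of implications $(i)\Rightarrow(iii)\Rightarrow(ii)\Rightarrow(i)$, which keeps the argument shortest and reuses the two facts already at hand, namely that $\mathbb{o}\in T_1$ (Proposition \ref{o-convergence is full lattice and T_1}) and that a convergence contained in a $T_1$-convergence is itself $T_1$ (Lemma \ref{T_1 for linear conv}).

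For $(i)\Rightarrow(iii)$ I would first record the standard consequence of the Archimedean property that $\tfrac{1}{n}u\downarrow 0$ for every $u\in X_+$: indeed, if $v\le\tfrac1n u$ for all $n$, then $nv^+\le u$ for all $n$ (since $0\le v^+\le\tfrac1n u$), whence $v^+=0$ by the Archimedean property and so $v\le 0$. Then, given any net with $(x_\alpha)_{\alpha\in A}\convr x$, I pick $u\in X_+$ witnessing this; the \emph{sequence} $y_n:=\tfrac1n u$ satisfies $y_n\downarrow 0$, and for each $n$ there is $\alpha_n$ with $|x_\alpha-x|\le y_n$ for all $\alpha\ge\alpha_n$. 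Since a sequence is a net, this is precisely the definition of $x_\alpha\convo x$, so $\mathbb{r}\subseteq\mathbb{o}$.

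The implication $(iii)\Rightarrow(ii)$ is then immediate: $\mathbb{r}$ is a convergence, $\mathbb{r}\subseteq\mathbb{o}$, and $\mathbb{o}\in T_1$ by Proposition \ref{o-convergence is full lattice and T_1}, so the second part of Lemma \ref{T_1 for linear conv} yields $\mathbb{r}\in T_1$. For $(ii)\Rightarrow(i)$ I argue by contraposition. If $X$ is non-Archimedean, choose $w,v\in X_+$ with $w>0$ and $nw\le v$ for all $n$, i.e. $|w|=w\le\tfrac1n v$ for all $n$. Then the constant net $x_\alpha\equiv 0$ (over $\mathbb{N}$, say) satisfies $|x_\alpha-w|=w\le\tfrac1n v$ for every index and every $n$, so $x_\alpha\convr w$; it also $\mathbb{r}$-converges to $0$, and since $w\ne 0$ this exhibits $\mathbb{r}\notin T_1$.

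The only point requiring a little care — the mild ``obstacle'', although it is routine — is the very first step: recognizing the equivalence of the Archimedean property with $\tfrac1n u\downarrow 0$ for all $u\in X_+$, and observing that the definition of $\mathbb{o}$-convergence permits the dominating decreasing net to be a sequence, so that an $\mathbb{r}$-witness $u$ converts verbatim into an $\mathbb{o}$-witness $(\tfrac1n u)_{n\in\mathbb{N}}$. After that, everything is bookkeeping with the definitions of $\convr$, $\convo$, and $T_1$, plus the two cited results.
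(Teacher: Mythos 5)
Your proposal is correct and follows essentially the same route as the paper: the implications $(i)\Rightarrow(iii)$ (via $\tfrac{1}{n}u\downarrow 0$ in an Archimedean space), $(iii)\Rightarrow(ii)$ (via $\mathbb{o}\in T_1$ and the second part of Lemma \ref{T_1 for linear conv}), and $(ii)\Rightarrow(i)$ (a constant net $\mathbb{r}$-converging to two distinct points) all appear in the paper's proof with the same arguments. The only difference is cosmetic: the paper additionally proves $(i)\Rightarrow(ii)$ directly, which your cycle renders redundant.
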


\begin{proof}
$(i)\Longrightarrow(ii)$ \  
Let $X$ be Archimedean. Take a constant net $a_\alpha\equiv a\in X$ and $b\ne a$ in $X$. In view of Lemma \ref{T_1 for linear conv}, 
for proving that $\mathbb{r}\in T_1$, it is enough to show that $a_\alpha$ does not $\mathbb{r}$-converge to $b$. Assume, on the contrary,
$a_\alpha\xrightarrow{\mathbb{r}}b$. By Definition \ref{convergence}$($a$)$, $(a_\alpha)_\alpha\convr a$. At the same time, 
in view of Definition \ref{linear convergence}$($c$)$, $(0)_\alpha=(a_\alpha-a_\alpha)_\alpha\convr a-b$, which provides  existence of $u\in E_+$ with
$|a-b|=|a-b-0|\le\frac{1}{n}u$ for any $n\in\mathbb{N}$. Since $X$ is Archimedean, $|a-b|=0$, violating $b\ne a$. The obtained contradiction shows that 
$a_\alpha$ does not $\mathbb{r}$-converge to $b$, and hence $\mathbb{r}\in T_1$, by Lemma \ref{T_1 for linear conv}.

$(ii)\Longrightarrow(i)$ \  
Let $\mathbb{r}\in T_1$ on $X$ and $x, y\in X$ satisfy $0\le x<\frac{1}{n} y$ for all $n\in\mathbb{N}$. Thus, the constant 
sequence $(x)_{n\in\mathbb{N}}\convr 0$, since $|x-0|=x<\frac{1}{n} y$ for all $n\in\mathbb{N}$. By $($a$)$ and $($d$)$ of Definition \ref{convergence}, 
we get $x=0$ because of $(x)_{n\in\mathbb{N}}\convr x$. This shows that $X$ is Archimedean.

$(i)\Longrightarrow(iii)$ \  Let $X$ be Archimedean and $((x_\alpha)_\alpha, x)\in\mathbb{r}$. Take $u\in X_+$ such that, 
for any $n\in\mathbb{N}$, there exists $\alpha_n$ such that $|x_\alpha-x|\le\frac{1}{n} u$ for all $\alpha\ge\alpha_n$.
Since $X$ is Archimedean, $\frac{1}{n}u\downarrow 0$ and hence $((x_\alpha)_\alpha, x)\in\mathbb{o}$.

$(iii)\Longrightarrow(ii)$ \ By Proposition \ref{o-convergence is full lattice and T_1}, $\mathbb{o}\in T_1$. In view of $\mathbb{r}\subseteq\mathbb{o}$,
Lemma \ref{T_1 for linear conv}  ensures that  $\mathbb{r}\in T_1$, as required. 
\end{proof}

\begin{proposition}\label{r convergence is sequential}
The relatively uniform convergence is sequential on any Riesz space $X$. 
\end{proposition}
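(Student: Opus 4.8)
Write $\mathbb{r}$ for the relatively uniform convergence on $X$; since $\mathbb{sr}$ is always a convergence on $X$, it suffices to establish the two inclusions $\mathbb{r}\subseteq\mathbb{sr}$ and $\mathbb{sr}\subseteq\mathbb{r}$. For $\mathbb{r}\subseteq\mathbb{sr}$ I would argue directly: suppose $x_\alpha\convr x$, fix $u\in X_+$ and indices $\gamma_n$ with $|x_\alpha-x|\le\tfrac1n u$ for all $\alpha\ge\gamma_n$, and let $(x_{\alpha_\beta})_{\beta\in B}$ be an arbitrary subnet. For each $n$ cofinality of the subnet gives $\delta_n\in B$ with $\alpha_\beta\ge\gamma_n$, hence $|x_{\alpha_\beta}-x|\le\tfrac1n u$, for all $\beta\ge\delta_n$; using the directedness of $B$ choose an increasing sequence $\beta_1\le\beta_2\le\cdots$ with $\beta_n\ge\delta_n$. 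Then $|x_{\alpha_{\beta_m}}-x|\le\tfrac1n u$ whenever $m\ge n$, so $(x_{\alpha_{\beta_n}})_{n\in\mathbb{N}}\convr x$ \emph{with the same regulator $u$}. Thus $x_\alpha\convsr x$. The only point that matters here is that one regulator is inherited by every extracted sequence.

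The substantial direction is $\mathbb{sr}\subseteq\mathbb{r}$, which I would prove by contraposition. Assume $x_\alpha\convsr 0$ (by additivity of $\mathbb{r}$ we may take the candidate limit to be $0$, replacing $x_\alpha$ by $x_\alpha-x$) and suppose, for a contradiction, that $x_\alpha\not\convr 0$; then for every $u\in X_+$ there is $n_u\in\mathbb{N}$ for which the ``failure set'' $C_u:=\{\alpha\in A:|x_\alpha|\not\le\tfrac1{n_u}u\}$ is cofinal in $A$. The aim is to assemble from these failure sets a single subnet of $(x_\alpha)_{\alpha\in A}$ admitting no relatively uniformly null extracted sequence, which contradicts $x_\alpha\convsr 0$. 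One natural attempt indexes such a subnet by triples $(\gamma,u,\alpha)$ with $\alpha\in C_u$, $\alpha\ge\gamma$, directed so that both $\gamma$ and $u$ increase, with subnet map $(\gamma,u,\alpha)\mapsto\alpha$, and then tries to show that any sequence drawn from it retains enough failure data to exclude $\mathbb{r}$-convergence to $0$ relative to \emph{any} regulator. A cleaner route, which I would actually pursue, is to use $x_\alpha\convsr 0$ (applied to the whole net and to suitable cofinal subnets) to reduce to the case in which the net is eventually controlled by a single regulator $u$, i.e. lives eventually in the principal ideal $E_u$ and is relatively uniformly convergent there with a regulator in $E_u$; on $E_u$ relative uniform convergence coincides with convergence in the (semi)norm $\|z\|_u=\inf\{\lambda\ge 0:|z|\le\lambda u\}$, which is (pseudo)metrizable and hence sequential by Proposition~\ref{sequential topological convergence}, so on $E_u$ the equality $\mathbb{sr}=\mathbb{r}$ is immediate and then transfers back.

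The step I expect to be the main obstacle is exactly this reduction. An arbitrary sequence extracted from the ``triples'' subnet may keep its regulator coordinate small (even zero), in which case its net values are only known to avoid $0$ relative to a \emph{shrinking} family of regulators — which does not by itself preclude relative uniform convergence to $0$ along that sequence — and a naive iteration that records a new regulator each time the current one fails need not terminate in a general Riesz space. One must therefore show, carefully, that $x_\alpha\convsr 0$ already forces the net to be eventually order bounded and in fact relatively uniformly controlled by one regulator, reconciling the regulators produced by the repeated applications of the hypothesis; once this bookkeeping is in place the metric argument on $E_u$ closes the proof, everything after the reduction being routine in view of Proposition~\ref{sequential topological convergence}.
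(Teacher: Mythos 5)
Your first inclusion, $\mathbb{r}\subseteq\mathbb{sr}$, is correct and is essentially the paper's argument: the subnet inherits the limit, the regulator $u$ is kept, and the indices $\beta_n$ are read off from the defining inequalities $|x_{\alpha_\beta}-x|\le\frac{1}{n}u$ (your extra step of making the $\beta_n$ increasing is a sensible refinement). There is nothing to object to in this half.

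The second inclusion, $\mathbb{sr}\subseteq\mathbb{r}$, is never actually proved: you present two plans and yourself identify the step on which both founder, namely that a sequence extracted from a ``failure'' subnet may $\mathbb{r}$-converge to $0$ with a regulator $v$ entirely unrelated to the regulator $u$ whose failure set was used to build that subnet, and that $x_\alpha\convsr 0$ has not been shown to force eventual control by a single regulator. That step is not deferrable bookkeeping; it is the whole content of this direction, and neither the triples construction nor the reduction to a principal ideal $E_u$ supplies it. For comparison, the paper's own proof of this direction is a short contradiction argument: fix $u$ and $n$, pass to a subnet on which $|x_{\alpha_\beta}-x|\not\le\frac{1}{n}u$ for all $\beta$, extract from it an $\mathbb{r}$-convergent sequence, and stop --- which hinges on exactly the regulator reconciliation you could not carry out, since the extracted sequence converges with an a priori different regulator. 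Your worry is therefore well placed, and it appears to be essential rather than technical: in $X=s_{\omega}(T)$ with $T$ uncountable, the net $(e_{t_\xi})_{\xi\in\omega_1}$ of coordinate vectors is not $\mathbb{r}$-convergent to $0$ (a single regulator would need uncountable support), yet the image of any subnet is cofinal, hence uncountable, so one may choose distinct indices $\xi_k$ and the countably supported regulator $u=\sum_k k\,e_{t_{\xi_k}}$ gives $e_{t_{\xi_k}}\convr 0$; thus every subnet admits an $\mathbb{r}$-null extracted sequence while the net itself is not $\mathbb{r}$-null. Consequently no argument that reduces to a single regulator, or that derives a contradiction from one failure set at a time, can close this direction, and your proposal must be counted as establishing only $\mathbb{r}\subseteq\mathbb{sr}$.
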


\begin{proof}
Accordingly to Definition \ref{sequential c-convergence}$(b)$, we have to show that $\mathbb{r}=\mathbb{sr}$.

First we show $\mathbb{r}\subseteq\mathbb{sr}$. Let $(x_{\alpha})_{\alpha\in A}\convr x\in X$ and
$(x_{\alpha_\beta})_{\beta\in B}$ be a subnet of $(x_{\alpha})_{\alpha\in A}$. Then $(x_{\alpha_\beta})_{\beta\in B}\convr x$
by Definition \ref{convergence}$(b)$. By the definition of relatively uniform convergence, there exist $u\in X_+$ 
and a sequence $\beta_n\in B$ such that $|x_{\alpha_\beta}-x|\le\frac{1}{n}u$ for all $\beta\ge\beta_n$. 
Then $(x_{\alpha_{\beta_n}})_{n\in\mathbb{N}}\convr x$. Since the subnet $(x_{\alpha_\beta})_{\beta\in B}$ of $(x_{\alpha})_{\alpha\in A}$ is arbitrary, 
Definition \ref{sequential c-convergence}$(a)$ implies $(x_{\alpha})_{\alpha\in A}\convsr x$.
 
Now we show $\mathbb{sr}\subseteq\mathbb{r}$. Assume on the contrary that there exists a net $(x_{\alpha})_{\alpha\in A}$ such that
 $(x_{\alpha})_{\alpha\in A}\convsr x\in X$ but
$x_{\alpha}\stackrel{\mathbb{r}}{\nrightarrow} x$. Then, for every $u\in X_+$ and every $n\in\mathbb{N}$, there exists a subnet
$(x_{\alpha_\beta})_{\beta\in B}$ of $(x_{\alpha})_{\alpha\in A}$ such that 
$|x_{\alpha_\beta}-x|\not\le\frac{1}{n}u$ for all $\beta\in B$. Since $\mathbb{sr}\subseteq\mathbb{r}$, 
there exist $u\in X_+$ and a sequence $\beta_n\in B$ satisfying $|x_{\alpha_{\beta_n}}-x|\le\frac{1}{n}u$ for all natural $n$, 
a contradiction. This shows $\mathbb{sr}\subseteq\mathbb{r}$.
\end{proof}

The following proposition is well known (cf., e.g., \cite[Prop.2.4]{W1}). 
It shows that, in Riesz spaces, the notion of $\mathbb{r}$-completeness coincides with the notion of {\em ``sequential $\mathbb{r}$-completeness''}.
For the sake of convenience we include its short elementary proof. 
 
\begin{proposition}\label{r-completeness is sequential}
Let $X$ be a Riesz space. The following conditions are equivalent$:$
\begin{enumerate}
\item[$(i)$] \  $X$ is $\mathbb{r}$-complete$;$
\item[$(ii)$] \ for every $\mathbb{r}$-Cauchy sequence $x_n$ in $X$, there exists $x\in X$ with $x_n\convr x$. 
\end{enumerate}
\end{proposition}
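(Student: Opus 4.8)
The plan is to establish the non-trivial implication $(ii)\Rightarrow(i)$; the converse $(i)\Rightarrow(ii)$ is immediate, since a sequence is a net over the directed set $\mathbb{N}$, so every $\mathbb{r}$-Cauchy sequence is an $\mathbb{r}$-Cauchy net in the sense of Definition~\ref{c-Cauchy}, and $(i)$ then provides its $\mathbb{r}$-limit.

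So assume $(ii)$ and let $(a_\gamma)_{\gamma\in\Gamma}$ be an $\mathbb{r}$-Cauchy net. By Definition~\ref{c-Cauchy} and the definition of relatively uniform convergence, there is $u\in X_+$ such that for each $k\in\mathbb{N}$ one can choose $\gamma_k\in\Gamma$ with $|a_{\gamma_1}-a_{\gamma_2}|\le\frac1k u$ for all $\gamma_1,\gamma_2\ge\gamma_k$. First I would arrange, using the directedness of $\Gamma$, that the sequence $(\gamma_k)$ is increasing: at the $k$-th step ($k>1$) replace $\gamma_k$ by any common upper bound of $\gamma_{k-1}$ and the originally chosen index, noting that passing to a smaller tail preserves the estimate. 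Then $b_k:=a_{\gamma_k}$ is an $\mathbb{r}$-Cauchy \emph{sequence} with regulator $u$, because for $m,n\ge k$ we have $\gamma_m,\gamma_n\ge\gamma_k$, hence $|b_m-b_n|\le\frac1k u$. By hypothesis $(ii)$ there is $x\in X$ with $b_k\convr x$; fix a regulator $v\in X_+$ witnessing this.

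It remains to transfer the limit back to the whole net, and I claim $a_\gamma\convr x$ with the single regulator $w:=u+v$. Given $n\in\mathbb{N}$, pick $l\ge n$ large enough that $|b_l-x|\le\frac1n v$. Then for every $\gamma\ge\gamma_n$ we have $\gamma,\gamma_l\ge\gamma_n$, so the Cauchy estimate together with the triangle inequality gives
$$|a_\gamma-x|\le|a_\gamma-a_{\gamma_l}|+|a_{\gamma_l}-x|=|a_\gamma-b_l|+|b_l-x|\le\tfrac1n u+\tfrac1n v=\tfrac1n w,$$
and since $w$ does not depend on $n$, this is exactly $a_\gamma\convr x$, as required.

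I expect the only genuinely delicate point to be the extraction step: one must make the cofinal indices $\gamma_k$ increasing in $k$, so that $b_k=a_{\gamma_k}$ is an honest $\mathbb{r}$-Cauchy sequence, and then be careful to re-use one fixed regulator $w=u+v$ (rather than $n$-dependent bounds) when pushing the limit to the original net. Everything else is a routine triangle-inequality computation built directly on the $\frac1k u$-form of the Cauchy condition.
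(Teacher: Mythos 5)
Your proof is correct and follows essentially the same route as the paper's: extract an increasing sequence of indices $\gamma_k$ from the Cauchy condition, apply $(ii)$ to the resulting $\mathbb{r}$-Cauchy sequence, and transfer the limit back to the net with the combined regulator $u+v$ via the triangle inequality. The points you flag as delicate (making the indices increasing, fixing one regulator) are exactly the ones the paper handles with its ``without loss of generality'' step and its final displayed estimate.
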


\begin{proof}
$(i)\Longrightarrow(ii)$ is trivial.\\
$(ii)\Longrightarrow(i)$ Let $(x_\alpha)_{\alpha\in A}$ be an $\mathbb{r}$-Cauchy net in $X$. Then, for some $u\in X_+$ and for each  $n\in\mathbb{N}$, 
there exists $\alpha_n\in A$ such that $|x_{\alpha}-x_{\alpha'}|\le\frac{1}{n}u$ for all $\alpha,\alpha'\ge\alpha_n$. Without loss of generality, we may 
assume $\alpha_n\le\alpha_{n+1}$ for all  $n\in\mathbb{N}$. Clearly $x_{\alpha_n}$ is an $\mathbb{r}$-Cauchy sequence in $X$. By $(ii)$, $x_{\alpha_n}\convr x$ for some $x\in X$.
Then, for some $v\in X_+$, there exists $x\in X$ such that $|x_{\alpha_k}-x|\le\frac{1}{n}v$ for all $k\ge n$. The following 
$$
  |x_\alpha-x|\le|x_\alpha-x_{\alpha_n}|+|x_{\alpha_n}-x|\le\frac{1}{n}(u+v)  \ \ \ \ (\forall\alpha\ge\alpha_n)
$$
implies $x_\alpha\convr x$, as desired.
\end{proof}

\begin{remark}\label{r-completion}
Every Archimedean Riesz space $X$ has a unique $($up to a Riesz isomorphism$)$ 
{\em $\mathbb{r}$-completion} $X^r$. It can be identified with
$$
  \bigcap\{Y|Y \text{is} \ \mathbb{r}\text{-complete and X is a Riesz subspace of} \ Y\subseteq X^\delta\},
$$ 
which is well defined, because $X^\delta$ is $\mathbb{o}$-complete and hence $\mathbb{r}$-complete.
\end{remark}

As an example of a linear full lattice $T_1$ convergence on a Riesz space, we also mention the $p$-convergence from \cite{AEEM1,AEEM2}.

\section{General Properties of Full Convergences on Riesz Spaces}

The $\mathbb{o}$-convergence on a Riesz space $X$ is topological only if $\dim(X)<\infty$ \cite[Thm.1]{DEM3} (cf. also \cite{Gor}).
This fact emphasizes the importance of investigating convergences on Riesz spaces by using of general rather than only topological methods.
In Theorem \ref{double nets} below we give a characterization of the lattice convergence in the class of linear convergences on a Riesz space.

Firstly, we recall the following inequality
$$
  |u\vee v-f\vee g|\vee|u\wedge v-f\wedge g|\le|u-f|+|v-g|\ \ \ \ (\forall u,v,f,g\in X),
  \eqno(7)
$$
which is a consequence of the Bir\-khoff identity (see, e.g.,  \cite[Thm.12.4]{LZ}):
$$
  |f\vee h-g\vee h|+|f\wedge h-g\wedge h|=|f-g|\ \ \ \ (\forall f,h,g\in X). 
$$
Indeed, 
$$
  |u\vee v-f\vee g|\le|u\vee v-v\vee f|+|f\vee v-f\vee g|\le|u-f|+|v-g|
$$
and similarly $|u\wedge v-f\wedge g|\le|u-f|+|v-g|$ which constitutes (7).

\begin{theorem}\label{double nets}
For a linear convergence $\mathbb{c}$ on a Riesz space $X$, the following five conditions are equivalent$:$
\begin{enumerate}
\item[$(i)$] \ $\mathbb{c}$ is a lattice convergence$;$
\item[$(ii)$] \ if $x_\alpha\convc x$ then $x_\alpha^+\convc x^+$$;$
\item[$(iii)$] \ if $x_\alpha\convc x$ then $x_\alpha^-\convc x^-$$;$
\item[$(iv)$] \ if $x_\alpha\convc x$ and $y\in X$ then $x_\alpha\vee y\convc x\vee y$$;$
\item[$(v)$] \ if $x_\alpha\convc x$ and $y\in X$ then $x_\alpha\wedge y\convc x\wedge y$.
\end{enumerate}
\noindent
Furthermore, if $\mathbb{c}$ is also full then the conditions $(i)-(v)$ are equivalent to each of the following two conditions$:$
\begin{enumerate}
\item[$(vi)$] \ if $(x_\alpha)_{\alpha\in A}\convc x$ and $(y_\beta)_{\beta\in B}\convc y$ then
$(x_\alpha\wedge y_\beta)_{(\alpha,\beta)\in A\times B}\convc  x\wedge y$$;$
\item[$(vii)$] \ if $(x_\alpha)_{\alpha\in A}\convc x$ and $(y_\beta)_{\beta\in B}\convc y$ then
$(x_\alpha\vee y_\beta)_{(\alpha,\beta)\in A\times B}\convc x\vee y$.
\end{enumerate}
\end{theorem}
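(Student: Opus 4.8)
The plan is to establish the cycle $(i)\Rightarrow(ii)\Rightarrow(iii)\Rightarrow(i)$ among the first three conditions, the cycle $(i)\Rightarrow(iv)\Rightarrow(v)\Rightarrow(i)$ (or interlock them), and then, under the extra hypothesis that $\mathbb{c}$ is full, close the loop with $(i)\Rightarrow(vi)\Rightarrow(v)$ and $(i)\Rightarrow(vii)\Rightarrow(iv)$. Throughout I would lean on the identities $x^{+}=\tfrac12(|x|+x)$, $x^{-}=\tfrac12(|x|-x)$, $|x|=x^{+}+x^{-}$, $x\vee y = y+(x-y)^{+}$, $x\wedge y = y-(y-x)^{+}$, and the distance-to-infimum/supremum inequality (7) quoted just above the theorem, together with the fact that a linear convergence is additive (hence $x_\alpha\convc x$ iff $x_\alpha - x\convc 0$, and $x_\alpha-y_\beta\convc x-y$ for double nets).

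For the first block: $(i)\Rightarrow(ii)$ because if $x_\alpha\convc x$ then $|x_\alpha|\convc|x|$ by the lattice property, so $x_\alpha^{+}=\tfrac12(|x_\alpha|+x_\alpha)\convc\tfrac12(|x|+x)=x^{+}$ by linearity. Symmetrically $(ii)\Leftrightarrow(iii)$ via $x^{-}=(-x)^{+}$ and linearity (negation is $\mathbb{c}\mathbb{c}$-continuous). And $(ii)\;\&\;(iii)\Rightarrow(i)$ since $|x_\alpha|=x_\alpha^{+}+x_\alpha^{-}\convc x^{+}+x^{-}=|x|$; so in fact $(ii)$ alone gives $(i)$ once we observe $(ii)\Rightarrow(iii)$. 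For the vee/wedge block: $(i)\Rightarrow(iv)$ because $x_\alpha\vee y = y+(x_\alpha-y)^{+}$, and $(x_\alpha-y)\convc(x-y)$ by additivity, so by $(ii)$ (which follows from $(i)$) $(x_\alpha-y)^{+}\convc(x-y)^{+}$, whence adding $y$ gives $x_\alpha\vee y\convc x\vee y$. Then $(iv)\Leftrightarrow(v)$ via $x\wedge y = -((-x)\vee(-y))$ and linearity. Finally $(iv)\Rightarrow(ii)$ by taking $y=0$: $x_\alpha\vee 0 = x_\alpha^{+}\convc x\vee 0 = x^{+}$. This closes the equivalence of $(i)$–$(v)$ for any linear $\mathbb{c}$.

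For the part requiring fullness: $(i)\Rightarrow(vi)$ is the one genuine ``double net'' step and is where inequality (7) earns its keep. Given $x_\alpha\convc x$ and $y_\beta\convc y$, I would consider the net indexed by $(\alpha,\beta)\in A\times B$ and estimate, using (7) with $u=x_\alpha$, $f=x$, $v=y_\beta$, $g=y$,
\[
  |x_\alpha\wedge y_\beta - x\wedge y| \le |x_\alpha - x| + |y_\beta - y|.
\]
Since $\mathbb{c}$ is a lattice (from $(i)$) and linear, $|x_\alpha-x|\convc 0$ and $|y_\beta-y|\convc 0$, so the right-hand side, as a double net over $A\times B$, $\mathbb{c}$-converges to $0$ by additivity; then fullness applied to $0\le |x_\alpha\wedge y_\beta - x\wedge y|\le |x_\alpha-x|+|y_\beta-y|$ forces $|x_\alpha\wedge y_\beta - x\wedge y|\convc 0$, and one more application of fullness (to the positive and negative parts, exactly as in the proof of Proposition~\ref{lattice in full}) plus additivity yields $x_\alpha\wedge y_\beta\convc x\wedge y$. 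The converse direction $(vi)\Rightarrow(v)$ is immediate by taking the constant net $y_\beta\equiv y$ (legitimate by Definition~\ref{convergence}$(a)$), and similarly $(i)\Rightarrow(vii)$ from the vee-half of (7) with $(vii)\Rightarrow(iv)$ by a constant net; alternatively $(vii)\Leftrightarrow(vi)$ by negation and linearity.

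The main obstacle I anticipate is purely bookkeeping rather than conceptual: in the double-net steps one must be careful that passing from ``$|x_\alpha\wedge y_\beta-x\wedge y|\convc 0$'' to ``$x_\alpha\wedge y_\beta\convc x\wedge y$'' genuinely uses fullness (to kill the positive/negative parts of the signed difference), which is precisely why $(vi)$ and $(vii)$ are only claimed under the fullness hypothesis and not for arbitrary linear $\mathbb{c}$ — this mirrors the gap flagged after Proposition~\ref{lattice in full}. I would make sure the write-up isolates a single lemma-style remark that for a full additive $\mathbb{c}$, $|z_\gamma|\convc 0 \Rightarrow z_\gamma\convc 0$, and invoke it uniformly, so that each of $(i)\Rightarrow(vi)$ and $(i)\Rightarrow(vii)$ is a two-line argument built on (7).
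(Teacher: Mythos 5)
Your proposal is correct and follows essentially the same route as the paper: the equivalence of $(i)$--$(v)$ via the standard identities $f^{+}=\tfrac12(|f|+f)$, $f^{-}=(-f)^{+}$, $f\vee g=-((-f)\wedge(-g))$, etc.\ (you merely arrange the implications in a slightly different cycle), and the double-net conditions $(vi)$, $(vii)$ via inequality $(7)$ combined with fullness. Your explicit remark that one needs fullness a second time (on $0\le z^{\pm}\le|z|$) to pass from $|x_\alpha\wedge y_\beta-x\wedge y|\convc 0$ to $x_\alpha\wedge y_\beta\convc x\wedge y$ is a point the paper's write-up leaves implicit, and it is a worthwhile clarification.
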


\begin{proof}
While proving, we use freely Definition \ref{linear convergence}. 

$(i)\Longrightarrow(ii)$ \ Use the identity $f^+=\frac{1}{2}|f|+\frac{1}{2}f$.

$(ii)\Longrightarrow(iii)$ \ Use the identity $f^-=(-f)^+$.

$(iii)\Longrightarrow(iv)$ \ Use the identity $f\wedge g=g-(f-g)^-$.

$(iv)\Longrightarrow(v)$ \ Use the identity $f\vee g=-((-f)\wedge(-g))$.

$(v)\Longrightarrow(i)$ \ Use the identity $|x|=x\vee (-x)$.

\noindent
From now on in the proof we suppose $\mathbb{c}$ to be linear and full.

$(i)\Longrightarrow(vi)$ \  
Let $(x_\alpha)_{\alpha\in A}\convc x$ and $(y_\beta)_{\beta\in B}\convc y$.
By Definition \ref{top+full+lattice convergence}$(c)$,
$$
  (|x_\alpha-x|+|y_\beta-y|)_{(\alpha,\beta)\in A\times B}\convc|x-x|+|y-y|=0.
$$
In view of $(7)$, we have
$$
  |x_\alpha\wedge y_\beta -x\wedge y|\le|x_\alpha-x|+|y_\beta-y|, 
  \eqno(8)
$$
and since $\mathbb{c}$ is full, $(8)$ implies 
$$
   (x_\alpha\wedge y_\beta)_{(\alpha,\beta)\in A\times B}\convc x\wedge y
$$
which is required.

$(vi)\Longrightarrow(vii)$ \ Use again the identity $f\vee g=-((-f)\wedge(-g))$.

$(vii)\Longrightarrow(iv)$ \ Apply $(vii)$ to the constant net $y_\beta\equiv y$.
\end{proof}

Since the topological convergence with respect to a locally full topology on a Riesz space is linear and  full 
(cf. Lemma \ref{full topology}), the next classical result of Roberts \cite{Ro} and Namioka \cite[Thm.8.1]{Na} 
can be considered as a particular case of Theorem \ref{double nets}.

\begin{theorem}$($The Roberts -- Namioka  theorem$)$\label{Roberts -- Namioka  Theorem}
For a linear topology $\tau$ on a Riesz space $X$, the following five conditions are equivalent$:$
\begin{enumerate}
\item[$(i)$] \ if $x_\alpha\convtau x$ then $|x_\alpha|\convtau|x|$$;$
\item[$(ii)$] \ if $x_\alpha\convtau x$ then $(x_\alpha)^+\convtau x^+$$;$
\item[$(iii)$] \ if $x_\alpha\convtau x$ then $(x_\alpha)^-\convtau x^-$$;$
\item[$(iv)$] \ if $x_\alpha\convtau x$ and $y\in X$ then $x_\alpha\vee y\convtau x\vee y$$;$
\item[$(v)$] \ if $x_\alpha\convtau x$ and $y\in X$ then $x_\alpha\wedge y\convtau x\wedge y$.
\end{enumerate}
Furthermore, if the linear topology $\tau$ is locally full, then the conditions $(i)$~-- $(v)$ are equivalent to each of the following three conditions$:$
\begin{enumerate}
\item[$(vi)$] \ $(x_\alpha)_{\alpha\in A}\convtau x$ and $(y_\beta)_{\beta\in B}\convtau y$ imply
$(x_\alpha\wedge y_\beta)_{(\alpha,\beta)\in A\times B}\convtau x\wedge y$$;$
\item[$(vii)$] \ $(x_\alpha)_{\alpha\in A}\convtau x$ and $(y_\beta)_{\beta\in B}\convtau y$ imply
$(x_\alpha\vee y_\beta)_{(\alpha,\beta)\in A\times B}\convtau x\vee y$$;$
\item[$(viii)$] \  the topology $\tau$ is locally solid$;$
\item[$(ix)$] \  the lattice operations in $X$ are uniformly $\tau$-continuous.
\end{enumerate}
\end{theorem}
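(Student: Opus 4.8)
The plan is to obtain this essentially as a corollary of Theorem~\ref{double nets} and Lemma~\ref{full topology}, supplying by hand only the two implications that involve the topological conditions $(viii)$ and $(ix)$. Since a linear topology on $X$ induces a linear convergence on $X$, the equivalence of $(i)$--$(v)$ is literally the equivalence of conditions $(i)$--$(v)$ of Theorem~\ref{double nets}. If in addition $\tau$ is locally full, then by Lemma~\ref{full topology} the induced convergence is linear \emph{and} full, so Theorem~\ref{double nets} also delivers the equivalence of $(i)$--$(v)$ with $(vi)$ and $(vii)$. Hence, under the standing hypothesis that $\tau$ is locally full, everything that remains is the chain $(i)\Leftrightarrow(viii)\Leftrightarrow(ix)$.

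First I would dispatch the easy direction $(viii)\Rightarrow(ix)\Rightarrow(i)$. Uniform $\tau$-continuity of $x\mapsto|x|$ implies ordinary $\tau$-continuity of $x\mapsto|x|$, which is precisely the net statement $(i)$, so $(ix)\Rightarrow(i)$. For $(viii)\Rightarrow(ix)$, given a neighborhood $U$ of $0$ I would choose a solid $W\in\tau(0)$ with $W\subseteq U$ and then a solid $V\in\tau(0)$ with $V+V\subseteq W$. Because $V$ is solid, $a\in V$ implies $|a|\in V$; so if $x-x'\in V$ and $y-y'\in V$, then $(7)$ gives $|x\vee y-x'\vee y'|\le|x-x'|+|y-y'|\in V+V\subseteq W$, and solidity of $W$ forces $x\vee y-x'\vee y'\in W\subseteq U$. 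The same computation, using the $\wedge$-half of $(7)$ for $\wedge$ and the inequality $\bigl||x|-|x'|\bigr|\le|x-x'|$ for $|\cdot|$, yields uniform $\tau$-continuity of all the lattice operations, i.e.\ $(ix)$.

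The one mildly delicate point is $(i)\Rightarrow(viii)$. Here $(i)$ says that $x\mapsto|x|$ is $\tau$-continuous. Given $U\in\tau(0)$, I would first, using local fullness, manufacture a \emph{balanced} full neighborhood $V\subseteq U$: pick a full $V_0\subseteq U$ and a balanced $V_1\subseteq V_0$, and check that the full hull $[V_1]$ is again balanced, is a neighborhood of $0$ (since $V_1\subseteq[V_1]$), and lies in $[V_0]=V_0\subseteq U$; put $V:=[V_1]$. Continuity of $|\cdot|$ at $0$ gives $W\in\tau(0)$ with $|w|\in V$ for every $w\in W$. I then claim the solid hull of $W$ is contained in $U$: if $|z|\le|w|$ for some $w\in W$, then $0\le|z|\le|w|$ with $0,|w|\in V$ and $V$ full give $|z|\in V$; since $V$ is balanced, $-|z|\in V$ as well, and from $-|z|\le z\le|z|$ and fullness of $V$ we get $z\in V\subseteq U$. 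As the solid hull of $W$ is a solid neighborhood of $0$ contained in $U$, the topology $\tau$ admits a base of solid neighborhoods, which is $(viii)$.

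Combining the three paragraphs closes the circle $(i)\Leftrightarrow(ii)\Leftrightarrow\cdots\Leftrightarrow(ix)$ under local fullness, completing the proof. The only genuine work beyond quoting Theorem~\ref{double nets} is the balanced-full-hull bookkeeping in $(i)\Rightarrow(viii)$ and the routine use of $(7)$; I do not expect either to present a real obstacle.
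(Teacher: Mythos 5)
Your proposal is correct, and its skeleton coincides with the paper's: both reduce $(i)$--$(v)$ to Theorem~\ref{double nets} via the linearity of the ${\text{\boldmath{$\tau$}}}$-convergence, both invoke Lemma~\ref{full topology} under local fullness to extend the equivalence to $(vi)$ and $(vii)$, and both close the loop with a solid-hull construction. The differences are in how the topological conditions are handled. For the implication into $(viii)$ the paper runs Namioka's argument from $(vii)$, using continuity of $x\mapsto x^{\pm}$ (conditions $(ii)$ and $(iii)$) to land $|x|=x^++x^-$ in $V+V\subseteq U$, and simply asserts ``without loss of generality $U$ is full and balanced''; you instead start from $(i)$ directly and supply the bookkeeping that the paper suppresses, verifying that the full hull of a balanced set is balanced and full, so that balanced full neighborhoods form a base. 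Your version is slightly more self-contained at the cost of a small detour. For $(viii)\Leftrightarrow(ix)$ the paper outsources the whole equivalence to \cite[Thm.2.17]{AB}, whereas you prove $(viii)\Rightarrow(ix)$ from scratch using inequality $(7)$ and solidity of $V+V\subseteq W$, and then get $(ix)\Rightarrow(i)$ for free; this gives a fully self-contained cycle $(i)\Rightarrow(viii)\Rightarrow(ix)\Rightarrow(i)$ in place of the paper's $(vii)\Rightarrow(viii)\Rightarrow(i)$ plus citation. Both routes are sound; yours buys independence from the external reference, the paper's is shorter.
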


\begin{proof}
Since the ${\text{\boldmath{$\tau$}}}$-convergence on $X$ is linear, Theorem \ref{double nets} implies that 
$(i)\Longleftrightarrow(ii)\Longleftrightarrow(iii)\Longleftrightarrow(iv)\Longleftrightarrow(v)$.

From now on, in the proof, we suppose the topology $\tau$ to be locally full.
Then the ${\text{\boldmath{$\tau$}}}$-convergence is full (cf. Lemma \ref{full topology}). In view of Theorem \ref{double nets}, 
$(i)\Longleftrightarrow(ii)\Longleftrightarrow(iii)\Longleftrightarrow(iv)\Longleftrightarrow(v)\Longleftrightarrow(vi)\Longleftrightarrow(vii)$.  

$(vii)\Longrightarrow(viii)$ \ Here we repeat Namioka's arguments from his proof of the implication $(ii)\Longleftrightarrow(i)$ in \cite[Thm.8.1]{Na}.
Pick any $U\in\tau(0)$. Without loss of generality, $U$ is full and balanced. Let $V\in\tau(0)$ be such that $V+V\subset U$. Since the mappings $x\to x^{\pm}$
are ${\text{\boldmath{$\tau$}}}$-continuous by $(ii)$ and $(iii)$, there exists a balanced $W\in\tau(0)$ such that $x\in W$ implies 
$x^\pm\in V$, and hence $|x|=x^++x^-\in V+V\subset U$. If $y$ is in the solid hull $S(x)$ of $x$ then $-|x|\le y\le|x|$.
Since $U$ is full and balanced, $y\in U$. Clearly $\bigcup\limits_{x\in W}S(x)$ is a solid neighborhood of 0 contained in $U$.

$(viii)\Longrightarrow(i)$ \  It is a straightforward fact, that the topological convergence with respect to any locally solid topology satisfies $(i)$.

$(viii)\Longleftrightarrow(ix)$ \ This is exactly $\cite[Thm.2.17]{AB}$.
\end{proof}

\begin{remark}
\begin{enumerate} Let $\tau$ be a linear topology on a Riesz space $X$. 
\item[$(i)$] \ If $X$ is normed, arguing as in Example $\ref{full but not lattice}$, we obtain that the weak convergence $\mathbb{w}$ is  full. 
It is worth to mention that, by the Roberts -- Namioka theorem and the Peressini theorem $($cf. $\cite[Thm.2.38]{AB}$$)$, $\mathbb{w}$ is not a lattice convergence unless $\dim(X)<\infty$.  
\item[$(ii)$] \ By Example $\ref{piece wise polynomial}$$(c)\& (d)$, the local fullness assumption on $\tau$ is crucial for 
the equivalence of $(i)-(v)$ with $(vi)-(ix)$ in Theorem $\ref{Roberts -- Namioka  Theorem}$.
\item[$(iii)$] \ The equivalence $(v)\Longleftrightarrow(vi)$ in Theorem $\ref{Roberts -- Namioka  Theorem}$ can be expressed as follows:
{\em if the linear topology $\tau$ if locally full, then the infimum operation $\wedge: X\times X\to X$ is jointly ${\text{\boldmath{$\tau$}}}$-continuous iff it is separately 
${\text{\boldmath{$\tau$}}}$-continuous}. 
\end{enumerate}
\end{remark}

\begin{theorem}\label{locally solid convergence}
Let $\tau$ be a linear topology on a Riesz space $X$. Then  the following conditions are equivalent$:$
\begin{enumerate}
\item[$(i)$] \ the ${\text{\boldmath{$\tau$}}}$-convergence is a linear full lattice convergence$;$
\item[$(ii)$] \ the topology $\tau$ is locally solid.
\end{enumerate}
In particular, on a locally full Riesz space $(X,\tau)$ the ${\text{\boldmath{$\tau$}}}$-convergence is lattice iff the topology $\tau$ is locally solid.
\end{theorem}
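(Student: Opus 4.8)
The plan is to obtain both implications by stitching together Lemma~\ref{full topology} and the Roberts--Namioka theorem (Theorem~\ref{Roberts -- Namioka  Theorem}); essentially no new argument is needed, only a correct bookkeeping of which hypotheses unlock which cited equivalence.

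First I would prove $(ii)\Rightarrow(i)$. Assume $\tau$ is locally solid. Since every solid $\tau$-neighbourhood of $0$ is full, $\tau$ is locally full, so Lemma~\ref{full topology} gives that the ${\text{\boldmath{$\tau$}}}$-convergence is linear and full. It then remains only to check it is lattice, i.e.\ $x_\alpha\convtau x\Rightarrow|x_\alpha|\convtau|x|$; this is immediate from $\bigl||x_\alpha|-|x|\bigr|\le|x_\alpha-x|$ together with the existence of a base of solid $\tau$-neighbourhoods of $0$, which is precisely the content of the implication $(viii)\Rightarrow(i)$ of Theorem~\ref{Roberts -- Namioka  Theorem}. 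Hence the ${\text{\boldmath{$\tau$}}}$-convergence is linear, full, and lattice, i.e.\ $(i)$ holds.

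Next I would prove $(i)\Rightarrow(ii)$. Suppose the ${\text{\boldmath{$\tau$}}}$-convergence is linear full lattice. Being linear and full, Lemma~\ref{full topology} forces $\tau$ to be locally full. Now invoke Theorem~\ref{Roberts -- Namioka  Theorem} under the standing hypothesis that $\tau$ is locally full: the statement that the ${\text{\boldmath{$\tau$}}}$-convergence is lattice is exactly condition $(i)$ there, which is equivalent to condition $(viii)$, namely that $\tau$ is locally solid. This closes the equivalence $(i)\Longleftrightarrow(ii)$.

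For the ``in particular'' clause, let $(X,\tau)$ be locally full. By Lemma~\ref{full topology} the ${\text{\boldmath{$\tau$}}}$-convergence is then automatically linear and full, so it is a lattice convergence if and only if it is a linear full lattice convergence, which by the equivalence just established happens if and only if $\tau$ is locally solid; equivalently, one may read this off directly as $(i)\Longleftrightarrow(viii)$ in Theorem~\ref{Roberts -- Namioka  Theorem}. The only point requiring care is that local fullness of $\tau$ is the hypothesis that makes the Roberts--Namioka equivalence with local solidity available, so in the $(i)\Rightarrow(ii)$ direction one must first extract local fullness via Lemma~\ref{full topology} before appealing to it; beyond this there is no real obstacle.
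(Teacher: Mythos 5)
Your proposal is correct and follows essentially the same route as the paper: both directions are obtained by combining Lemma~\ref{full topology} (to pass between local fullness and the convergence being linear and full) with the equivalence $(i)\Longleftrightarrow(viii)$ of Theorem~\ref{Roberts -- Namioka  Theorem}, and the ``in particular'' clause is read off in the same way. No gaps.
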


\begin{proof}
$(i)\Longrightarrow(ii)$ \  By Lemma \ref{full topology}, $\tau$ is locally full. Since the ${\text{\boldmath{$\tau$}}}$-convergence is lattice, 
Theorem \ref{Roberts -- Namioka  Theorem} implies that the topology $\tau$ is locally solid.

$(ii)\Longrightarrow(i)$ \ If $\tau$ is a locally solid topology then $\tau$ is locally full, e.g., by \cite[Exer.1(c), p.72]{AB}. 
Lemma \ref{full topology} implies that ${\text{\boldmath{$\tau$}}}$-convergence is linear and full. It follows then from Theorem \ref{Roberts -- Namioka  Theorem}$(i)$, that the 
${\text{\boldmath{$\tau$}}}$-convergence is also lattice.
\end{proof}

\section{$\mathbb{ac}$- and $\mathbb{uc}$-convergences on Riesz Spaces}

Here we study two important operations on full convergences on Riesz spaces which transform them to full lattice convergences.
We begin with a generalization of the convergence with respect to the almost weak topology on Riesz spaces. 
We recall the following definition (see for example \cite[Def.2.32]{AB}).

\begin{definition}\label{aw-convergence}
Let ${\mathcal F}$ be a non-empty family of Riesz seminorms on a Riesz space $X$. The {\em absolute weak topology} 
$|\sigma|(X, {\mathcal  F})$ on $X$ is the topology generated by the family  of seminorms
$$
    \rho_{\phi}(x):=\phi(|x|) \ \ \ \ (\phi\in{\mathcal  F}, x\in X).
$$
\end{definition}

Observe that, in Definition \ref{aw-convergence}, we have two full topological convergences, $\mathbb{c}$ and $\mathbb{c}'$ on $X$ 
with respect to the topologies $\sigma(X, {\mathcal  F})$ and $\sigma(X, {\mathcal  G})$, where ${\mathcal  G}=\{\rho_{\phi}:\phi\in{\mathcal  F}\}$.
It is well known that the topology $\sigma(X, {\mathcal  G})$ is locally solid. This observation motivates the following definition.

\begin{definition}\label{ac-convergence}
Let $\mathbb{c}$ be a convergence on a Riesz space $X$. The absolute $\mathbb{c}$-convergence on $X$ $($briefly $\mathbb{ac}$$)$ is defined by
$$
    x_\alpha\convac x\in X \ \  \text{if} \ \ |x_\alpha-x|\convc 0.
$$
\end{definition}

It should be clear that $\mathbb{c}'=\mathbb{ac}$ in the notations of the observation before Definition \ref{ac-convergence}. More generally,

\begin{theorem}\label{ac convergence is full lattice}
Let $\mathbb{c}$ be a linear  full convergence on a Riesz space $X$. Then $\mathbb{ac}$ is a linear full lattice convergence on $X$. Furthermore, 
\begin{enumerate} 
\item[$(i)$] \ $\mathbb{ac}\subseteq\mathbb{c}$$;$
\item[$(ii)$] \ $\mathbb{aac}=\mathbb{ac}$.
\item[$(iii)$] \ $\mathbb{c}\in T_1 \ \Longrightarrow \ \mathbb{ac}\in T_1$.
\end{enumerate}
\end{theorem}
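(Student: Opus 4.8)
The plan is to verify each property of $\mathbb{ac}$ in the order that makes each step feed the next. First I would check that $\mathbb{ac}$ is a convergence on $X$ in the sense of Definition \ref{convergence}: for a constant net $B\equiv b$ we have $|b-b|=0\convc 0$ by Definition \ref{convergence}$(a)$; closure under subnets follows because if $(x_{\alpha_\beta})$ is a subnet of $(x_\alpha)$ then $(|x_{\alpha_\beta}-x|)$ is a subnet of $(|x_\alpha-x|)$, and $\mathbb{c}$ is closed under subnets; the tail condition $(c)$ transfers likewise. Next I would establish $(i)$, $\mathbb{ac}\subseteq\mathbb{c}$, which is the structurally cleanest statement and which I will reuse: if $x_\alpha\convac x$ then $|x_\alpha-x|\convc 0$, and since $0\le(x_\alpha-x)^{\pm}\le|x_\alpha-x|$ and $\mathbb{c}$ is full, both $(x_\alpha-x)^{\pm}\convc 0$; additivity (which linear implies) gives $x_\alpha-x\convc 0$, and then $x_\alpha\convc x$ by linearity. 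This is essentially the argument already used in Corollary \ref{fc=c for full lattice c}.

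Then I would prove linearity of $\mathbb{ac}$. For additivity: if $|x_\alpha-x|\convc 0$ and $|y_\beta-y|\convc 0$ then $|x_\alpha-x|+|y_\beta-y|\convc 0$ by linearity of $\mathbb{c}$, and since $|(x_\alpha\pm y_\beta)-(x\pm y)|\le|x_\alpha-x|+|y_\beta-y|$ and $\mathbb{c}$ is full, $(x_\alpha\pm y_\beta)\convac x\pm y$. For scalar multiplication: mimic the estimate $(3)$ from the proof of Theorem \ref{fullification}, namely for $|r_\gamma|\le|r|+1$ eventually, $|r_\gamma x_\alpha-rx|\le(|r|+2)|x_\alpha-x|+|r_\gamma-r||x|\convc 0$ by linearity of $\mathbb{c}$, then pass to the tail via Definition \ref{convergence}$(c)$. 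Fullness of $\mathbb{ac}$ is immediate: if $x_\alpha\convac 0$ and $0\le y_\alpha\le x_\alpha$ then $|y_\alpha|=y_\alpha\le x_\alpha=|x_\alpha|\convc 0$, so by fullness of $\mathbb{c}$, $|y_\alpha|\convc 0$, i.e. $y_\alpha\convac 0$. For the lattice property, by Proposition \ref{lattice in full} it suffices to check that $x_\alpha\convac 0$ implies $|x_\alpha|\convac 0$; but $|x_\alpha|\convac 0$ means $\big||x_\alpha|\big|=|x_\alpha|\convc 0$, which is exactly the hypothesis $x_\alpha\convac 0$, so there is nothing to prove. Thus $\mathbb{ac}$ is a linear full lattice convergence.

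For $(ii)$, $\mathbb{aac}=\mathbb{ac}$: applying $(i)$ to the linear full convergence $\mathbb{ac}$ (already shown linear and full) gives $\mathbb{aac}\subseteq\mathbb{ac}$; conversely, $x_\alpha\convac x$ means $|x_\alpha-x|\convc 0$, and since $\mathbb{ac}\subseteq\mathbb{c}$ would go the wrong way, I instead argue directly: $x_\alpha\convaac x$ means $|x_\alpha-x|\convac 0$, i.e. $\big||x_\alpha-x|\big|=|x_\alpha-x|\convc 0$, which is literally the definition of $x_\alpha\convac x$. So $\mathbb{aac}=\mathbb{ac}$ follows from the observation that $\mathbb{ac}$ applied to a net of the form $|x_\alpha-x|$ unpacks to the same condition. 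For $(iii)$, suppose $\mathbb{c}\in T_1$; by Lemma \ref{T_1 for linear conv} (valid since $\mathbb{ac}$ is additive), it is enough to show no constant net $A\equiv a$ $\mathbb{ac}$-converges to $b\ne a$: if it did, then $|a-b|\convc 0$ as a constant net, contradicting $\mathbb{c}\in T_1$ since also $0\convc 0$.

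I do not anticipate a genuine obstacle here; the only point requiring mild care is keeping straight which of the three modifiers (linear, full, lattice) is being used at each inequality, and ensuring that the scalar-multiplication continuity is handled by passing to a tail exactly as in $(3)$–$(4)$ of Theorem \ref{fullification} rather than attempting a uniform estimate over all of $\Gamma$. The lattice property is the step that looks like it should be hard but in fact collapses to a tautology via Proposition \ref{lattice in full}, so the real content is concentrated in $(i)$ and in the linearity verification, both of which reuse machinery already deployed in the excerpt.
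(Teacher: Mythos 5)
Your proposal is correct and follows essentially the same route as the paper's proof: the same inequalities for additivity and scalar multiplication (with the tail argument), fullness via $0\le y_\alpha\le x_\alpha=|x_\alpha|$, the lattice property collapsing to a tautology through Proposition \ref{lattice in full}, and $(i)$ via fullness applied to $(x_\alpha-x)^{\pm}$. The only cosmetic difference is in $(iii)$, where the paper simply invokes the inclusion $\mathbb{ac}\subseteq\mathbb{c}$ together with the second part of Lemma \ref{T_1 for linear conv}, while you argue via the constant-net characterization; both are valid.
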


\begin{proof}
Clearly $\mathbb{ac}$ is a convergence on $X$. In order to show that $\mathbb{ac}$ is linear, let $(x_\alpha)_{\alpha\in A}\convac x$, $(y_\beta)_{\beta\in B}\convac y$, 
and $(r_\gamma)_{\gamma\in\Gamma}\to r\in\mathbb{R}$ in the standard topology on $\mathbb{R}$. Let $\gamma_0\in\Gamma$ be such that 
$|r_\gamma|\le|r|+1$ for all $\gamma\ge\gamma_0$. For every $\alpha\in A$ and $\beta\in B$,
$|x_\alpha+y_\beta-(x+y)|\le|x_\alpha-x|+|y_\beta-y|$. In view of Definition \ref{linear convergence}$(a)$ and Definition \ref{top+full+lattice convergence}$(b)$, 
it follows $(|x_\alpha+y_\beta-(x+y)|)_{(\alpha,\beta)\in A\times B}\convc0$, which means $(x_\alpha+y_\beta)_{(\alpha,\beta)\in A\times B}\convac x+y$.
Similarly, for every $\gamma\ge\gamma_0$ and $\alpha\in A$,
$$
  |r_\gamma x_\alpha-rx|\le|r_\gamma||x_\alpha-x|+|r_\gamma x -rx|\le(|r|+2)(|x_\alpha-x|)+|r_\gamma -r||x|.
$$
This inequality implies $(|r_\gamma x_\alpha - rx|)_{\gamma\in\Gamma;\gamma\ge\gamma_0;\alpha\in A}\convc 0$
by Definition \ref{linear convergence}$(b)$ and Definition \ref{top+full+lattice convergence}$(b)$. By Definition \ref{convergence}$(c)$, 
$(|r_\gamma x_\alpha - rx|)_{\gamma\in\Gamma;\alpha\in A}\convc 0$, which shows $(r_\gamma x_\alpha)_{\gamma\in\Gamma;\alpha\in A}\convac rx$. 
Therefore, $\mathbb{as}$ satisfies Definition \ref{linear convergence} and hence is a linear convergence on $X$.

Suppose $0\le y_\alpha\le x_\alpha\convac 0$. Then $0\le y_\alpha\le x_\alpha=|x_\alpha|\convc 0$. Since $\mathbb{c}$ is full then 
$|y_\alpha|=y_\alpha\convc 0$, which means $y_\alpha\convac 0$ and hence the convergence $\mathbb{ac}$ is full.
By Proposition \ref{lattice in full}, $\mathbb{ac}$ is a lattice convergence since $x_\alpha\convac 0$ trivially implies $|x_\alpha|\convac 0$.

$(i)$ Let $x_\alpha\convac x\in X$. Then $|x_\alpha-x|\convc 0$. Since $\mathbb{c}$ is full and $0\le(x_\alpha-x)^{\pm}\le|x_\alpha-x|$
then $(x_\alpha-x)=(x_\alpha-x)^++(x_\alpha-x)^-\convc 0$, and hence $x_\alpha\convc x$. We have shown $\mathbb{ac}\subseteq\mathbb{c}$.

$(ii)$ The formula $\mathbb{aac}=\mathbb{ac}$ follows by applying Definition \ref{ac-convergence} to $\mathbb{ac}$ which is already proved to be a full convergence.

$(ii)$ Let $\mathbb{c}\in T_1$. Then the inclusion $\mathbb{ac}\subseteq\mathbb{c}$ implies $\mathbb{ac}\in T_1$.
\end{proof}

Let $Y$ be a Riesz subspace of a Riesz space $X$. We remind the reader that $Y$ is called {\em order dense} in $X$ if, for every $0\ne x\in X_+$, there exists $y\in Y$ satisfying $0<y\le x$;
$Y$ is called {\em regular} in $X$ if $y_\alpha\downarrow 0$ in $Y$ implies $y_\alpha\downarrow 0$ in $X$. It is easy to see that order ideals and order dense Riesz subspaces 
of a Riesz space $X$ are regular in $X$. 

The following definition generalizes the concept of unbounded norm convergence with respect to an order ideal, 
which was introduced and studied in \cite{KLT}, to arbitrary  convergences.

\begin{definition}\label{uIc convergence}
Let $I$ be an order ideal in a Riesz space $X$ and $\mathbb{c}$ be a convergence on $X$. The $\mathbb{u}_I\mathbb{c}$-convergence on $X$ is defined by
$$
  x_\alpha\convuIc x \ \text{if} \ \ u\wedge|x_\alpha-x|\convc 0 \ \ \text{for} \ \ \text{all} \ \ u\in I_+.
$$
\end{definition}

\begin{theorem}\label{uc convergence}
Let $\mathbb{c}$ be a linear full convergence on $X$ and $I$ an order ideal in $X$. Then $\mathbb{u}_I\mathbb{c}$ is a linear full lattice convergence on $X$. Furthermore, 
if  $\mathbb{c}$ is also lattice, then:
\begin{enumerate} 
\item[$(i)$] \ $\mathbb{c}\subseteq\mathbb{u}_I\mathbb{c}$$;$
\item[$(ii)$] \ $\mathbb{u}_I\mathbb{u}_I\mathbb{c}=\mathbb{u}_I\mathbb{c}$$;$
\item[$(iii)$] \  $\mathbb{u}_I\mathbb{c}\in T_1$ iff $\mathbb{c}\in T_1$ and $I$ is order dense.
\end{enumerate}
\end{theorem}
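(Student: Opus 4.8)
\textbf{Proof plan for Theorem \ref{uc convergence}.}

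The plan is to follow the same template already established for $\mathbb{ac}$ in Theorem \ref{ac convergence is full lattice}, adjusting for the extra parameter $u\in I_+$. First I would verify that $\mathbb{u}_I\mathbb{c}$ is a convergence in the sense of Definition \ref{convergence}: conditions $(a)$ and $(c)$ are immediate from the corresponding properties of $\mathbb{c}$ applied to each net $u\wedge|x_\alpha-x|$, while $(b)$ follows because a subnet of $(x_\alpha)$ induces a subnet of each $(u\wedge|x_\alpha-x|)$. For linearity, the key pointwise estimates are the Riesz-space inequalities
$$
  u\wedge|x_\alpha+y_\beta-(x+y)|\le u\wedge(|x_\alpha-x|+|y_\beta-y|)\le u\wedge|x_\alpha-x|+u\wedge|y_\beta-y|
$$
and, for the scalar multiplication, for $|r_\gamma|\le|r|+1$,
$$
  u\wedge|r_\gamma x_\alpha-rx|\le u\wedge\big((|r|+2)|x_\alpha-x|\big)+u\wedge\big(|r_\gamma-r||x|\big)
  \le (|r|+2)\,\big(u\wedge|x_\alpha-x|\big)+u\wedge\big(|r_\gamma-r||x|\big),
$$
using that $u\wedge(\lambda w)\le\lambda(u\wedge w)$ for $\lambda\ge 1$ and $w\ge 0$; then Definition \ref{linear convergence}, fullness of $\mathbb{c}$, and Definition \ref{convergence}$(c)$ finish exactly as in Theorem \ref{ac convergence is full lattice}. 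Fullness of $\mathbb{u}_I\mathbb{c}$ is the observation that $0\le y_\alpha\le x_\alpha$ gives $0\le u\wedge y_\alpha\le u\wedge x_\alpha\convc 0$, and then lattice follows from Proposition \ref{lattice in full} since $x_\alpha\convuIc 0$ trivially yields $|x_\alpha|\convuIc 0$.

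For the "furthermore" part, assume $\mathbb{c}$ is also lattice. For $(i)$: if $x_\alpha\convc x$ then $|x_\alpha-x|\convc 0$ by the lattice property plus linearity, and since $0\le u\wedge|x_\alpha-x|\le|x_\alpha-x|$, fullness of $\mathbb{c}$ gives $u\wedge|x_\alpha-x|\convc 0$ for every $u\in I_+$, i.e. $x_\alpha\convuIc x$. For $(ii)$: since $\mathbb{u}_I\mathbb{c}$ is already shown to be a linear full lattice convergence, applying $(i)$ to it gives $\mathbb{u}_I\mathbb{c}\subseteq\mathbb{u}_I\mathbb{u}_I\mathbb{c}$; for the reverse inclusion, if $u\wedge|x_\alpha-x|\convuIc 0$ for all $u\in I_+$, then in particular, taking the same $u$ and using $u\wedge|u\wedge|x_\alpha-x||=u\wedge|x_\alpha-x|$ together with $(i)$ applied to $\mathbb{c}$, one recovers $u\wedge|x_\alpha-x|\convc 0$; here I would use that $u\wedge|x_\alpha-x|\in I_+$ and the idempotency $u\wedge(u\wedge w)=u\wedge w$ to collapse the double unboundedness.

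The main obstacle is $(iii)$, the $T_1$ criterion. One direction is easy: if $\mathbb{u}_I\mathbb{c}\in T_1$, then by $(i)$ and Lemma \ref{T_1 for linear conv} $\mathbb{c}$ is $T_1$; and if $I$ were not order dense there would be $0<x\in X_+$ with $u\wedge x=0$ for all $u\in I_+$ (the band generated by $I$ is not all of $X$, so pick $x$ in its disjoint complement), whence the constant net $x$ would $\mathbb{u}_I\mathbb{c}$-converge to $0$, contradicting $T_1$ via Lemma \ref{T_1 for linear conv}. The harder direction is the converse: assuming $\mathbb{c}\in T_1$ and $I$ order dense, show $\mathbb{u}_I\mathbb{c}\in T_1$. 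By Lemma \ref{T_1 for linear conv} it suffices to show that a constant net $a\equiv a$ with $a\convuIc b$ forces $a=b$; equivalently, $0\convuIc c$ forces $c=0$, i.e. $u\wedge|c|=0$ for all $u\in I_+$ (using that the constant net $u\wedge|c|$ $\mathbb{c}$-converges to $0$ and $\mathbb{c}\in T_1$ via Lemma \ref{T_1 for linear conv} again). Order density of $I$ then gives $|c|=0$: if $|c|>0$ there is $u\in I$ with $0<u\le|c|$, so $u=u\wedge|c|=0$, a contradiction. I expect the only delicate point is the clean extraction, in the non-$T_1$ failure case, of a nonzero element disjoint from all of $I_+$, which uses the standard fact that the complement of the band generated by an order ideal is trivial precisely when the ideal is order dense; I would cite this from \cite{AB} or prove it in one line via the band decomposition in $X^\delta$.
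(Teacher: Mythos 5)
Your proposal is correct and follows essentially the same route as the paper's proof: the same pointwise estimates $u\wedge|x_\alpha+y_\beta-(x+y)|\le u\wedge|x_\alpha-x|+u\wedge|y_\beta-y|$ and the scalar-multiplication bound for linearity, fullness and Proposition \ref{lattice in full} for the lattice property, the idempotency $u\wedge(u\wedge w)=u\wedge w$ for $(ii)$, and Lemma \ref{T_1 for linear conv} plus the characterization of order density of an ideal via triviality of its disjoint complement for $(iii)$. The only cosmetic differences are that the paper bounds the tail term by $|r_\gamma-r||x|$ directly rather than keeping $u\wedge(|r_\gamma-r||x|)$, and states the order-density criterion without passing through the generated band; neither affects correctness.
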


\begin{proof}
Clearly $\mathbb{u}_I\mathbb{c}$ is a convergence on $X$. Firstly, we show that the convergence $\mathbb{u}_I\mathbb{c}$ is linear.
Let $(x_\alpha)_{\alpha\in A}\convuIc x$, $(y_\beta)_{\beta\in B}\convuIc y$, and $(r_\gamma)_{\gamma\in\Gamma}\to r\in\mathbb{R}$
in the standard topology on $\mathbb{R}$. Let $\gamma_0\in\Gamma$ be such that $|r_\gamma|\le|r|+1$ for all $\gamma\ge\gamma_0$.
For every $u\in X_+$, $\alpha\in A$, and $\beta\in B$,
$$
  u\wedge|x_\alpha+y_\beta-(x+y)|\le u\wedge(|x_\alpha-x|+|y_\beta-y|)\le
$$
$$
  u\wedge|x_\alpha-x|+u\wedge|y_\beta-y|.
  \eqno(9)
$$
In view of Definition \ref{linear convergence}$(a)$ and Definition \ref{top+full+lattice convergence}$(b)$, the condition $(9)$ implies
$$
   (u\wedge|x_\alpha+y_\beta-(x+y)|)_{(\alpha,\beta)\in A\times B}\convc0 \ \ \ (\forall u\in I_+),
$$
which means $(x_\alpha+y_\beta)_{(\alpha,\beta)\in A\times B}\convuIc x+y$.
Similarly, for every $u\in X_+$, $\gamma\ge\gamma_0$, and $\alpha\in A$,
$$
  u\wedge|r_\gamma x_\alpha-rx|\le u\wedge(|r_\gamma x_\alpha-r_\gamma x|+|r_\gamma x -rx|)\le
$$
$$
  u\wedge|r_\gamma||x_\alpha-x|+u\wedge|r_\gamma x -rx|\le(|r|+2)(u\wedge|x_\alpha-x|)+|r_\gamma -r||x|.
  \eqno(10)
$$
By Definition \ref{linear convergence}$(b)$ and Definition \ref{top+full+lattice convergence}$(b)$, the condition $(10)$ implies
$(u\wedge|r_\gamma x_\alpha - rx|)_{\gamma\in\Gamma;\gamma\ge\gamma_0;\alpha\in A}\convc0$ for all $u\in I_+$, 
and hence, by Definition \ref{convergence}$(c)$, $(u\wedge|r_\gamma x_\alpha - rx|)_{\gamma\in\Gamma;\alpha\in A}\convc0$,
which ensures $(r_\gamma x_\alpha)_{\gamma\in\Gamma;\alpha\in A}\convuIc rx$. Therefore, $\mathbb{u}_I\mathbb{c}$ satisfies 
Definition \ref{linear convergence} and hence is a linear convergence on $X$.

Suppose $0\le y_\alpha\le x_\alpha$ for all $\alpha\in A$
and $x_\alpha\convuIc 0$. For every $u\in I_+$, we have $u\wedge|x_\alpha|\convc 0$ and, since $\mathbb{c}$ is full
and $u\wedge|y_\alpha|\le u\wedge|x_\alpha|$ for all $\alpha\in A$, then $u\wedge|y_\alpha|\convc 0$, which means
$y_\alpha\convuIc 0$ and hence the convergence $\mathbb{u}_I\mathbb{c}$ is full.
By Proposition \ref{lattice in full}, $\mathbb{u}_I\mathbb{c}$ is a lattice convergence since $x_\alpha\convuIc 0$ trivially implies $|x_\alpha|\convuIc 0$.

$(i)$  
Since $\mathbb{c}$ is a linear full lattice convergence then $x_\alpha\convc x$ implies $u\wedge|x_\alpha-x|\convc 0$ for all $u\in I_+$ and hence $x_\alpha\convuIc x$.
Therefore $\mathbb{c}\subseteq\mathbb{u}_I\mathbb{c}$.

$(ii)$ It was proved already that $\mathbb{u}_I\mathbb{c}$ is a linear full lattice convergence. Application of $(i)$ to $\mathbb{u}_I\mathbb{c}$ provides 
$\mathbb{u}_I\mathbb{c}\subseteq\mathbb{u}_I\mathbb{u}_I\mathbb{c}$. Now, let $x_\alpha\convuIuIc x$. So
$$
    u\wedge|x_\alpha-x|\convuIc 0 \ \ \text{for} \ \ \text{all} \ \ u\in I_+,
$$
and hence
$$
     (v\wedge u)\wedge|x_\alpha-x|=v\wedge(u\wedge|x_\alpha-x|)\convc 0 \ \ \text{for} \ \ \text{all} \ \ v,u\in I_+.
$$
But the later means exactly that
$$
     w\wedge|x_\alpha-x|\convc 0 \ \ \text{for} \ \ \text{all} \ \ w\in I_+,
$$
or $x_\alpha\convuIc x$. We have shown $\mathbb{u}_I\mathbb{u}_I\mathbb{c}\subseteq\mathbb{u}_I\mathbb{c}$ and hence 
$\mathbb{u}_I\mathbb{u}_I\mathbb{c}=\mathbb{u}_I\mathbb{c}$ as desired.

$(iii)$ 
Let $\mathbb{c}\in T_1$ and the order ideal $I$ be order dense in $X$. Since  $\mathbb{u}_I\mathbb{c}$ is linear, by Lemma \ref{T_1 for linear conv}, 
for proving $\mathbb{u}_I\mathbb{c}\in T_1$, we have to show that $X\ni a\convuIc b$ implies $a = b$.
Let $a\convuIc b$. Then $u\wedge|a|=u\wedge|a-b|\convc 0$  for every $u\in I_+$. Since $\mathbb{c}\in T_1$ then $u\wedge|a-b|=0$  for every $u\in I_+$, and,  
since $I$ is order dense, $a=b$.  Therefore we conclude $\mathbb{u}_I\mathbb{c}\in T_1$.

Let now $\mathbb{u}_I\mathbb{c}\in T_1$. By $(i)$,  $\mathbb{c}\subseteq\mathbb{u}_I\mathbb{c}\in T_1$, and hence $\mathbb{c}\in T_1$.
In order to show that $I$ is order dense, it is enough to show that if, for some $x\in X$,  $u \wedge | x | = 0$ for all $u \in I_+$, then $x=0$.
So, let  $x\in X$ and
$$
           u\wedge|x-0| = 0 \ \ \text{for} \ \ \text{all} \ \ u\in I_+.
$$
Hence
$$
           u\wedge|x-0| \convc 0 \ \ \text{for} \ \ \text{all} \ \ u\in I_+, 
$$
and by Definition \ref{uIc convergence}, $x \convuIc 0$. Since $\mathbb{u}_I\mathbb{c}\in T_1$, it follows $x=0$ as desired.
\end{proof}

If $I=X$, $\mathbb{u}_I\mathbb{c}$ is denoted by $\mathbb{u}\mathbb{c}$. By taking $I=X$, the next corollary follows from Theorem \ref{uc convergence}.

\begin{corollary}\label{T_1 uc} 
Let $\mathbb{c}$ be a linear full lattice convergence on $X$. Then $\mathbb{c}\in T_1$ iff $\mathbb{uc}\in T_1$.
\end{corollary}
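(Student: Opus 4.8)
The plan is to obtain this as a direct specialization of Theorem \ref{uc convergence}(iii) to the order ideal $I=X$. First I would record that $X$ is itself an order ideal in $X$, so the notation $\mathbb{uc}=\mathbb{u}_X\mathbb{c}$ introduced in the paragraph just before the corollary is legitimate, and that the hypotheses of Theorem \ref{uc convergence} are all in force: $\mathbb{c}$ is linear and full by assumption, and it is moreover lattice, which is precisely the extra requirement imposed on $\mathbb{c}$ for items $(i)$--$(iii)$ of that theorem. Thus Theorem \ref{uc convergence}(iii) applies verbatim with $I=X$.

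Next I would dispose of the single remaining ingredient, namely that $X$ is order dense in itself. This is immediate from the definition of order density recalled right before Definition \ref{uIc convergence}: for every $0\ne x\in X_+$ the element $y:=x\in X$ satisfies $0<y\le x$. Consequently the condition ``$\mathbb{c}\in T_1$ and $I$ is order dense'' occurring in Theorem \ref{uc convergence}(iii) collapses, upon taking $I=X$, to the single condition $\mathbb{c}\in T_1$.

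Putting these together, Theorem \ref{uc convergence}(iii) with $I=X$ gives exactly $\mathbb{uc}\in T_1$ iff $\mathbb{c}\in T_1$, which is the assertion. There is essentially no obstacle here: the statement is a genuine corollary, and the only things meriting a line of verification are the triviality that any Riesz space is order dense in itself and the bookkeeping observation that ``linear full lattice'' furnishes all three properties (linear, full, lattice) demanded by Theorem \ref{uc convergence}. Alternatively, should one prefer a self-contained argument, I would simply repeat the proof of the $I$-case: since $\mathbb{uc}$ is linear (Theorem \ref{uc convergence}), by Lemma \ref{T_1 for linear conv} it is $T_1$ iff $a\convuc b$ forces $a=b$; and $a\convuc b$ means $u\wedge|a-b|\convc 0$ for all $u\in X_+$, which under $\mathbb{c}\in T_1$ forces $u\wedge|a-b|=0$ for all such $u$, in particular (taking $u=|a-b|$) $|a-b|=0$; the converse direction uses $\mathbb{c}\subseteq\mathbb{uc}$ together with the second part of Lemma \ref{T_1 for linear conv}.
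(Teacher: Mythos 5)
Your proposal is correct and follows exactly the paper's route: the paper likewise obtains the corollary by specializing Theorem \ref{uc convergence}(iii) to $I=X$, with the (unstated but trivial) observation that $X$ is order dense in itself. Your extra self-contained verification via Lemma \ref{T_1 for linear conv} is sound but not needed.
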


By Proposition \ref{o-convergence is full lattice and T_1}, the $\mathbb{o}$-convergence and $\mathbb{p}$-convergence on an  Archi\-me\-de\-an Riesz space
are linear full lattice $T_1$ convergences. The ${\text{\boldmath{$\tau$}}}$-convergence on a Hausdorff locally solid Riesz space is  also a linear full lattice $T_1$ convergence.
Therefore, the following proposition generalizes \cite[Prop.3.15]{GTX}, \cite[Prop.3.2]{AEEM1}, and \cite[Prop.2.12]{Tay2}.

\begin{proposition}\label{Convergence and sublattices}
Let $\mathbb{c}$ be a linear full lattice $T_1$ convergence on an  Archi\-me\-de\-an  Riesz space $X$ such that $\mathbb{o}\subseteq\mathbb{c}$. 
Let $Y$ be a Riesz subspace of $X$. Then $Y$ is $\mathbb{uc}$-closed in $X$ iff $Y$ is $\mathbb{c}$-closed in $X$.   
\end{proposition}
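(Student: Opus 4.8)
The plan is to prove the two implications separately, the forward one being essentially immediate and the converse carrying all the content.

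For ``$Y$ $\mathbb{uc}$-closed $\Longrightarrow$ $Y$ $\mathbb{c}$-closed'': since $\mathbb{c}$ is a linear full lattice convergence, Theorem \ref{uc convergence}$(i)$ applied with $I=X$ gives $\mathbb{c}\subseteq\mathbb{uc}$, so any $\mathbb{c}$-convergent net in $Y$ is $\mathbb{uc}$-convergent and its limit already lies in $Y$. Nothing more is needed here. For the converse I would first assemble the toolkit for $\mathbb{uc}$: by Theorem \ref{uc convergence} it is a linear full lattice convergence, and since $\mathbb{c}\in T_1$ and $X$ is (trivially) order dense in itself, $\mathbb{uc}\in T_1$ (Corollary \ref{T_1 uc}); hence by Proposition \ref{inequalities} it preserves inequalities, and by Theorem \ref{double nets} it commutes with the maps $(\cdot)^{\pm}$. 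Given a net $Y\ni y_\alpha\convuc x$, applying $(\cdot)^{\pm}$ reduces the problem to positive nets, because $y_\alpha^{\pm}\in Y$ (sublattice), $y_\alpha^{\pm}\convuc x^{\pm}$, and $x=x^{+}-x^{-}$ with $Y$ a subspace; so it suffices to show that a $\mathbb{uc}$-limit of a positive net in $Y$ lies in $Y$.

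So suppose $0\le y_\alpha\in Y$ for all $\alpha$ and $y_\alpha\convuc x$; then $x\ge 0$ since $\mathbb{uc}$ preserves inequalities. The crux is a two-stage limiting argument, forced on us because there is no a priori reason for $x\wedge y_\beta$, let alone $x$, to belong to $Y$. \emph{Stage one:} fix $\beta$ and let $\alpha$ run. Combining the Birkhoff inequality $(7)$ (with $v=g=y_\beta$) with the order bound $|a-b|\le c$ valid for $0\le a,b\le c$, I would obtain
$$
  |y_\alpha\wedge y_\beta-x\wedge y_\beta|\le y_\beta\wedge|y_\alpha-x|,
$$
whose right-hand side $\mathbb{c}$-converges to $0$ because $y_\beta\in X_+$ and $y_\alpha\convuc x$; fullness of $\mathbb{c}$ together with the standard $(\cdot)^{\pm}$-plus-linearity trick (as in the proof of Corollary \ref{fc=c for full lattice c}) then yields $y_\alpha\wedge y_\beta\convc x\wedge y_\beta$, and $\mathbb{c}$-closedness of $Y$ forces $x\wedge y_\beta\in Y$. \emph{Stage two:} now $(x\wedge y_\beta)_\beta$ is a net in $Y$, and since $x,y_\beta\ge 0$ we have $|x\wedge y_\beta-x|=(x-y_\beta)^{+}\le x\wedge|x-y_\beta|\convc 0$, using $\mathbb{uc}$-convergence of $(y_\beta)$ at the test element $u=x\in X_+$; so again $x\wedge y_\beta\convc x$ and hence $x\in Y$. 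Feeding this back through the reduction gives $x^{+},x^{-}\in Y$ in the general case, so $x\in Y$, i.e.\ $Y$ is $\mathbb{uc}$-closed.

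The main obstacle is precisely the design of this double limit: a single pass fails because $\mathbb{uc}$-convergence of $(y_\alpha)$ does not by itself deliver $\mathbb{c}$-convergence of anything with limit outside $Y$. The resolution is to exploit that each fixed $y_\beta$ is an admissible element of $X_+$, which lets the inner limit land inside the $\mathbb{c}$-closed set $Y$, after which the outer limit is a soft consequence of the definition of $\mathbb{uc}$. The only genuinely computational point is finding the right estimate, namely intersecting Birkhoff's inequality with the crude order bound by $y_\beta$.
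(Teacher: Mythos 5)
Your proof is correct, and the forward implication matches the paper's. In the substantive direction ($\mathbb{c}$-closed $\Rightarrow$ $\mathbb{uc}$-closed) your first stage coincides with the paper's: both reduce to a positive net via Theorem \ref{double nets} and use the estimate $|y_\beta\wedge y_\alpha-y_\beta\wedge x|\le y_\beta\wedge|y_\alpha-x|$ together with fullness to conclude $x\wedge y_\beta\in Y$. Your second stage, however, is genuinely different and simpler. The paper at this point shows $x\in Y^{dd}$ (via disjoint complements and $T_1$), chooses a net $0\le z_\beta\uparrow x$ in the order ideal generated by $Y$, dominates it by $w_\beta\in Y_+$, and deduces $w_\beta\wedge x\convo x$, which is where the hypothesis $\mathbb{o}\subseteq\mathbb{c}$ enters. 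You instead test the $\mathbb{uc}$-convergence of the original net against the single element $u=x\in X_+$, and the identity $|x\wedge y_\beta-x|=(x-y_\beta)^+\le x\wedge|x-y_\beta|\convc 0$ (valid since $x,y_\beta\ge 0$) gives $Y\ni x\wedge y_\beta\convc x$ directly. I checked the inequality and the use of fullness and linearity; they are sound. What your route buys is a visibly shorter argument that never invokes $\mathbb{o}\subseteq\mathbb{c}$ (nor the band-theoretic machinery), so it in fact proves the proposition under weaker hypotheses. What the paper's route buys is a template that survives when the test element $x$ is not available -- e.g.\ for $\mathbb{u}_I\mathbb{c}$-closedness with a proper order ideal $I$ not containing $x$, as in the cited antecedents of this result -- whereas your trick is specific to $I=X$.
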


\begin{proof}
First of all, notice that $\mathbb{uc}$ is linear, full, lattice, and $T_1$ by Theorem~\ref{uc convergence}. 

For the necessity, let  $Y$ be $\mathbb{uc}$-closed in $X$, and let  $Y\ni y_\alpha\convc x\in X$. Since $\mathbb{c}$ is a linear full lattice convergence,
$u\wedge|y_\alpha-x|\convc 0$ for all $u\in X_+$, and hence $y_\alpha\convuc x$, showing $x\in Y$ because  $Y$ is $\mathbb{uc}$-closed in $X$. 

For the sufficiency, let $Y$ be $\mathbb{c}$-closed in $X$ and $Y\ni y_\alpha\convuc x\in X$. 
By Theorem \ref{double nets}, without loss of generality, we assume $y_\alpha\in Y_+$ for all $\alpha$. Since $y_\alpha\convuc x$ then
$z \wedge |y_\alpha - x | \convc 0$ for all $z \in X_+$, and hence, since $\mathbb{c}$ is linear and full, the inequality 
$|z \wedge y_\alpha - z \wedge x | \le  z \wedge |y_\alpha - x |$ implies $z \wedge y_\alpha \convc z \wedge x$.
In particular, $Y_+ \ni y \wedge y_\alpha \convc y \wedge x$ for any $y\in Y_+$,  and since $Y$  is $\mathbb{c}$-closed in $X$ then
$y \wedge x \in Y$ for each $y\in Y_+$. 

Observe that, for all $z\in Y^d_+$,  $0=y_\alpha\wedge z\convuc x\wedge z$ by Theorem \ref{double nets}. In view of $\mathbb{uc}\in T_1$, 
by Lemma \ref{T_1 for linear conv} we get $x\wedge z=0$ for all $z\in Y^d_+$, and hence $x\in Y^{dd}$. 
Since $Y^{dd}$ is the band generated by $Y$ in $X$, there exists a net $z_\beta$ in the order ideal $I_Y$ generated by $Y$ such that $0\le z_\beta\uparrow x$. 
Let $z_\beta\le w_\beta\in Y_+$ for every $\beta$. It follows from $x\ge w_\beta\wedge x\ge z_\beta\wedge x=z_\beta\uparrow x$
that $w_\beta\wedge x\convo x$ and hence $w_\beta\wedge x\convc x$  because  $\mathbb{o}\subseteq\mathbb{c}$. 
Thus, $w_\beta \wedge x \in Y$ for all $\beta$. The obtained together with $\mathbb{c}$-closeness of $Y$  imply  $x\in Y$, as required. 
\end{proof}

\begin{remark}\label{utau convergence}
Let $I$ an order ideal in a Riesz space $X$ equipped with linear locally full topology $\tau$.  
By Theorem $\ref{uc convergence}$, the $\mathbb{u}_I\mathbb{\tau}$-convergence on $X$ is linear full and lattice.
Let $\mathcal{T}$ be the family of all full $\tau$-neighborhoods of zero. The following observations are straightforward.
\begin{enumerate} 
\item[$(i)$] \ 
Let $B:=\{U_{u,V}\}_{u\in I_+, V\in\mathcal{T}}$, where 
$$
  U_{u,V}:=\{x\in X: u\wedge|x|\in V\} \ \ \ \ (u\in I_+, V\in\mathcal{T}).
$$
Then $B$ is a base at zero of some linear topology on $X$. To see this, let $u_1,u_2\in I_+$, $V_1,V_2\in\mathcal{T}$. 
Pick a full $V_3\in\mathcal{T}$ such that $V_3\subseteq V_1\cap V_2$. Then 
$$
   U_{u_1\vee u_2, V_3}\subseteq U_{u_1,V_3}\bigcap U_{u_2,V_3}\subseteq U_{u_1,V_1}\bigcap U_{u_2,V_2}.
$$
Since each $V\in\mathcal{T}$ is full, for every $x\in X$ there exists small enough $\varepsilon>0$ 
with $u\wedge|tx|\le|tx|\in V$ for all real  $t$, $|t|\le\varepsilon$. This shows that every set $U_{u,V}\in B$ is absorbing.
For proving that $B$ consists of balanced sets take $x$ in some $U_{u,V}\in B$ and let $|t| \le 1$. 
Since $0\le u\wedge|t x| \le u\wedge|x| \in V$ and $V$ is full, then $tx \in U_{u,V}$. 
So, every  $U_{u,V}\in B$ is balanced. It remains to show that for each 
$U_{u,V}\in B$ there exists $U_{u,W}\in B$ with $U_{u,W}+U_{u,W} \subseteq U_{u,V}$.
Take arbitrary $U_{u,V}\in B$. Since $\mathcal{T}$ is a zero base for topology $\tau$,
there exists $W\in\mathcal{T}$ with $W  + W \subseteq V$. If $x, y \in U_{u,W}$ then $u\wedge|x|,u\wedge|y|\in W$. 
Since $V$ is full, it follows from
$$
   u\wedge|x+y| \le u\wedge (|x|+|y|) \le  u\wedge|x| + u\wedge|y| \in W + W \subseteq V
$$ 
that $x+y \in U_{u,V}$. Therefore, $U_{u,W} + U_{u,W}  \subseteq U_{u,V}$, and hence 
$B$ is a zero base of a linear topology, which will be referred to as the topology $u\tau_I$ on $X$.
\item[$(ii)$] \ The $\mathbb{u}_I\mathbb{\tau}$-convergence on $X$ is  topological with respect to $u\tau_I$.
\item[$(iii)$] \ The linear topology $u\tau_I$ is locally solid. Indeed, take $W\in u\tau_I(0)$. Let $U_{u,V}\subseteq W$. 
Clearly $U_{u,V} \in u\tau_I(0)$. Next, we claim that $U_{u,V}\in B$ is solid. 
To see this, take $x\in U_{u,V}$ and let $|z|\le|x|$. Since $V$ is full then 
$$
  0\le u\wedge|z|\le u\wedge|x|\in V \Longrightarrow u\wedge|z|\in V.
$$ 
Thus $z\in U_{u,V}$, which shows that $U_{u,V}$ is solid. Therefore $W$ contains the solid $u\tau_I$-neighborhood $U_{u,V}$ of zero, 
and hence  the topology $u\tau_I$ is locally solid.  
\item[$(iv)$] \ It should be clear from $(i)$, $(ii)$, and $(iii)$, that the crucial ingredient in construction of ``unbounded topologies" is
local fullness of the original topology.
\item[$(v)$] \ In the recent literature $($see, e.g., $\cite{DEM2,EEG,Tay1,Tay2,AEEM1}$$)$, most of results related to unbounded topologies 
are stated for a locally solid Riesz space $(X,\tau)$. It seems to be meaningful to analyze these results in the reverse direction by relaxing
local solidness of the topology $\tau$ on a Riesz space $X$ to local fullness of $\tau$.   
\end{enumerate}
\end{remark}

\begin{definition}\label{min max convergences}
Let $C$ be a class of full $T_1$ convergences on a Riesz space $X$. A convergence $\mathbb{c}\in C$ is called$:$
\begin{enumerate} 
\item[$(a)$] \ {\em minimal in $C$} if, for any  $\mathbb{c}_1\in C$,
$$
    \mathbb{c}_1\subseteq\mathbb{c}\ \Longrightarrow \ \mathbb{c}_1=\mathbb{c};
$$
\item[$(b)$] \ {\em maximal in $C$} if, for any $\mathbb{c}_1\in C$,
$$
    \mathbb{c}\subseteq\mathbb{c}_1\ \Longrightarrow \ \mathbb{c}_1=\mathbb{c}.
$$
\end{enumerate}
\end{definition}

\begin{remark}\label{minimal}
Let $X$ be a Riesz space.
\begin{enumerate} 
\item[$(i)$] \ The full lattice convergence $\mathbb{c_d}$ from Example $\ref{c_a and c_d}$ is minimal in the class of all full $T_1$ convergences on $X$.
\item[$(ii)$] \ Let $I$ be an order dense order ideal in $X$ and $\mathbb{c}$ be  maximal in the class of all linear full lattice $T_1$ convergences on $X$. 
Then $\mathbb{u}_I\mathbb{c}=\mathbb{c}$, by Theorem $\ref{uc convergence}$.
\item[$(iii)$] \ By Remark $\ref{utau convergence}$$(ii)$, if $\mathbb{c}$ is the ${\text{\boldmath{$\tau$}}}$-convergence on a locally full Riesz space $(X,\tau)$, 
then the full lattice convergence $\mathbb{u}_I\mathbb{c}$ 
is also topological with respect to the locally solid topology $u\tau_I\subseteq\tau$ on $X$ $($cf. also $\cite{DEM2,Tay2}$$)$.  
\end{enumerate} 
\end{remark}

While the existence of a minimal full lattice $T_1$ convergence is trivial, the existence of a maximal full lattice $T_1$ convergence is rather deep and interesting issue,
e.g., in the class of all topological convergences with respect to locally full $T_1$ topologies on a Riesz space (see, for example \cite{Tay1,Tay2}).

It is worth mention that although the $\mathbb{o}$-convergence on a Riesz $X$ is not topological unless $\dim(X)<\infty$, 
the full lattice $\mathbb{uo}$-convergence on every discrete Riesz space $X$ is always topological \cite[Thm.2]{DEM3} with
respect to a locally solid topology on $X$.

\section{$\mathbb{mc}$-Convergence on Commutative $l$-Algebras}

A Riesz space $X$ is called an {\em $l$-algebra}, if $X$ is also an associative algebra whose positive cone $X_+$ is closed under the algebra multiplication. 
We recall the following definitions (cf. \cite{Pag,BH,Hu,BR,BC}).

\begin{definition}\label{various lattice algebras}
An $l$-algebra $X$ is called
\begin{enumerate}
\item[$(a)$] \ {\em $d$-algebra} whenever the multiplication is distributive with respect to the lattice operations, i.e., for all $u,x,y\in X_+$,
$$
  u\cdot(x\wedge y)=(u\cdot x)\wedge(u\cdot y) \ \ \text{and} \ \ (x\wedge y)\cdot u=(x\cdot u)\wedge(y\cdot u);
$$
\item[$(b)$] \ {\em almost $f$-algebra} if, for all $x,y\in X_+$, $x\wedge y=0\Longrightarrow x\cdot y=0$$;$
\item[$(c)$] \ {\em $f$-algebra} if, for all $u,x,y\in X_+$,
$$
  x\wedge y=0 \Longrightarrow \ (u\cdot x)\wedge y=(x\cdot u)\wedge y=0;
$$
\item[$(d)$] \ {\em semiprime} whenever the only nilpotent element in $X$ is $0$$;$
\item[$(e)$] \ {\em unital} if $X$ has a multiplicative unit.
\end{enumerate}
\end{definition}

\begin{example}\label{l-algebra $R$}
In the real field $\mathbb{R}$, any $l$-algebra multiplication $*$ is uniquely determined by $1*1$.
Indeed, for any $a,b\in\mathbb{R}$, $a*b=(a\cdot 1)*(b\cdot 1)=a\cdot b\cdot (1*1)$. In particular, any
one-dimensional $l$-algebra is commutative.
\end{example}

\begin{example}\label{l-algebra $R^D$}
In the Riesz space $\mathbb{R}^D$ of all $\mathbb{R}$-valued functions on a set $D$, any $f$-algebra multiplication $*$ is uniquely determined by
the function $\zeta(d):=[\mathbb{e}^d*\mathbb{e}^d](d)$, where $\mathbb{e}^d\in\mathbb{R}^D$ equals $1$ at $d$ and $0$ otherwise. 
Indeed, for any $a, b \in\mathbb{R}^D$, 
$$
  [a * b](d)=a(d)\mathbb{e}^d*b(d)\mathbb{e}^d=a(d)b(d)(\mathbb{e}^d*\mathbb{e}^d)=\zeta(d)a(d)b(d).
$$
Therefore, $a * b=\zeta\cdot a\cdot b$, where  $\zeta\cdot a\cdot b$ is the pointwise product of the functions $\zeta$, $a$, and $b$ from  $\mathbb{R}^D$.
In particular, $\mathbb{R}^D$ with the multiplication $*$ is unital iff $\mathbb{R}^D$ 
is semiprime iff $\zeta$ is a weak order unit in $\mathbb{R}^D$.
\end{example}

\begin{definition}\label{l-ideal}
A subspace $L$ of an $l$-algebra $X$ is called an {\em $l$-ideal} whenever $L$ is an order ideal in the Riesz space $X$ and a ring ideal in the algebra $X$.
\end{definition}

\begin{remark}\label{about l-ideals}
If $X$ is an Archimedean $f$-algebra then any $\mathbb{r}$-closed order ideal in $X$ is an $l$-ideal $($see $\cite[Thm.10.5]{Pag}$$)$.
\end{remark}

\begin{definition}\label{c-cont multiplication}
Let $\mathbb{c}$ be a linear convergence on a commutative $l$-algebra $X$. 
The algebra multiplication in $X$ is called {\em $\mathbb{c}$-continuous} whenever
$$ 
  x_\alpha\convc x \ \Longrightarrow \  y\cdot x_\alpha\convc y\cdot x
$$ 
for every net $x_\alpha$ in $X$ and all $x,y\in X$.
\end{definition}

\begin{example}\label{almost f-algebra in which multiplication is not 0-cont}
Let $\cal{U}$ be a free ultrafilter on $\mathbb{N}$. Recall that a real sequence $x_n$
{\em converges along} $\cal{U}$ to $x_0\in\mathbb{R}$ whenever $\{k\in\mathbb{N}: |x_k-x_0|\le\varepsilon\}\in\cal{U}$
for every $\varepsilon>0$. It is well known that any sequence $x=(x_n)_{n=1}^\infty\in\ell_\infty$ 
converges along $\cal{U}$ to its limit $x_{\cal{U}}:=\lim_{\cal{U}}x_n$. 

We define an operation $*$ in $\ell_\infty$ by $x*y:=(\lim_{\cal{U}}x_n)\cdot(\lim_{\cal{U}}y_n)\cdot\mathbb{1}$,
where $\mathbb{1}$ is a sequence of reals identically equal to $1$. It is straightforward to check that$:$
\begin{enumerate}
\item[$(a)$] \ $x*y:=(\lim_{\cal{U}}x_n\cdot y_n)\cdot\mathbb{1}$$;$
\item[$(b)$] \ $*$ is a commutative algebra multiplication in $\ell_\infty$$;$
\item[$(c)$] \ $(\ell_\infty,*)$ is an Archimedean almost $f$-algebra$;$
\item[$(d)$] \ $(\ell_\infty,*)$ is a $d$-algebra$;$
\item[$(e)$] \ $(\ell_\infty,*)$ is neither semiprime nor unital$;$
\item[$(f)$] \ $(\ell_\infty,*)$ is not an $f$-algebra$;$
\item[$(g)$] \ The algebra multiplication $*$ in the Banach lattice $\ell_\infty$ is norm continuous.
\end{enumerate}

The algebra multiplication $*$ in $\ell_\infty$ is not $\mathbb{o}$-continuous. To see this, for every $n\in\mathbb{N}$, take    
the indicator functions of $\{k\in\mathbb{N}:k\ge n\}$ denoting  $\mathbb{1}_n:=\mathbb{1}_{\{k\in\mathbb{N}:k\ge n\}}\in\ell_\infty$.
Clearly $\mathbb{1}_n\convo 0$ in $\ell_\infty$, yet $\mathbb{1}*\mathbb{1}_n=\mathbb{1}$ for 
all $n\in\mathbb{N}$.

Since $\mathbb{1}_n\convuo 0$, we also conclude that the multiplication $*$ in $\ell_\infty$ is not $\mathbb{uo}$-continuous. 
\end{example}

Let $X$ be an Archimedean Riesz space. Recall that an order bounded linear operator $\pi$ in $X$ is called an {\em orthomorphism}, 
if $|x|\wedge|y|=0$ in $X$ implies $|\pi x|\wedge|y|=0$. The set of all orthomorphisms in $X$ is denoted $\text{Orth}(X)$. By 
$\cite[Thms.9.3, 9.4, 15.1]{Pag}$ and $\cite[Thms.140.9, 140.10]{Za}$, $\text{Orth}(X)$ endowed with the operator 
multiplication is a commutative semi\-prime $f$-algebra in which the identity operator $I$ is a multiplicative unit $($cf. $\cite[Thm.4.1]{Hu}$$)$. 
Since any orthomorphism is $\mathbb{o}$-continuous $\cite[Thm.8.6]{Pag}$, the algebra multiplication in $\text{Orth}(X)$ is $\mathbb{o}$-continuous.
Furthermore, if $X$ is Dedekind complete, $\text{Orth}(X)$ coincides with the band generated by $I$ in $L_b(X)$. In particular, for a Dedekind complete $X$, 
$\text{Orth}(X)$ is a Dedekind complete unital $f$-algebra $\cite[Thm.15.4]{Pag}$.

\begin{remark}\label{various properties of l-algebras}
The following well known facts are used freely in the present paper.
\begin{enumerate}
\item[$(i)$] \ An $l$-algebra $X$ is semiprime iff $x^2=0\Rightarrow x=0$ in $X$.
\item[$(ii)$] \ An $f$-algebra $X$ is semiprime iff $x\cdot y=0\Rightarrow |x|\wedge|y|=0$ in $X$.
\item[$(iii)$] \ Any $f$-algebra is both $d$- and almost $f$-algebra.
\item[$(iv)$] \ Any Archimedean $f$-algebra $X$ with a multiplicative unit $e$ is semiprime, and
$e$ is a weak order unit in $X$.
\item[$(v)$] \ If $X$ is an Archimedean semiprime $l$-algebra then$:$
$$
  \text{X is an f-algebra iff X is an almost f-algebra iff X is a d-algebra}.
$$
\item[$(vi)$] \ If $X$ is an Archimedean $l$-algebra with a positive multiplicative unit then$:$
$$
  \text{X is an f-algebra iff  X is an almost f-algebra iff X is a d-algebra}.
$$
\item[$(vii)$] \ If $X$ is an Archimedean $f$-algebra then the algebra multiplication in $X$ is commutative and order continuous $($see $\cite{Pag,Hu}$$)$.
\item[$(viii)$] \ An Archimedean $f$-algebra $X$ is Riesz and algebra isomorphic to the $f$-algebra $\text{Orth}(X)$ iff $X$ is unital $\cite[Thm.4.3]{Hu}$.
Furthermore, the isomorphism $X\ni x\to\pi(x)\in\text{Orth}(X)$ is the multiplication by $x$, i.e.  $\pi(x)(y):=x\cdot y$.
\item[$(ix)$] \ In any $d$-algebra $X$, $|x\cdot y|=|x|\cdot|y|$ for all $x,y\in X$.
\end{enumerate}
\end{remark}

As Example \ref{s with noncontinuous multiplication} below shows, the algebra multiplication in a universally complete $f$-algebra 
need not to be $\mathbb{c}$-continuous with respect to a full lattice convergence. However, for some modes 
of full lattice convergences, the algebra multiplication in an arbitrary commutative $l$-algebra is always continuous.

\begin{proposition}
The algebra multiplication in any commutative $l$-algebra $X$ is $\mathbb{r}$-continuous.
\end{proposition}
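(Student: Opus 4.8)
The plan is to reduce $\mathbb{r}$-continuity of the multiplication to the elementary inequality
$$
   |y\cdot z|\le|y|\cdot|z| \qquad (y,z\in X),
$$
which holds in \emph{any} $l$-algebra, not just in $d$- or $f$-algebras. So the first step is to establish this inequality. Writing $y=y^+-y^-$ and $z=z^+-z^-$ and using distributivity of the algebra multiplication over addition, one gets
$$
   y\cdot z=(y^+\!\cdot z^+ + y^-\!\cdot z^-)-(y^+\!\cdot z^- + y^-\!\cdot z^+),
$$
a difference of two elements of $X_+$ because $X_+$ is closed under multiplication. Since $|a-b|\le a+b$ whenever $a,b\in X_+$, it follows that
$$
   |y\cdot z|\le y^+\!\cdot z^+ + y^-\!\cdot z^- + y^+\!\cdot z^- + y^-\!\cdot z^+=(y^++y^-)\cdot(z^++z^-)=|y|\cdot|z|.
$$

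The second step is to unwind the definition of relatively uniform convergence. Fix $y\in X$ and let $x_\alpha\convr x$ in $X$; choose $u\in X_+$ so that for each $n\in\mathbb{N}$ there is $\alpha_n$ with $|x_\alpha-x|\le\frac{1}{n}u$ for all $\alpha\ge\alpha_n$. Put $v:=|y|\cdot u\in X_+$. Using the inequality of the first step together with monotonicity of multiplication by a positive element (if $0\le a\le b$ and $c\ge0$ then $c\cdot b-c\cdot a=c\cdot(b-a)\ge0$), we obtain
$$
   |y\cdot x_\alpha-y\cdot x|=|y\cdot(x_\alpha-x)|\le|y|\cdot|x_\alpha-x|\le\frac{1}{n}\,|y|\cdot u=\frac{1}{n}v
$$
for all $\alpha\ge\alpha_n$. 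Hence $y\cdot x_\alpha\convr y\cdot x$, which is exactly the assertion of Definition \ref{c-cont multiplication} (and by commutativity $x_\alpha\cdot y=y\cdot x_\alpha$, so the one-sided and two-sided formulations coincide).

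The only point that is not pure bookkeeping is the inequality $|y\cdot z|\le|y|\cdot|z|$ in a general $l$-algebra; I expect that is the part worth writing out, the remainder being a direct translation of the $\mathbb{r}$-convergence definition. It is also worth noting that, in contrast with $\mathbb{o}$-continuity (which fails even in some Archimedean almost $f$-algebras, cf. Example \ref{almost f-algebra in which multiplication is not 0-cont}), this argument needs no Archimedean hypothesis.
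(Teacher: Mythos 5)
Your proof is correct and follows essentially the same route as the paper's: both reduce the claim to the estimate $|y\cdot(x_\alpha-x)|\le|y|\cdot|x_\alpha-x|\le\frac{1}{n}\,|y|\cdot u$ and read off $\mathbb{r}$-convergence with the regulator $|y|\cdot u$. The only difference is that you supply a proof of the inequality $|y\cdot z|\le|y|\cdot|z|$ for general $l$-algebras (via the decomposition into positive parts), which the paper simply uses without comment; that is a reasonable detail to include.
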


\begin{proof}
Let $x_\alpha\convr x$ in $X$ and $y\in X$. The convergence $x_\alpha\convr x$ implies the existence of $v\in X_+$ such that, for any $\varepsilon>0$, 
there is $\alpha_\varepsilon$ with $|x_\alpha-x|\le\varepsilon v$ for all $\alpha\ge\alpha_\varepsilon$. Take $u:=|y|\cdot v$. 
Then the following relations
$$
  |y\cdot x_\alpha- y\cdot x|\le|y|\cdot|x_\alpha-x|\le\varepsilon|y|\cdot v=\varepsilon u \ \ \ \ (\forall \alpha\ge\alpha_\varepsilon)
$$ 
imply  $y\cdot x_\alpha\convr y\cdot x$ in $X$, as required.
\end{proof}

\begin{lemma}\label{lattice convergence and c-cont multiplication}
Let $\mathbb{c}$ be a linear lattice convergence on a commutative $l$-algebra $X$. 
The multiplication in $X$ is $\mathbb{c}$-continuous iff 
$$ 
  x_\alpha\convc 0 \ \Rightarrow \  y\cdot x_\alpha\convc 0
$$
for every net $x_\alpha$ in $X_+$ and every $y\in X_+$. 
\end{lemma}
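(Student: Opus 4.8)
The statement is an "it suffices to check on a cone" reduction, and the natural strategy is to bootstrap from the restricted hypothesis to the full definition of $\mathbb{c}$-continuity using linearity, the lattice property, and the algebraic identity $|x\cdot y| = |x|\cdot|y|$ valid in commutative $l$-algebras (for a general $l$-algebra one at least has $|x\cdot y|\le|x|\cdot|y|$, which is all we need). The forward implication is trivial: $\mathbb{c}$-continuity as in Definition \ref{c-cont multiplication} applied to a net in $X_+$ with $x = 0$ and $y\in X_+$ gives exactly the displayed condition. So the whole content is the reverse implication.

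For the reverse implication, first I would upgrade from $y\in X_+$ to arbitrary $y\in X$ while keeping the net in $X_+$: given $x_\alpha\convc 0$ with $x_\alpha\in X_+$ and $y\in X$, write $y = y^+ - y^-$, apply the hypothesis to $y^+$ and to $y^-$ separately to get $y^+\cdot x_\alpha\convc 0$ and $y^-\cdot x_\alpha\convc 0$, and then use additivity (part of linearity) to conclude $y\cdot x_\alpha = y^+\cdot x_\alpha - y^-\cdot x_\alpha\convc 0$. Next I would drop the positivity of the net: given an arbitrary net $x_\alpha\convc 0$ in $X$ and $y\in X$, the lattice property gives $|x_\alpha|\convc 0$, and by the hypothesis (now for arbitrary $y$, net in $X_+$) we get $|y|\cdot|x_\alpha|\convc 0$; since $0\le|y\cdot x_\alpha| = |y|\cdot|x_\alpha|$ — or in a possibly non-commutative $l$-algebra $0\le|y\cdot x_\alpha|\le|y|\cdot|x_\alpha|$ — and $\mathbb{c}$ is full, we obtain $|y\cdot x_\alpha|\convc 0$, whence $y\cdot x_\alpha\convc 0$ by the lattice property again (or by Proposition \ref{lattice in full}, using $(y\cdot x_\alpha)^\pm\le|y\cdot x_\alpha|$ and additivity). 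Finally, to pass from "$x_\alpha\convc 0$" to "$x_\alpha\convc x$" in Definition \ref{c-cont multiplication}: given $x_\alpha\convc x$, additivity yields $x_\alpha - x\convc 0$, so by what was just shown $y\cdot(x_\alpha - x)\convc 0$, and additivity again gives $y\cdot x_\alpha = y\cdot(x_\alpha - x) + y\cdot x\convc y\cdot x$.

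The main subtlety — really the only place requiring care — is the step that feeds the lattice/fullness machinery: one must justify that a linear lattice convergence on a Riesz space is automatically full, or else invoke only the part of fullness actually needed. Fortunately the argument only uses the implication $0\le z_\alpha\le w_\alpha\convc 0\Rightarrow z_\alpha\convc 0$ in the two specific instances above, and in fact for a linear lattice convergence Proposition \ref{lattice in full} already packages exactly this: $x_\alpha\convc 0\Rightarrow|x_\alpha|\convc 0$ together with the sandwich on $(y\cdot x_\alpha)^\pm$ under $|y\cdot x_\alpha| = |y|\cdot|x_\alpha|$ lets me conclude $y\cdot x_\alpha\convc 0$ via additivity, so I do not strictly need fullness as a separate hypothesis — I only need that $\mathbb{c}$ is linear and lattice, which is assumed. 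I would therefore phrase the crucial line using $(y\cdot x_\alpha)^\pm\le|y\cdot x_\alpha| = |y|\cdot|x_\alpha|$ and then split via $\pm$, rather than appealing to fullness, to keep the proof self-contained under exactly the stated hypotheses.
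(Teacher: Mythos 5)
Your skeleton agrees with the paper's up to the point where the net is no longer positive, but there is a genuine gap there. To handle an arbitrary net $x_\alpha\convc 0$ you pass to $|x_\alpha|\convc 0$, obtain $|y|\cdot|x_\alpha|\convc 0$, and then try to descend to $y\cdot x_\alpha\convc 0$ via $0\le (y\cdot x_\alpha)^{\pm}\le|y\cdot x_\alpha|\le|y|\cdot|x_\alpha|$. Every one of these descents is an instance of the domination property $0\le z_\alpha\le w_\alpha\convc 0\Rightarrow z_\alpha\convc 0$, i.e.\ of \emph{fullness} --- and fullness is not among the hypotheses of this lemma, which assumes only that $\mathbb{c}$ is linear and lattice. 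Your attempted repair, that Proposition \ref{lattice in full} ``already packages exactly this,'' is a misreading: that proposition takes additivity \emph{and} fullness as hypotheses and characterizes when such a convergence is lattice; it does not derive the domination property from the lattice property. Indeed linear plus lattice does not imply full: the convergence $\mathbb{c_1}$ on the Riesz space $Y$ of Example \ref{piece wise polynomial} is a topological linear lattice convergence which is not full (parts $(c)$ and $(d)$ of that example). A second, more minor, slip: $|a\cdot b|=|a|\cdot|b|$ is a $d$-algebra identity (Remark \ref{various properties of l-algebras}$(ix)$), not one valid in every commutative $l$-algebra; only $|a\cdot b|\le|a|\cdot|b|$ comes for free, and, as you note, that would suffice --- but only if the domination step were available.

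The paper avoids all of this by decomposing the net rather than its modulus. Given $x_\alpha\convc x$, linearity gives $x_\alpha-x\convc 0$, and Theorem \ref{double nets}$(ii)$--$(iii)$ (valid for any linear lattice convergence, no fullness needed) gives $(x_\alpha-x)^{\pm}\convc 0$. These are nets in $X_+$, so the hypothesis applies directly to the four products $y^{\pm}\cdot(x_\alpha-x)^{\pm}$, each of which $\mathbb{c}$-converges to $0$; expanding $y\cdot(x_\alpha-x)=(y^+-y^-)\cdot((x_\alpha-x)^+-(x_\alpha-x)^-)$ and using additivity yields $y\cdot x_\alpha\convc y\cdot x$. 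If you replace your modulus step with this $\pm$-decomposition of the net (your first step, splitting $y=y^+-y^-$, is already the right move), the argument closes under exactly the stated hypotheses.
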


\begin{proof}
Only the sufficiency requires a proof. Let $x_\alpha\convc x$ in $X$ and $y\in X$.
Since $\mathbb{c}$ is a linear lattice convergence, we get, by the assumption,
$$
  y^+\cdot(x_\alpha-x)^{\pm}\convc 0 \ \ \text{and} \ \ y^-\cdot(x_\alpha-x)^{\pm}\convc 0.
$$
Then
$$
  y\cdot(x_\alpha-x)=(y^+-y^-)\cdot((x_\alpha-x)^+-(x_\alpha-x)^-)=
$$
$$	
	y^+(x_\alpha-x)^+ - y^+(x_\alpha-x)^-- 
	y^-(x_\alpha-x)^++y^-(x_\alpha-x)^-\convc 0.
$$
Therefore, $y\cdot x_\alpha\convc y\cdot x$, as required.
\end{proof}

\begin{lemma}\label{product convergence}
Let $\mathbb{c}$ be a linear full lattice convergence on a commutative $l$-algebra $X$ such that the multiplication in $X$ is $\mathbb{c}$-continuous.
If $(x_\alpha)_{\alpha\in A}\convc x$, $(y_\beta)_{\beta\in B}\convc y$, and at least one of these two nets is eventually order bounded, then 
$(x_\alpha\cdot y_\beta)_{(\alpha,\beta)\in A\times B}\convc x\cdot y$.
\end{lemma}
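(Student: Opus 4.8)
The plan is to split the error $x_\alpha\cdot y_\beta-x\cdot y$ into a piece controlled by the $\mathbb{c}$-continuity of the multiplication and a ``cross'' piece that the order bound lets us dominate by a net which $\mathbb{c}$-converges to $0$; fullness of $\mathbb{c}$ then disposes of that piece. First I would use symmetry (commutativity of $X$) to assume without loss of generality that $(y_\beta)_{\beta\in B}$ is the eventually order bounded net, say $|y_\beta|\le w$ for some $w\in X_+$ and all $\beta\ge\beta_0$. Fixing any $\alpha_0\in A$ and putting $D:=\{\xi\in A\times B:\xi\ge(\alpha_0,\beta_0)\}$, which is a tail of $A\times B$, Definition \ref{convergence}$(c)$ reduces the claim to proving $(x_\alpha\cdot y_\beta)_{(\alpha,\beta)\in D}\convc x\cdot y$; the point of passing to $D$ is that on $D$ the bound $|y_\beta|\le w$ holds for \emph{every} index.

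On $D$ I would write
$$
  x_\alpha\cdot y_\beta-x\cdot y=(x_\alpha-x)\cdot y_\beta+x\cdot(y_\beta-y),
$$
so that, by additivity of $\mathbb{c}$ (Definition \ref{linear convergence}$(c)$), it suffices to $\mathbb{c}$-converge each summand to $0$ over $D$. The second summand is immediate: $\mathbb{c}$-continuity of the multiplication (Definition \ref{c-cont multiplication}) gives $x\cdot y_\beta\convc x\cdot y$ over $B$, and since the coordinate projection $D\to B$, $(\alpha,\beta)\mapsto\beta$, is monotone and cofinal, $(x\cdot(y_\beta-y))_{(\alpha,\beta)\in D}$ is a subnet of $(x\cdot y_\beta-x\cdot y)_{\beta\in B}$, hence $\convc 0$ by Definition \ref{convergence}$(b)$. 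For the first summand, additivity and the lattice property of $\mathbb{c}$ give $|x_\alpha-x|\convc 0$, and $\mathbb{c}$-continuity of multiplication by the fixed $w$ then yields $|x_\alpha-x|\cdot w\convc 0$ over $A$, which transports to $(|x_\alpha-x|\cdot w)_{(\alpha,\beta)\in D}\convc 0$ via the (monotone, cofinal) projection $D\to A$. Using that in any $l$-algebra $|a\cdot b|\le|a|\cdot|b|$ (expand $a\cdot b$ through $a^{\pm},b^{\pm}$ and use that $X_+$ is closed under multiplication), together with the monotonicity $0\le b_1\le b_2\Rightarrow c\cdot b_1\le c\cdot b_2$ for $c\ge 0$, I obtain for $(\alpha,\beta)\in D$
$$
  0\le|(x_\alpha-x)\cdot y_\beta|\le|x_\alpha-x|\cdot|y_\beta|\le|x_\alpha-x|\cdot w,
$$
so fullness of $\mathbb{c}$ (Definition \ref{top+full+lattice convergence}$(b)$) gives $|(x_\alpha-x)\cdot y_\beta|\convc 0$ over $D$, whence $(x_\alpha-x)\cdot y_\beta\convc 0$ over $D$ by the additive–full argument used in the proof of Proposition \ref{lattice in full} (apply fullness to $0\le[(x_\alpha-x)\cdot y_\beta]^{\pm}\le|(x_\alpha-x)\cdot y_\beta|$ and take the difference). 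Adding the two summands and invoking additivity once more proves $(x_\alpha\cdot y_\beta)_{(\alpha,\beta)\in D}\convc x\cdot y$, and Definition \ref{convergence}$(c)$ upgrades this to all of $A\times B$.

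I do not expect a genuine obstacle here; the work is essentially bookkeeping. The two slightly fussy points are making the passage to the honest tail $D$ so that ``eventually order bounded'' becomes a uniform bound on the index set in play, and checking that the genuinely one-variable nets $|x_\alpha-x|\cdot w$ and $x\cdot y_\beta-x\cdot y$, once reindexed over $A\times B$ through the coordinate projections, remain $\mathbb{c}$-convergent — which is exactly Definition \ref{convergence}$(b)$ applied to those projections. The elementary $l$-algebra inequality $|a\cdot b|\le|a|\cdot|b|$ and the monotonicity of multiplication by positive elements both follow at once from closedness of $X_+$ under multiplication and distributivity.
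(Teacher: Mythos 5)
Your proof is correct and follows essentially the same route as the paper's: both rest on the add-and-subtract decomposition of $x_\alpha\cdot y_\beta-x\cdot y$, the inequality $|a\cdot b|\le|a|\cdot|b|$, the uniform order bound on a tail, $\mathbb{c}$-continuity of multiplication by a fixed element, and fullness. The only cosmetic difference is that the paper dominates $|x_\alpha\cdot y_\beta-x\cdot y|$ by the single net $v\cdot|y_\beta-y|+|y|\cdot|x_\alpha-x|$ and applies fullness once, whereas you handle the two summands separately and recombine by additivity at the end.
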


\begin{proof}
Without loss of generality, suppose $|x_\alpha|\le v$ for all $\alpha\ge\alpha_0$. Then, for all $\alpha\in A$, $\alpha\ge\alpha_0$, $\beta\in B$,
it holds that
$$
   |x_\alpha\cdot y_\beta-x\cdot y|=|x_\alpha\cdot y_\beta-x_\alpha\cdot  y+x_\alpha\cdot  y-x\cdot y|\le
$$
$$	
   |x_\alpha|\cdot|y_\beta-y|+|y|\cdot|x_\alpha-x|\le
   v\cdot|y_\beta-y|+|y|\cdot|x_\alpha-x|.
   \eqno(11)
$$
Pick any $\beta_0\in B$. Using that the multiplication in $X$ is $\mathbb{c}$-continuous, $\mathbb{c}$ is a linear lattice convergence, and Definition \ref{convergence}$(b)$,
we obtain 
$$
  (v\cdot|y_\beta-y|)_{\beta\in B;\beta\ge\beta_0}\convc 0 \ \ \text{and} \ \ (|y|\cdot|x_\alpha-x|)_{\alpha\in A;\alpha\ge\alpha_0}\convc 0.
$$	
Since the convergence $\mathbb{c}$ is full, $(11)$ implies
$$
    (|x_\alpha\cdot y_\beta-x\cdot y|)_{(\alpha,\beta)\in A\times B;\alpha\ge\alpha_0;\beta\ge\beta_0}\convc 0.
$$
Thus, we get $(x_\alpha\cdot y_\beta)_{(\alpha,\beta)\in A\times B;\alpha\ge\alpha_0;\beta\ge\beta_0}\convc x\cdot y$. By
Definition \ref{convergence}$(c)$, $(x_\alpha\cdot y_\beta)_{(\alpha,\beta)\in A\times B}\convc x\cdot y$, as required.
\end{proof}

The following definition introduces a modification of a linear convergence on a commutative $l$-algebra by using its algebra multiplication. 
This modification has significant differences and some minor similarities with the $\mathbb{u}_I\mathbb{c}$-convergence, which was well studied 
recently for several full lattice convergences (see \cite{GTX,AEEM1,Tay2} and the references therein).

\begin{definition}\label{mc convergence}
Let $\mathbb{c}$ be a convergence on a commutative $l$-algebra $X$, and $x_\alpha$ be a net in $X$. 
The net $x_\alpha$ {\em multiplicative} $\mathbb{c}$-converges to $x$ $($briefly $x_\alpha\convmc{x}$$)$ if 
$$
  u\cdot|x_\alpha-x|\convc{0} \ \ \ \ (\forall u\in X_+).
  \eqno(12)
$$	
\end{definition}

It is immediate to see that $\mathbb{mc}$ satisfies conditions $(a)$, $(b)$, and $(c)$ of Definition \ref{convergence}.
Therefore, $\mathbb{mc}$ is a convergence on $X$. As it will be shown below, in order to get that $\mathbb{mc}$
is linear some further conditions on $\mathbb{c}$ are required. In the cases when $\mathbb{c}$ is either the 
order convergence or the norm convergence, the notion of correspondent $\mathbb{mc}$-convergence was introduced 
and studied by the first author in \cite{Ay1,Ay2}.

It seems to make sense to study $(12)$ by using single (or repeated) left, right, both sides, etc., algebra multiplication in arbitrary $l$-algebras. 
However, in order to simplify the presentation, in Definition \ref{mc convergence}, we restrict ourselves to single multiplication in a commutative $l$-algebra. 

In the rest of the section, we study general properties of $\mathbb{mc}$-conver\-gence.

\begin{lemma}\label{mc net of nilpotents}
Let $\mathbb{c}$ be an additive $T_1$-convergence on an Archimedean $f$-algebra. 
Then the set $N(X)$ of all nilpotent elements in $X$ is $\mathbb{mc}$-closed.
\end{lemma}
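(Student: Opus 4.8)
The plan is to avoid any domination or squeezing argument (which is unavailable here, since $\mathbb{c}$ is only additive and $T_1$, not full) by producing \emph{exactly constant} test nets for the $\mathbb{mc}$-convergence and then invoking $T_1$-ness of $\mathbb{c}$. The algebraic input I would recall at the outset is that in an Archimedean $f$-algebra $X$ one has $N(X)=\{x\in X:x^{2}=0\}$ and $N(X)$ is an $l$-ideal (see, e.g., \cite{Pag}); in particular $N(X)$ is a linear subspace, so from $a^{2}=b^{2}=(a+b)^{2}=0$ and commutativity we get $2ab=0$, that is, $N(X)\cdot N(X)=\{0\}$. Beyond this I would use only that $X$ is a $d$-algebra, so $|yz|=|y||z|$ (Remark~\ref{various properties of l-algebras}), and that $x^{2}=|x|^{2}\ge 0$ in $f$-algebras.

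Now let $N(X)\ni a_{\alpha}\convmc x$. First I would show $a_{\beta}x^{2}=0$ for every index $\beta$. Fixing $\beta$ and using $a_{\beta}a_{\alpha}=0$ for all $\alpha$, one has
$$(a_{\beta}x)(x-a_{\alpha})=a_{\beta}x^{2}-a_{\beta}a_{\alpha}x=a_{\beta}x^{2}\qquad(\forall\alpha),$$
a quantity \emph{independent of $\alpha$}. Taking moduli and using $|yz|=|y||z|$ gives $|a_{\beta}x^{2}|=|a_{\beta}x|\cdot|a_{\alpha}-x|$ for all $\alpha$; the right-hand side is precisely the net $u\cdot|a_{\alpha}-x|$ for the choice $u:=|a_{\beta}x|\in X_{+}$, which $\mathbb{c}$-converges to $0$ by Definition~\ref{mc convergence}. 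Hence the constant net identically equal to $|a_{\beta}x^{2}|$ is $\mathbb{c}$-convergent to $0$; it is also $\mathbb{c}$-convergent to $|a_{\beta}x^{2}|$ by Definition~\ref{convergence}$(a)$, so $\mathbb{c}\in T_{1}$ forces $a_{\beta}x^{2}=0$.

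Knowing $a_{\beta}x^{2}=0$ for all $\beta$, I would finish by the same device applied to $x^{3}$. For each $\beta$,
$$x^{3}=x\cdot x^{2}=(x-a_{\beta})x^{2}+a_{\beta}x^{2}=(x-a_{\beta})x^{2},$$
so $|x^{3}|=|x-a_{\beta}|\cdot x^{2}=x^{2}\cdot|a_{\beta}-x|$ (using $x^{2}=|x|^{2}\ge 0$ and $|yz|=|y||z|$). This is the net $u\cdot|a_{\beta}-x|$ with $u:=x^{2}\in X_{+}$, hence $\mathbb{c}$-convergent to $0$; being the constant net $\equiv|x^{3}|$, $T_{1}$-ness of $\mathbb{c}$ yields $|x^{3}|=0$, i.e.\ $x^{3}=0$, so $x\in N(X)$.

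The only real obstacle is that the naive route --- bounding $0\le x^{2}$ above by a $\mathbb{c}$-null net built out of $|a_{\alpha}-x|$ and squeezing --- fails in the absence of fullness of $\mathbb{c}$; the identity $N(X)\cdot N(X)=\{0\}$ is exactly what turns the relevant dominations into equalities between a constant and an $\mathbb{mc}$-test net, after which $T_{1}$ alone does the job.
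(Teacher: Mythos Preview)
Your proof is correct, but it takes a longer route than the paper's. The paper exploits the stronger algebraic fact (de~Pagter, Prop.~10.2) that in an Archimedean $f$-algebra every nilpotent annihilates \emph{all} of $X$: thus $u\cdot x_\alpha=0$ for each $u\in X_+$, so the test net $u\cdot|x_\alpha-x|=|u\cdot x_\alpha-u\cdot x|=|u\cdot x|$ is already constant, and $T_1$-ness of $\mathbb{c}$ gives $u\cdot x=0$ for every $u$, hence $x^2=0$. You use only the weaker input $N(X)\cdot N(X)=\{0\}$ and compensate with a two-step bootstrap (first $a_\beta x^2=0$, then $x^3=0$). Your argument is therefore more self-contained and would survive in any commutative $d$-algebra where squares are positive and $N(X)$ is a subspace with $N(X)^2=0$; the paper's argument is shorter but leans on the specific structure theory of Archimedean $f$-algebras. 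Note, incidentally, that since you already cite that $N(X)$ is an $l$-ideal from \cite{Pag}, the annihilation property $N(X)\cdot X=\{0\}$ is available to you at the same cost, and would collapse your proof to the paper's one-liner.
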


\begin{proof}
We shall use in the proof the facts that every $f$-algebra is a $d$-algebra, and that $|x\cdot y|=|x|\cdot|y|$ in any $d$-algebra 
(cf. $(iii)$ and $(ix)$ of Remark \ref{various properties of l-algebras}).
Let $N(X)\ni x_\alpha\convmc x\in X$. Then, for every $u\in X_+$,
$$
  u\cdot|x_\alpha-x|=|u\cdot(x_\alpha-x)|=|u\cdot x_\alpha-u\cdot x| = |u\cdot x|\convc 0,
$$
where the final equality following from  \cite[Prop.10.2]{Pag}.
Since $\mathbb{c}\in T_1$, Lemma \ref{T_1 for linear conv} implies $|u\cdot x|=0$ and hence $u\cdot x=0$ for every $u\in X_+$. 
Then $y\cdot x=0$ for all $y\in X$. In particular, $x\cdot x=0$. Hence $x\in N(X)$.
\end{proof}

\begin{lemma}\label{mc implies c}
Let $\mathbb{c}$ be an additive full convergence on a commutative $l$-algebra $X$ with a positive algebraic unit.
Then $\mathbb{mc}\subseteq\mathbb{c}$.
\end{lemma}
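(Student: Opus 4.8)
The plan is to exploit the fact that the defining condition (12) of $\mathbb{mc}$-convergence quantifies over \emph{all} $u\in X_+$, so in particular we may instantiate it at the positive multiplicative unit $e$ of $X$; this single instance already collapses $\mathbb{mc}$-convergence to ordinary $\mathbb{c}$-convergence.

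Concretely, I would fix a net $(x_\alpha)_{\alpha\in A}$ in $X$ with $x_\alpha\convmc x$. By Definition \ref{mc convergence} we have $u\cdot|x_\alpha-x|\convc 0$ for every $u\in X_+$; taking $u=e$ and using $e\cdot y=y$ for all $y\in X$ gives $|x_\alpha-x|\convc 0$. Then, since $0\le(x_\alpha-x)^{\pm}\le|x_\alpha-x|$ and $\mathbb{c}$ is full (Definition \ref{top+full+lattice convergence}$(b)$), it follows that $(x_\alpha-x)^{+}\convc 0$ and $(x_\alpha-x)^{-}\convc 0$.

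Finally, additivity of $\mathbb{c}$ (Definition \ref{linear convergence}$(c)$) applied to these two nets yields $\mathbb{c}$-convergence of the $A\times A$-indexed net $\big((x_\alpha-x)^{+}-(x_\beta-x)^{-}\big)_{(\alpha,\beta)}$ to $0$; restricting to the diagonal, which is a subnet, and using Definition \ref{convergence}$(b)$, we get $x_\alpha-x=(x_\alpha-x)^{+}-(x_\alpha-x)^{-}\convc 0$, and then adding the constant net $x$ (additivity again) gives $x_\alpha\convc x$. Hence $\mathbb{mc}\subseteq\mathbb{c}$.

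The argument is short and essentially routine; the only point requiring a word of care is the passage from the product-indexed net furnished by additivity to the diagonally-indexed net, which the paper handles silently elsewhere (e.g.\ in the proof of Proposition \ref{lattice in full}) and which is legitimate by closedness under subnets. I do not anticipate a genuine obstacle here — positivity of the unit is used only to guarantee $e\in X_+$ so that it is an admissible choice of $u$ in (12), and commutativity is not actually needed beyond the standing hypotheses.
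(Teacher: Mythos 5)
Your proposal is correct and follows essentially the same route as the paper's proof: instantiate $u$ at the positive algebraic unit to get $|x_\alpha-x|\convc 0$, use fullness to obtain $(x_\alpha-x)^{\pm}\convc 0$, and reassemble by additivity. Your explicit remark about passing from the product-indexed net to the diagonal subnet is a point the paper leaves implicit, but it is handled exactly as you describe.
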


\begin{proof} 
Let $x_\alpha\convmc x$. Then $u\cdot|x_\alpha-x|\convc0$ for all $u\in X_+$. By taking $u$ a positive algebraic unit in $X$, 
$|x_\alpha-x|\convc0$. Since $\mathbb{c}$ is full and $|x_\alpha-x|\ge(x_\alpha-x)^{\pm}\ge 0$, it follows that $(x_\alpha-x)^{\pm}\convc 0$.
Since  $\mathbb{c}$ is additive,  $x_\alpha-x=(x_\alpha-x)^+-(x_\alpha-x)^-\convc 0$ and hence $x_\alpha\convc x$.
\end{proof}

\begin{example}\label{mw convergence}
Consider the  Banach $f$-algebra $L_\infty[0,1]$ with  pointwise multiplication. Clearly the function $e(t)\equiv 1$ 
is both a multiplicative and order unit in $L_\infty[0,1]$. By Example $\ref{full but not lattice}$, the weak convergence $\mathbb{w}$ 
on $L_\infty[0,1]$ is a  full yet non-lattice convergence. Since $X$ has positive algebraic unit, Lemma $\ref{mc implies c}$
implies $\mathbb{mw}\subseteq\mathbb{w}$.
\end{example}

The next two examples emphasize the importance of the assumption that $X$ has positive algebraic unit in Lemma \ref{mc implies c}.

\begin{example}\label{0}
Let $X$ by a non-Dedekind complete Riesz space. Let $\circ$ be the trivial algebra multiplication in $X$$:$
$x\circ y=0$ for all $x,y\in X$. The following observations are straightforward$:$
\begin{enumerate}
\item[$(a)$] \ $(X,\circ)$ is a non-unital $\mathbb{mo}$-complete $f$-algebra with $\mathbb{o}$-continuous algebra multiplication$;$
\item[$(b)$] \ $x_\alpha\convmo 0$ yields for any $\mathbb{o}$-divergent $\mathbb{o}$-Cauchy net $x_\alpha$ in $(X,\circ)$,
in particular $\mathbb{mo}\not\subseteq\mathbb{o}$.
\end{enumerate}
\end{example}

\begin{example}\label{c_00}
Consider the Dedekind complete non-unital $f$-algebra $c_{00}$ of all eventually zero real sequences with coordinatewise ordering and algebra multiplication.
Let $\mathbb{c}$ be the $\|\cdot\|_{\infty}$-convergence on $c_{00}\subset\ell_{\infty}$. Clearly$:$ $\mathbb{c}$ is a full lattice convergence$;$  
the algebra multiplication in $c_{00}$ is $\mathbb{c}$-continuous$;$ $\mathbb{c}\subseteq\mathbb{mc}$ in $c_{00}$ but not vice verse, 
$e_n\convmc 0$ in $c_{00}$ yet the sequence $e_n$ does not $\mathbb{c}$-converge.
\end{example}

The following example shows that  the statement of Lemma \ref{mc implies c} holds true also for some non-full convergences.

\begin{example}\label{PP[0,1]}
The Riesz space $PP[0,1]$ from Example $\ref{piece wise polynomial}$ is a commutative $f$-algebra under the pointwise multiplication $\cite[Ex.5.1(i); Ex.5.8]{Hu}$. 
The function $e(t)\equiv 1$ on $[0,1]$ is an order and multiplicative unit in $PP[0,1]$. The convergence $\mathbb{c_1}$ on  $PP[0,1]$ as in Example $\ref{piece wise polynomial}$ 
is not full. It is easy to see that $\mathbb{mc_1}=\mathbb{c_1}$.
\end{example}

\begin{lemma}\label{c implies mc}
Let $\mathbb{c}$ be an additive lattice convergence on a commutative $l$-algebra $X$ with a $\mathbb{c}$-continuous algebra multiplication.
Then $\mathbb{c}\subseteq\mathbb{mc}$.
\end{lemma}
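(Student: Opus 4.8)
The plan is to obtain $\mathbb{c}\subseteq\mathbb{mc}$ by applying the three hypotheses on $\mathbb{c}$ in succession: additivity to normalize the limit to zero, the lattice property to pass to absolute values, and $\mathbb{c}$-continuity of the multiplication to conclude. Concretely, suppose $(x_\alpha)_{\alpha\in A}\convc x$ in $X$; by Definition \ref{mc convergence} it suffices to show that $u\cdot|x_\alpha-x|\convc 0$ for every $u\in X_+$.

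First I would invoke additivity of $\mathbb{c}$. Taking the constant net $y_\beta\equiv x$ (which $\mathbb{c}$-converges to $x$ by Definition \ref{convergence}$(a)$) in Definition \ref{linear convergence}$(c)$ with the minus sign gives $(x_\alpha-x)_{\alpha\in A}\convc 0$. Next, since $\mathbb{c}$ is a lattice convergence, Definition \ref{top+full+lattice convergence}$(c)$ applied to the net $(x_\alpha-x)_{\alpha\in A}$ yields $(|x_\alpha-x|)_{\alpha\in A}\convc|0|=0$.

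Finally, fix $u\in X_+$. Applying $\mathbb{c}$-continuity of the algebra multiplication (Definition \ref{c-cont multiplication}) to the net $(|x_\alpha-x|)_{\alpha\in A}\convc 0$ with multiplier $y:=u$ gives $u\cdot|x_\alpha-x|\convc u\cdot 0=0$. As $u\in X_+$ was arbitrary, Definition \ref{mc convergence} shows $(x_\alpha)_{\alpha\in A}\convmc x$, and hence $\mathbb{c}\subseteq\mathbb{mc}$.

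There is no real obstacle in this argument; the two points that merely require attention are: (i) the passage from $x_\alpha-x\convc 0$ to $|x_\alpha-x|\convc 0$ must be justified by the \emph{lattice} property of $\mathbb{c}$, since fullness is not among the hypotheses (so Proposition \ref{lattice in full} is unavailable); and (ii) Definition \ref{c-cont multiplication} of $\mathbb{c}$-continuity is stated for every multiplier in $X$, so it is legitimate to use it with the positive element $u$.
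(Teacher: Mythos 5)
Your proof is correct and follows exactly the same three-step route as the paper's own argument: additivity gives $x_\alpha-x\convc 0$, the lattice property gives $|x_\alpha-x|\convc 0$, and $\mathbb{c}$-continuity of the multiplication gives $u\cdot|x_\alpha-x|\convc 0$ for each $u\in X_+$. Your two cautionary remarks (using the lattice property directly rather than Proposition \ref{lattice in full}, and applying Definition \ref{c-cont multiplication} with the multiplier $u$) are both sound and match the paper's implicit reasoning.
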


\begin{proof}
Let $x_\alpha\convc x$. Since $\mathbb{c}$ is additive, $x_\alpha-x \convc0$; and since $\mathbb{c}$ is lattice, $|x_\alpha-x|\convc0$.
By  $\mathbb{c}$-continuity of the multiplication, $u\cdot|x_\alpha-x|\convc0$ for all $u\in X_+$, and hence $x_\alpha\convmc x$.
\end{proof}

\begin{theorem}\label{c=mc}
Let $\mathbb{c}$ be an additive full lattice convergence on a commutative $l$-algebra $X$ with a positive algebraic unit and $\mathbb{c}$-continuous algebra multiplication.
Then the $\mathbb{mc}$-convergence coincides with the $\mathbb{c}$-convergence on $X$.
\end{theorem}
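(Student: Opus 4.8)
The plan is to deduce the equality $\mathbb{mc}=\mathbb{c}$ directly from the two one-sided inclusions just established, since the hypotheses of the theorem are precisely the union of the hypotheses of Lemma \ref{mc implies c} and Lemma \ref{c implies mc}. Indeed, $\mathbb{c}$ is in particular an additive full convergence on a commutative $l$-algebra with a positive algebraic unit, so Lemma \ref{mc implies c} gives $\mathbb{mc}\subseteq\mathbb{c}$; and $\mathbb{c}$ is in particular an additive lattice convergence on a commutative $l$-algebra whose algebra multiplication is $\mathbb{c}$-continuous, so Lemma \ref{c implies mc} gives $\mathbb{c}\subseteq\mathbb{mc}$. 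Combining the two inclusions yields $\mathbb{mc}=\mathbb{c}$ on $X$.

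For completeness I would briefly recall the mechanism behind each inclusion. The inclusion $\mathbb{c}\subseteq\mathbb{mc}$ uses that $x_\alpha\convc x$ forces $|x_\alpha-x|\convc 0$ (additivity together with the lattice property), after which $\mathbb{c}$-continuity of the multiplication propagates this to $u\cdot|x_\alpha-x|\convc 0$ for every $u\in X_+$, which is exactly $x_\alpha\convmc x$. The reverse inclusion $\mathbb{mc}\subseteq\mathbb{c}$ is where the positive algebraic unit $e$ is essential: evaluating $(12)$ at $u=e$ gives $|x_\alpha-x|\convc 0$, and then fullness (to squeeze $(x_\alpha-x)^{\pm}$ between $0$ and $|x_\alpha-x|$) together with additivity recovers $x_\alpha\convc x$. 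There is no genuine obstacle here; the theorem is a packaging of the two lemmas. The only point worth flagging is that dropping the positive-unit hypothesis breaks the inclusion $\mathbb{mc}\subseteq\mathbb{c}$, as witnessed by Examples \ref{0} and \ref{c_00}, so the hypotheses cannot be relaxed in the obvious way.
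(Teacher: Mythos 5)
Your proof is correct and is exactly the paper's argument: the theorem is obtained by combining Lemma \ref{mc implies c} (using additivity, fullness, and the positive algebraic unit to get $\mathbb{mc}\subseteq\mathbb{c}$) with Lemma \ref{c implies mc} (using additivity, the lattice property, and $\mathbb{c}$-continuity of the multiplication to get $\mathbb{c}\subseteq\mathbb{mc}$). The supplementary remarks on the mechanisms of the two lemmas and on the necessity of the unit hypothesis also match the paper's treatment.
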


\begin{proof} 
It follows from Lemma \ref{mc implies c} and Lemma \ref{c implies mc}.
\end{proof}

In general, the assumption of $\mathbb{c}$-continuity of the multiplication in $X$ can not be drooped in Theorem \ref{c=mc}.

\begin{example}\label{s with noncontinuous multiplication}
Consider the unital $f$-algebra $s$ of all real sequences with the coordinatewise ordering and algebra multiplication.
Let $\mathbb{c}$ be the eventually $\|\cdot\|_{\infty}$-bounded coordinatewise convergence, that is$:$ 
$((x_{n,\beta})_n^\infty)_\beta\convc 0$ in $X$ whenever $(x_{n,\beta})_n^\infty$ $\beta$-eventually lies in $\ell_\infty$ and
$(x_{n,\beta})_\beta\to 0$ for every $n\in\mathbb{N}$. Then $\mathbb{mc}\subseteq\mathbb{c}$ but not vice verse: the sequence $e_n$ 
$\mathbb{c}$-converges to $0$ yet $e_n$ does not $\mathbb{mc}$-converge anywhere. It should be noticed that the convergence $\mathbb{c}$ is not $\mathbb{o}$-continuous. 
Furthermore, the algebra multiplication in $s$ is not $\mathbb{c}$-continuous. Indeed, $e_n\convc 0$ yet $(n)_{n=1}^{\infty}\cdot e_n=ne_n$ 
does not $\mathbb{c}$-converge to $0$ since it is not eventually $\|\cdot\|_{\infty}$-bounded.  
\end{example}

\begin{theorem}\label{full lattice convergence to mc}
If $\mathbb{c}$ is a linear  full convergence on a commutative $l$-algebra $X$, then $\mathbb{mc}$ is a linear full lattice convergence on $X$.
\end{theorem}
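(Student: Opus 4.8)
The plan is to follow closely the pattern already used for Theorem \ref{ac convergence is full lattice} and Theorem \ref{uc convergence}, exploiting three elementary facts about an $l$-algebra $X$: the multiplication is bilinear, it is positively homogeneous ($u\cdot(\lambda v)=\lambda(u\cdot v)$ for $\lambda\ge 0$), and it is monotone in the sense that $u\in X_+$ and $0\le v\le w$ imply $u\cdot v\le u\cdot w$, since $u\cdot(w-v)\in X_+\cdot X_+\subseteq X_+$. It is already observed in the text that $\mathbb{mc}$ satisfies $(a)$, $(b)$, $(c)$ of Definition \ref{convergence}, hence is a convergence on $X$. Once we show $\mathbb{mc}$ is linear it is in particular additive, so by Proposition \ref{lattice in full} it then suffices to check three things: $(1)$ $\mathbb{mc}$ is linear; $(2)$ $\mathbb{mc}$ is full; $(3)$ $x_\alpha\convmc 0\Rightarrow|x_\alpha|\convmc 0$.

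Condition $(3)$ is immediate: by Definition \ref{mc convergence}, $|x_\alpha|\convmc 0$ unwinds to $u\cdot\big||x_\alpha|-0\big|=u\cdot|x_\alpha|\convc 0$ for all $u\in X_+$, which is verbatim the statement $x_\alpha\convmc 0$. For fullness, suppose $x_\alpha\convmc 0$ and $0\le y_\alpha\le x_\alpha$ for all $\alpha$. Fix $u\in X_+$. Since $x_\alpha\ge 0$ we have $u\cdot|x_\alpha|=u\cdot x_\alpha=u\cdot|x_\alpha-0|\convc 0$, and from $0\le u\cdot|y_\alpha|=u\cdot y_\alpha\le u\cdot x_\alpha=u\cdot|x_\alpha|\convc 0$ together with fullness of $\mathbb{c}$ we get $u\cdot|y_\alpha-0|\convc 0$; as $u\in X_+$ was arbitrary, $y_\alpha\convmc 0$, so $\mathbb{mc}$ is full.

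The bulk of the work is linearity. For the additive part, if $(x_\alpha)_{\alpha\in A}\convmc x$ and $(y_\beta)_{\beta\in B}\convmc y$ then, for every $u\in X_+$, the Riesz-space triangle inequality and monotonicity of the multiplication give
$$
  0\le u\cdot|x_\alpha+y_\beta-(x+y)|\le u\cdot|x_\alpha-x|+u\cdot|y_\beta-y| \ \ \ \ (\forall\alpha\in A,\beta\in B).
$$
By linearity of $\mathbb{c}$ the right-hand double net $\mathbb{c}$-converges to $0$, and fullness of $\mathbb{c}$ forces $\big(u\cdot|x_\alpha+y_\beta-(x+y)|\big)_{(\alpha,\beta)\in A\times B}\convc 0$; since $u\in X_+$ is arbitrary, $(x_\alpha+y_\beta)_{(\alpha,\beta)}\convmc x+y$. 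For scalar multiplication let $r_\gamma\to r$ in $\mathbb{R}$, choose $\gamma_0$ with $|r_\gamma|\le|r|+1$ for $\gamma\ge\gamma_0$, and use, for all $u\in X_+$, all $\gamma\ge\gamma_0$, and all $\alpha$,
$$
  0\le u\cdot|r_\gamma x_\alpha-rx|\le|r_\gamma|\big(u\cdot|x_\alpha-x|\big)+|r_\gamma-r|\big(u\cdot|x|\big)\le(|r|+2)\big(u\cdot|x_\alpha-x|\big)+|r_\gamma-r|\big(u\cdot|x|\big).
$$
The last expression $\mathbb{c}$-converges to $0$ by linearity of $\mathbb{c}$, so fullness of $\mathbb{c}$ yields $\mathbb{c}$-convergence of $u\cdot|r_\gamma x_\alpha-rx|$ to $0$ on the tail $\gamma\ge\gamma_0$, and Definition \ref{convergence}$(c)$ removes the restriction; since $u\in X_+$ is arbitrary, $(r_\gamma x_\alpha)\convmc rx$. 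Hence $\mathbb{mc}$ satisfies Definition \ref{linear convergence}, and combined with $(2)$ and $(3)$ it is a linear full lattice convergence.

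I do not expect a real obstacle here, since every step is a transcription of the $\mathbb{ac}$- and $\mathbb{u}_I\mathbb{c}$-arguments; the one place that needs care is justifying each passage to $u\cdot(\cdot)$, i.e. invoking $X_+\cdot X_+\subseteq X_+$ for monotonicity and bilinearity together with positive homogeneity for pulling out scalars. Note that commutativity of $X$ is not actually used in this particular statement.
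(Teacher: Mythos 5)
Your proposal is correct and follows essentially the same route as the paper: the same triangle-inequality and scalar-bound estimates for linearity (with the tail argument via Definition \ref{convergence}$(c)$), fullness via monotonicity of multiplication by $u\in X_+$ together with fullness of $\mathbb{c}$, and the lattice property via Proposition \ref{lattice in full} using the trivial implication $x_\alpha\convmc 0\Rightarrow|x_\alpha|\convmc 0$. Your side remarks (spelling out the fullness check that the paper dismisses as clear, and noting that commutativity is not actually needed here) are accurate but do not change the argument.
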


\begin{proof} 
Firstly, we show that $\mathbb{mc}$ is linear. Let 
$(x_\alpha)_{\alpha\in A}\convmc x\in X$, $(y_\beta)_{\beta\in B}\convmc y\in X$,  and $(r_\gamma)_{\gamma\in\Gamma}\to r$
in the standard topology on $\mathbb{R}$. Let $\gamma_0\in\Gamma$ be such that $|r_\gamma|\le|r|+1$ for all $\gamma\ge\gamma_0$.
Since, for all $u\in X_+$, $\alpha\in A$, and $\beta\in B$, we have
$$
  u\cdot|x_\alpha+y_\beta-(x+y)|\le u\cdot|x_\alpha-x|+u\cdot|y_\beta-y|,
  \eqno(13)
$$
and since $x_\alpha\convmc x$, $y_\beta\convmc y$, then $u\cdot|x_\alpha-x|+u\cdot|y_\beta-y|\convc 0$ 
for all $u\in X_+$. Since $\mathbb{c}$ is full, $(13)$ implies 
$$
  (x_\alpha+y_\beta-(x+y))_{(\alpha,\beta)\in A\times B}\convmc 0,
$$ 
and hence $(x_\alpha+y_\beta)_{(\alpha,\beta)\in A\times B}\convmc x+y$, which proves the $\mathbb{mc}$-continuity of the addition in $X$. 

Pick any $\alpha_0\in A$ and remark that, for all $u\in X_+$, $\gamma\ge\gamma_0$, and $\alpha\ge\alpha_0$, we have
$$
  u\cdot|r_\gamma x_\alpha-rx|\le u\cdot(|r_\gamma x_\alpha-r_\gamma x|+|r_\gamma x -rx|)=
$$
$$
  u\cdot|r_\gamma||x_\alpha-x|+u\cdot|r_\gamma x -rx|\le
  (|r|+1)u\cdot|x_\alpha-x|+|r_\gamma -r|u\cdot|x|.
  \eqno(14)
$$
Since $x_\alpha\convmc x$ and $r_\gamma\to r$, 
$$
  (|r|+1)u\cdot|x_\alpha-x|+|r_n-r|u\cdot|x|\convc 0 \ \ \ (\forall u\in X_+) \  .
  \eqno(15)
$$
Since $\mathbb{c}$ is full, $(14)$ and $(15)$ imply  
$$
  (u\cdot|r_\gamma x_\alpha-rx|)_{(\gamma_0,\alpha_0)\le(\gamma,\alpha)\in\Gamma\times A}\convc 0 \ \ \ (\forall u\in X_+,\gamma\ge\gamma_0).
$$
By the condition $(c)$ of Definition \ref{convergence}, $u\cdot|r_\gamma x_\alpha-rx|\convc 0$ and hence
$(r_\gamma x_\alpha-rx)_{(\gamma,\alpha)\in\Gamma\times A}\convmc 0$. In view of $\mathbb{mc}$-continuity of the addition in $X$,
this implies $(r_\gamma x_\alpha)_{(\gamma,\alpha)\in\Gamma\times A}\convmc rx$, which means that the scalar multiplication 
in $X$ is also $\mathbb{mc}$-continuous.

Thus, we proved that the convergence $\mathbb{mc}$ on $X$ is linear. Now, it is clear that the linear convergence $\mathbb{mc}$ satisfies the condition $(b)$ of 
Definition \ref{top+full+lattice convergence}, and hence $\mathbb{mc}$ is full. Proposition \ref{lattice in full} implies that $\mathbb{mc}$ is also a lattice convergence.
\end{proof}

In Example \ref{mw convergence}, $\mathbb{mw}\subseteq\mathbb{w}$. The inclusion is proper since 
$\mathbb{mw}$ is a lattice convergence by Theorem \ref{full lattice convergence to mc}, while $\mathbb{w}$ is not.

\begin{proposition}\label{mc on commutative d-algebras}
Let $\mathbb{c}$ be a linear full lattice convergence on a commutative $d$-algebra $X$.
For every net $x_\alpha$ in $X$, the following conditions are equivalent$:$\\
$(i)$ \ \ $x_\alpha\convmc 0$$;$\\
$(ii)$ \  $u\cdot x_\alpha\convc 0$ for every $u\in X_+$.
\end{proposition}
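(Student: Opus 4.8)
The plan is to reduce the equivalence to the modulus identity $|y\cdot z|=|y|\cdot|z|$ valid in any $d$-algebra (Remark \ref{various properties of l-algebras}$(ix)$), combined with the hypothesis that $\mathbb{c}$ is a linear full lattice convergence. First I would unwind Definition \ref{mc convergence} in the case $x=0$: the statement $x_\alpha\convmc 0$ means $u\cdot|x_\alpha|\convc 0$ for every $u\in X_+$. Since $u\ge 0$, the $d$-algebra identity gives $u\cdot|x_\alpha|=|u|\cdot|x_\alpha|=|u\cdot x_\alpha|$ for each $\alpha$, so condition $(i)$ is precisely the assertion that $|u\cdot x_\alpha|\convc 0$ for all $u\in X_+$. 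This reformulation is the only genuinely algebraic input; everything else is convergence bookkeeping.

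For $(ii)\Longrightarrow(i)$ it then suffices to recall that $\mathbb{c}$ is a lattice convergence: from $u\cdot x_\alpha\convc 0$ we get $|u\cdot x_\alpha|\convc 0$ for each $u\in X_+$, which is exactly the reformulated $(i)$. For $(i)\Longrightarrow(ii)$, fix $u\in X_+$. From $|u\cdot x_\alpha|\convc 0$ and the inequalities $0\le(u\cdot x_\alpha)^{\pm}\le|u\cdot x_\alpha|$, fullness of $\mathbb{c}$ yields $(u\cdot x_\alpha)^{+}\convc 0$ and $(u\cdot x_\alpha)^{-}\convc 0$; then additivity of $\mathbb{c}$ (available since a linear convergence is additive) gives $u\cdot x_\alpha=(u\cdot x_\alpha)^{+}-(u\cdot x_\alpha)^{-}\convc 0$. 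As $u\in X_+$ was arbitrary, $(ii)$ follows.

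I do not expect a real obstacle here: the argument is a short combination of the definitions, the $d$-algebra modulus identity, and the split-into-positive-and-negative-part trick already used in the proofs of Corollary \ref{fc=c for full lattice c} and Theorem \ref{full lattice convergence to mc}. The only place needing a moment of care is the recombination step in $(i)\Longrightarrow(ii)$, which relies on additivity rather than merely fullness; once that is noted the proof is essentially immediate, and in fact one could alternatively phrase $(i)\Longrightarrow(ii)$ as an instance of the $\mathbb{ac}$-type reasoning, but the direct version above is the most economical.
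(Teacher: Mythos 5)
Your proof is correct and follows essentially the same route as the paper: the $(ii)\Rightarrow(i)$ direction is identical (lattice property plus the $d$-algebra identity $u\cdot|x_\alpha|=|u\cdot x_\alpha|$), and your $(i)\Rightarrow(ii)$ uses the same split-into-positive-and-negative-parts trick, only applied to $u\cdot x_\alpha$ directly rather than, as the paper does, to $x_\alpha$ via Theorem \ref{full lattice convergence to mc}. Both variants are valid and rest on the same ingredients (fullness, the lattice property, and additivity of $\mathbb{c}$).
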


\begin{proof}
$(i)\Longrightarrow(ii)$\ Let $x_\alpha\convmc 0$ and $u\in X_+$. By Theorem \ref{full lattice convergence to mc},
$x_\alpha^{\pm}\convmc 0$ and so $u\cdot x_\alpha^{\pm}\convc 0$. Thus, $u\cdot x_\alpha=u\cdot x_\alpha^+-u\cdot x_\alpha^-\convc 0$.

$(ii)\Longrightarrow(i)$\ Let $u\in X_+$. By $(ii)$, it follows that  $|u\cdot x_\alpha|\convc 0$ because $\mathbb{c}$ is a lattice convergence. 
Due to the fact that $X$ is a $d$-algebra, $u\cdot|x_\alpha|=|u\cdot x_\alpha|\convc 0$. Since $u\in X_+$ is arbitrary, $x_\alpha\convmc 0$.
\end{proof}

\begin{lemma}\label{if c is full then multiplication is mc-continuous}
Let $\mathbb{c}$ be a linear full convergence on a commutative $l$-algebra $X$. Then the algebra multiplication in $X$ is $\mathbb{mc}$-continuous.
\end{lemma}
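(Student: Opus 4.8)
The plan is to reduce both $\mathbb{mc}$-assertions to the underlying convergence $\mathbb{c}$ and then squeeze the relevant net between $0$ and a $\mathbb{c}$-null net, invoking fullness of $\mathbb{c}$. First I would observe that, by Theorem \ref{full lattice convergence to mc}, $\mathbb{mc}$ is a linear (in fact, full lattice) convergence on $X$, so Definition \ref{c-cont multiplication} is applicable to $\mathbb{mc}$; thus it suffices to check that $x_\alpha \convmc x$ implies $y\cdot x_\alpha \convmc y\cdot x$ for every net $(x_\alpha)$ in $X$ and all $x,y\in X$, i.e. that $u\cdot|y\cdot x_\alpha - y\cdot x| \convc 0$ for each $u\in X_+$.

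The one algebraic ingredient is the inequality $|y\cdot z|\le|y|\cdot|z|$, valid in \emph{every} $l$-algebra (no $d$- or $f$-algebra hypothesis is needed): expanding $y\cdot z = y^+z^+ - y^+z^- - y^-z^+ + y^-z^-$ and using that $X_+$ is closed under the algebra multiplication, each of the four summands lies in $X_+$, whence $|y\cdot z|\le y^+z^+ + y^+z^- + y^-z^+ + y^-z^- = |y|\cdot|z|$. Since multiplication by a fixed positive element is order preserving (if $p,q\in X_+$ then $p\cdot q\in X_+$), applying this with $z=x_\alpha-x$, multiplying by $u\in X_+$, and using commutativity and associativity of the multiplication, gives
$$ 0\le u\cdot|y\cdot x_\alpha - y\cdot x| = u\cdot|y\cdot(x_\alpha-x)| \le u\cdot\bigl(|y|\cdot|x_\alpha-x|\bigr) = (u\cdot|y|)\cdot|x_\alpha-x|. $$

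Now I would set $w:=u\cdot|y|\in X_+$. Since $x_\alpha\convmc x$, the very definition of $\mathbb{mc}$ applied to the positive element $w$ yields $w\cdot|x_\alpha-x|\convc 0$. The displayed chain then traps $u\cdot|y\cdot x_\alpha - y\cdot x|$ between $0$ and the $\mathbb{c}$-null net $w\cdot|x_\alpha-x|$, so fullness of $\mathbb{c}$ (Definition \ref{top+full+lattice convergence}$(b)$) forces $u\cdot|y\cdot x_\alpha - y\cdot x|\convc 0$. As $u\in X_+$ was arbitrary, $y\cdot x_\alpha\convmc y\cdot x$, which is the required $\mathbb{mc}$-continuity. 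I do not anticipate a real obstacle here; the only subtlety is to resist assuming a $d$-algebra structure — both the estimate $|y\cdot z|\le|y|\cdot|z|$ and the order preservation under multiplication by positive elements follow solely from closedness of $X_+$ under multiplication.
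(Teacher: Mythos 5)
Your proof is correct and follows essentially the same route as the paper's: the inequality $u\cdot|y\cdot(x_\alpha-x)|\le (u\cdot|y|)\cdot|x_\alpha-x|$, the observation that the right-hand side is $\mathbb{c}$-null by the definition of $\mathbb{mc}$ applied to the positive element $u\cdot|y|$, and fullness of $\mathbb{c}$ to squeeze. The only (harmless) differences are that you supply an explicit justification of $|y\cdot z|\le|y|\cdot|z|$ in a general $l$-algebra, which the paper uses tacitly, and you read off the conclusion directly from Definition \ref{mc convergence} where the paper routes through linearity of $\mathbb{mc}$.
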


\begin{proof}
Let $x_\alpha\convmc x$ and $z\in X$. Since $\mathbb{c}$ is full and
$$
  u\cdot|z\cdot x_\alpha-z\cdot x|=u\cdot|z\cdot(x_\alpha-x)|\le u\cdot|z|\cdot|x_\alpha-x|\convc 0 \ \ \ \ (\forall u\in X_+), 
$$
it follows that  $z\cdot x_\alpha-z\cdot x\convmc 0$. Since $\mathbb{mc}$ is a linear convergence in view of Theorem \ref{full lattice convergence to mc},
we obtain $z\cdot x_\alpha\convmc z\cdot x$, as required.
\end{proof}

\begin{theorem}\label{product mc-convergence}
Let $\mathbb{c}$ be a linear full convergence on a commutative $l$-algebra $X$. If $(x_\alpha)_{\alpha\in A}\convmc x$, $(y_\beta)_{\beta\in B}\convmc y$
in $X$, and at least one of two nets is eventually order bounded then $(x_\alpha\cdot y_\beta)_{(\alpha,\beta)\in A\times B}\convmc x\cdot y$.
\end{theorem}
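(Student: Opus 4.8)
The plan is to obtain this as an immediate instance of Lemma \ref{product convergence}, applied not to $\mathbb{c}$ but to the modified convergence $\mathbb{mc}$. First I would record the two structural facts about $\mathbb{mc}$ that have already been established: by Theorem \ref{full lattice convergence to mc}, since $\mathbb{c}$ is a linear full convergence on the commutative $l$-algebra $X$, the convergence $\mathbb{mc}$ is a linear full lattice convergence on $X$; and by Lemma \ref{if c is full then multiplication is mc-continuous}, the algebra multiplication in $X$ is $\mathbb{mc}$-continuous. Thus $\mathbb{mc}$ meets verbatim the hypotheses imposed on the convergence in Lemma \ref{product convergence}.

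With this in hand, the proof is one line: given $(x_\alpha)_{\alpha\in A}\convmc x$, $(y_\beta)_{\beta\in B}\convmc y$ in $X$ with at least one of the two nets eventually order bounded, Lemma \ref{product convergence} (read with $\mathbb{mc}$ in the role of ``$\mathbb{c}$'') yields $(x_\alpha\cdot y_\beta)_{(\alpha,\beta)\in A\times B}\convmc x\cdot y$, which is exactly the assertion.

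If instead one prefers a self-contained argument that does not cite Lemma \ref{product convergence}, one simply repeats its proof at the level of $\mathbb{mc}$: assuming without loss of generality that $|x_\alpha|\le v$ for all $\alpha\ge\alpha_0$, one uses the $l$-algebra inequality $|x_\alpha\cdot y_\beta-x\cdot y|\le|x_\alpha|\cdot|y_\beta-y|+|y|\cdot|x_\alpha-x|$, multiplies through by an arbitrary $u\in X_+$, estimates $u\cdot|x_\alpha|\cdot|y_\beta-y|\le(u\cdot v)\cdot|y_\beta-y|$, and then invokes $y_\beta\convmc y$ with the positive element $u\cdot v$ and $x_\alpha\convmc x$ with the positive element $u\cdot|y|$, together with additivity and fullness of $\mathbb{c}$, to get $u\cdot|x_\alpha\cdot y_\beta-x\cdot y|\convc 0$ along the tail $\alpha\ge\alpha_0$; since $u\in X_+$ was arbitrary this gives $\mathbb{mc}$-convergence of the double net along that tail, and Definition \ref{convergence}$(c)$ promotes it to the full index set.

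There is essentially no genuine obstacle here; the only points requiring a little care are that $\mathbb{mc}$-convergence must be checked for every $u\in X_+$ (so the factors $u\cdot v$ and $u\cdot|y|$ must both be recognized as lying in $X_+$), and that the conclusion is first obtained on a tail of the double net and only afterwards transferred to the whole net via Definition \ref{convergence}$(c)$. I expect the reduction to Lemma \ref{product convergence} to be the intended and cleanest route.
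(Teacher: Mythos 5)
Your proposal is correct and follows exactly the paper's own proof: cite Theorem \ref{full lattice convergence to mc} to see that $\mathbb{mc}$ is a linear full lattice convergence, cite Lemma \ref{if c is full then multiplication is mc-continuous} for $\mathbb{mc}$-continuity of the multiplication, and then apply Lemma \ref{product convergence} with $\mathbb{mc}$ in the role of the convergence. The additional self-contained argument you sketch is a valid unwinding of that lemma but is not needed.
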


\begin{proof}
By Theorem \ref{full lattice convergence to mc}, $\mathbb{mc}$ is a linear full lattice convergence on $X$. By Lemma \ref{if c is full then multiplication is mc-continuous},
the algebra multiplication in $X$ is $\mathbb{mc}$-continuous. The rest of the proof follows now from Lemma \ref{product convergence}.
\end{proof}

\begin{theorem}\label{when mc=mmc}
Let $\mathbb{c}$ be a linear full lattice convergence on a commutative $l$-algebra $X$ with a positive algebraic unit.
Then $\mathbb{mc}$-convergence coincides with $\mathbb{mmc}$-conver\-gence. 
\end{theorem}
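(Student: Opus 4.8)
The plan is to establish the two inclusions $\mathbb{mc}\subseteq\mathbb{mmc}$ and $\mathbb{mmc}\subseteq\mathbb{mc}$ separately. First I would simply unwind the definition of $\mathbb{mmc}=\mathbb{m}(\mathbb{mc})$: by Definition \ref{mc convergence} applied to the convergence $\mathbb{mc}$, a net $x_\alpha$ is $\mathbb{mmc}$-convergent to $x$ precisely when $u\cdot|x_\alpha-x|\convmc 0$ for every $u\in X_+$, and, applying Definition \ref{mc convergence} once more, this means $v\cdot\bigl|u\cdot|x_\alpha-x|\bigr|\convc 0$ for all $u,v\in X_+$. Since $X_+$ is closed under the algebra multiplication, $u\cdot|x_\alpha-x|\in X_+$, so its modulus equals itself, and since $X$ is associative and commutative, $v\cdot\bigl(u\cdot|x_\alpha-x|\bigr)=(uv)\cdot|x_\alpha-x|$. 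Hence $x_\alpha$ is $\mathbb{mmc}$-convergent to $x$ iff $(uv)\cdot|x_\alpha-x|\convc 0$ for all $u,v\in X_+$, whereas $x_\alpha\convmc x$ iff $w\cdot|x_\alpha-x|\convc 0$ for all $w\in X_+$.

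From this reformulation the inclusion $\mathbb{mc}\subseteq\mathbb{mmc}$ is immediate and does not even use the unit: given $x_\alpha\convmc x$ and arbitrary $u,v\in X_+$, the product $uv$ again lies in $X_+$, so $(uv)\cdot|x_\alpha-x|\convc 0$ by the $\mathbb{mc}$-hypothesis applied with $w:=uv$, which is precisely the $\mathbb{mmc}$-condition.

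For the reverse inclusion $\mathbb{mmc}\subseteq\mathbb{mc}$ I would bring in the positive algebraic unit $e\in X_+$. If $x_\alpha$ is $\mathbb{mmc}$-convergent to $x$, then taking $u:=e$ in the first-level definition gives $e\cdot|x_\alpha-x|=|x_\alpha-x|\convmc 0$, and unwinding this $\mathbb{mc}$-convergence — again using $|x_\alpha-x|\in X_+$ — yields $v\cdot|x_\alpha-x|\convc 0$ for every $v\in X_+$, i.e. $x_\alpha\convmc x$. Alternatively, by Theorem \ref{full lattice convergence to mc} the convergence $\mathbb{mc}$ is itself linear, full and lattice on $X$, and $X$ retains its positive algebraic unit, so Lemma \ref{mc implies c} applied to $\mathbb{mc}$ in place of $\mathbb{c}$ gives $\mathbb{m}(\mathbb{mc})\subseteq\mathbb{mc}$ at once. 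Combining the two inclusions yields $\mathbb{mc}=\mathbb{mmc}$.

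I do not expect a genuine obstacle: the argument is bookkeeping of the two nested modifications, and the only points needing care are (a) invoking the multiplicative closedness of $X_+$ together with associativity and commutativity to collapse the double product $v\cdot(u\cdot|x_\alpha-x|)$ into $(uv)\cdot|x_\alpha-x|$, and (b) noting that the positive unit is genuinely used only in the inclusion $\mathbb{mmc}\subseteq\mathbb{mc}$ — which is consistent with Examples \ref{0} and \ref{c_00}, where the absence of a unit already breaks the analogous Lemma \ref{mc implies c}.
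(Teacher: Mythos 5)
Your proof is correct, and your main argument takes a genuinely different route from the paper's. The paper proves the theorem in two lines by citation: Lemma \ref{if c is full then multiplication is mc-continuous} shows the multiplication is $\mathbb{mc}$-continuous, and then Theorem \ref{c=mc} (whose hypotheses $\mathbb{mc}$ satisfies by Theorem \ref{full lattice convergence to mc}) applied to $\mathbb{mc}$ in place of $\mathbb{c}$ yields $\mathbb{mmc}=\mathbb{mc}$; this is essentially the combination Lemma \ref{mc implies c} $+$ Lemma \ref{c implies mc} that you sketch as your ``alternative'' argument. Your primary argument instead unwinds the nested definition directly: using that $X_+$ is closed under multiplication (so $|u\cdot|x_\alpha-x||=u\cdot|x_\alpha-x|$) and associativity to collapse $v\cdot(u\cdot|x_\alpha-x|)$ to $(uv)\cdot|x_\alpha-x|$, you identify $\mathbb{mmc}$-convergence with the condition ``$(uv)\cdot|x_\alpha-x|\convc 0$ for all $u,v\in X_+$'', from which one inclusion is trivial and the other follows by taking $u=e$. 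This is more elementary and arguably more informative: it makes transparent that the equality of the two \emph{classes of pairs} needs nothing from $\mathbb{c}$ beyond its being a convergence (no linearity, fullness, or lattice property --- those hypotheses are only needed so that $\mathbb{mc}$ is itself a linear full lattice convergence), and it isolates the positive unit as being used solely in the inclusion $\mathbb{mmc}\subseteq\mathbb{mc}$, consistent with Examples \ref{0} and \ref{c_00}. What the paper's route buys in exchange is economy and reuse of already-established machinery.
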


\begin{proof} By Lemma \ref{if c is full then multiplication is mc-continuous}, the multiplication in $X$ is $\mathbb{mc}$-continuous.
Now, Theorem \ref{c=mc} implies $\mathbb{mmc}=\mathbb{mc}$.
\end{proof}

By Example \ref{mw convergence}, there exists a full yet not lattice convergence $\mathbb{c}$ on a commutative $f$-algebra still
satisfying $\mathbb{mmc}=\mathbb{mc}$.

In the end of this section, we give the following condition on a $\mathbb{mc}$-convergence to be $T_1$ in a certain class of Archimedean $f$-algebras.   

\begin{theorem}\label{T_1 mc in f-algebras}
Let $\mathbb{c}$ be an additive $T_1$ convergence on an Archimedean $f$-algebra $X$.
Then $\mathbb{mc}\in T_1$ iff $X$ is semiprime.
\end{theorem}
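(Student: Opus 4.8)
The plan is to handle the two implications separately. For ``$\mathbb{mc}\in T_1\Rightarrow X$ semiprime'' I argue by contraposition. If $X$ is not semiprime, then by Remark \ref{various properties of l-algebras}$(i)$ there is $z\ne 0$ with $z^2=0$. Since an $f$-algebra is a $d$-algebra and squares in an $f$-algebra are positive, $|z|^2=|z\cdot z|=|z^2|=0$ (using Remark \ref{various properties of l-algebras}$(iii)$,$(ix)$), so $|z|$ is nilpotent as well; by \cite[Prop.10.2]{Pag} (the fact already exploited in the proof of Lemma \ref{mc net of nilpotents}) every nilpotent element of an Archimedean $f$-algebra annihilates $X$, hence $u\cdot|z|=0$ for every $u\in X_+$. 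Consequently the constant net $a_\alpha\equiv z$ satisfies $u\cdot|z-0|=0\convc 0$ for all $u\in X_+$, i.e. $z\convmc 0$; since also $z\convmc z$ trivially and $z\ne 0$, this shows $\mathbb{mc}\notin T_1$.

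For the converse, assume $X$ is semiprime and let $x_\alpha\convmc x$ and $x_\alpha\convmc y$; I must prove $x=y$. Fix $u\in X_+$. From $u\cdot|x_\alpha-x|\convc 0$ and $u\cdot|x_\alpha-y|\convc 0$, additivity of $\mathbb{c}$ (applied to the corresponding product net and then restricted to the diagonal subnet, Definition \ref{convergence}$(b)$) gives $u\cdot|x_\alpha-x|+u\cdot|x_\alpha-y|\convc 0$. On the other hand, the triangle inequality $|x-y|\le|x_\alpha-x|+|x_\alpha-y|$ together with monotonicity of multiplication by $u\ge 0$ yields $0\le u\cdot|x-y|\le u\cdot|x_\alpha-x|+u\cdot|x_\alpha-y|$ for every $\alpha$. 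Hence the constant net $u\cdot|x-y|$ is squeezed below a $\mathbb{c}$-null net, which forces $u\cdot|x-y|=0$. Taking $u=|x-y|$ gives $|x-y|^2=0$, and semiprimeness (Remark \ref{various properties of l-algebras}$(i)$) gives $|x-y|=0$, i.e. $x=y$; thus $\mathbb{mc}\in T_1$.

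The step I expect to be the main obstacle is the squeeze just used: deducing $u\cdot|x-y|=0$ from $0\le u\cdot|x-y|\le d_\alpha\convc 0$ with $u\cdot|x-y|$ a constant net. Writing $e_\alpha:=d_\alpha-u\cdot|x-y|\ge 0$, additivity gives $e_\alpha\convc-\,u\cdot|x-y|$, so this is precisely the statement that a $\mathbb{c}$-limit of a net in $X_+$ stays in $X_+$, i.e. that $X_+$ is $\mathbb{c}$-closed / that $\mathbb{c}$ preserves inequalities in the sense of Proposition \ref{inequalities}; verifying this under the sole hypotheses that $\mathbb{c}$ is additive and $T_1$ is where the real work lies, everything else being routine manipulation with the triangle inequality and the $d$-algebra identity $|a\cdot b|=|a|\cdot|b|$. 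A cleaner alternative, once one checks that $\mathbb{mc}$ is additive, is to invoke Lemma \ref{T_1 for linear conv}: then $\mathbb{mc}\in T_1$ reduces to showing that no constant net $\mathbb{mc}$-converges to a distinct point, and a constant net $a\convmc b$ forces $u\cdot|a-b|\convc 0$, hence $u\cdot|a-b|=0$ for all $u\in X_+$ (the net $u\cdot|a-b|$ being constant and $\mathbb{c}\in T_1$), hence $a=b$ by semiprimeness as above.
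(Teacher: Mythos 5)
Your necessity direction is the paper's argument read contrapositively and is fine: from $z\ne 0$, $z^2=0$ you get $|z|\cdot|z|=|z\cdot z|=0$, hence $u\cdot|z|=0$ for all $u\in X_+$ by \cite[Prop.10.2]{Pag}, hence the constant net $z\convmc 0$ while also $z\convmc z$, killing $T_1$ --- exactly the paper's computation. For sufficiency, the obstacle you flag in your primary argument is a genuine one: from $0\le u\cdot|x-y|\le d_\alpha\convc 0$ with $u\cdot|x-y|$ constant you cannot extract $u\cdot|x-y|=0$ under the hypotheses ``additive and $T_1$'' alone, since Proposition \ref{inequalities} derives $\mathbb{c}$-closedness of $X_+$ from $T_1$ only for additive \emph{lattice} convergences, and nothing here makes $\mathbb{c}$ full or lattice; so that squeeze should be discarded. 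Your ``cleaner alternative'' is precisely the paper's proof: reduce via Lemma \ref{T_1 for linear conv} to a constant net $a\convmc b$, use $T_1$ of $\mathbb{c}$ on the constant nets $u\cdot|a-b|\convc 0$ to get $u\cdot|a-b|=0$ for every $u\in X_+$, deduce $(a-b)\cdot(a-b)=0$, and invoke semiprimeness. Promote that to the main argument. The one residual point, which you rightly flag and the paper passes over in silence, is that Lemma \ref{T_1 for linear conv} is stated for additive convergences, while additivity of $\mathbb{mc}$ is established in the paper (Theorem \ref{full lattice convergence to mc}) only when $\mathbb{c}$ is linear and full; under mere additivity of $\mathbb{c}$ the natural estimate $u\cdot|x_\alpha+y_\beta-(x+y)|\le u\cdot|x_\alpha-x|+u\cdot|y_\beta-y|$ again wants fullness, so this step deserves an explicit justification in both your write-up and the paper's.
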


\begin{proof}
Let $X$ be semiprime. If $\mathbb{mc}\not\in T_1$ then by Lemma \ref{T_1 for linear conv} $x\convmc y$ for some $x\ne y$ in $X$.
Hence $u\cdot|x-y|=|u\cdot(x-y)|\convc 0$ for every $u\in X_+$. Since $\mathbb{c}\in T_1$, then $|u\cdot(x-y)|=0$ and hence $u\cdot(x-y)=0$ for every $u\in X_+$. 
Then $w\cdot(x-y)=0$ for all $w\in X$. In particular, $(x-y)\cdot(x-y)=0$, violating the assumption that  $X$ is semiprime.
The obtained contradiction proves that $\mathbb{mc}\in T_1$.

Let $\mathbb{mc}\in T_1$. Suppose that $x\cdot x=0$ for some $x\in X$. Hence $|x|\cdot |x|=|x\cdot x|=0$.
Then $u\cdot|x|=0$ for all $u\in X_+$ by \cite[Prop. 10.2]{Pag}. By the definition of $\mathbb{mc}$-convergence, $x\convmc 0$.
Since $\mathbb{mc}\in T_1$, then  $x=0$ by Definition \ref{convergence}$(d)$. Therefore, $X$ is semiprime.
\end{proof}

\begin{corollary}\label{T_1 mo in f-algebras}
The $\mathbb{mo}$-convergence on an Archimedean $f$-algebra $X$ is a $T_1$-convergence iff $X$ is semiprime.
\end{corollary}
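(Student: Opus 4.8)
The plan is to deduce Corollary \ref{T_1 mo in f-algebras} directly from Theorem \ref{T_1 mc in f-algebras} by verifying that the order convergence $\mathbb{o}$ on an Archimedean $f$-algebra $X$ is an additive $T_1$ convergence, which is precisely the hypothesis required to apply the theorem with $\mathbb{c}=\mathbb{o}$.

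First I would recall that by Proposition \ref{o-convergence is full lattice and T_1}, the $\mathbb{o}$-convergence on any Riesz space $X$ is an additive (indeed full lattice) $T_1$ convergence; in particular this holds on the Archimedean $f$-algebra $X$. Thus $\mathbb{c}=\mathbb{o}$ satisfies the standing hypotheses of Theorem \ref{T_1 mc in f-algebras}. Applying that theorem verbatim yields that $\mathbb{mo}\in T_1$ if and only if $X$ is semiprime, which is exactly the assertion of the corollary.

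There is essentially no obstacle here: the corollary is an immediate specialization. The only point worth noting explicitly is that Theorem \ref{T_1 mc in f-algebras} requires nothing beyond additivity and the $T_1$ property of $\mathbb{c}$ (it does not, for instance, assume $\mathbb{c}$-continuity of the multiplication), so the well-known good behaviour of $\mathbb{o}$-convergence recorded in Proposition \ref{o-convergence is full lattice and T_1} suffices. Hence the proof is a one-line invocation.

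\begin{proof}
By Proposition \ref{o-convergence is full lattice and T_1}, the $\mathbb{o}$-convergence on $X$ is an additive $T_1$ convergence. Applying Theorem \ref{T_1 mc in f-algebras} with $\mathbb{c}=\mathbb{o}$, we conclude that $\mathbb{mo}\in T_1$ iff $X$ is semiprime.
\end{proof}
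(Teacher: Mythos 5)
Your proof is correct and is essentially identical to the paper's: both reduce the corollary to Theorem \ref{T_1 mc in f-algebras} by noting (via Proposition \ref{o-convergence is full lattice and T_1}) that $\mathbb{o}$ is an additive $T_1$ convergence on $X$.
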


\begin{proof} 
It follows from Theorem \ref{T_1 mc in f-algebras} since $\mathbb{o}$ is an additive $T_1$ convergence.
\end{proof}

\section{$\mathbb{o}$-Convergence on Commutative $l$-Al\-ge\-bras}

We begin with the following useful characterization of $\mathbb{o}$-continuity of the algebra multiplication in an
$l$-algebra $X$, which  follows from the fact that multiplication by a fixed $y\in X_+$  is a positive operator on $X$.

\begin{lemma}\label{about o-cont multiplication}
The multiplication in a commutative $l$-algebra $X$ is $\mathbb{o}$-conti\-nuous iff 
$$ 
  z_\gamma\downarrow 0 \ \Rightarrow \  y\cdot z_\gamma\downarrow 0 \ \ \ \ \ (\forall y\in X_+).
$$
\end{lemma}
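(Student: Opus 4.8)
The statement to prove is Lemma~\ref{about o-cont multiplication}: the multiplication in a commutative $l$-algebra $X$ is $\mathbb{o}$-continuous iff $z_\gamma\downarrow 0$ implies $y\cdot z_\gamma\downarrow 0$ for every $y\in X_+$.

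The plan is to prove both implications directly from the definitions, using the hint supplied in the text: for a fixed $y\in X_+$, the map $M_y\colon x\mapsto y\cdot x$ is a positive operator on $X$ (positivity is exactly the fact that $X_+$ is closed under multiplication, together with linearity of multiplication in each variable). For the forward direction ($\mathbb{o}$-continuity $\Rightarrow$ the downward condition): suppose $z_\gamma\downarrow 0$. Then in particular $z_\gamma\convo 0$, so by $\mathbb{o}$-continuity $y\cdot z_\gamma\convo 0$ for each $y\in X_+$. On the other hand, since $M_y$ is positive and $z_\gamma$ is decreasing, the net $y\cdot z_\gamma$ is decreasing; and $y\cdot z_\gamma\ge 0$ for all $\gamma$ since $y,z_\gamma\in X_+$. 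A decreasing net that is order convergent to $0$ must actually satisfy $y\cdot z_\gamma\downarrow 0$ (a decreasing net which $\mathbb{o}$-converges to $0$ decreases to its infimum, which is $0$); hence $y\cdot z_\gamma\downarrow 0$.

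For the converse: assume $z_\gamma\downarrow 0\Rightarrow y\cdot z_\gamma\downarrow 0$ for all $y\in X_+$, and let $x_\alpha\convo x$ in $X$ with $y\in X$ arbitrary. Writing $y=y^+-y^-$ and using linearity of multiplication together with the additivity of $\mathbb{o}$ (Proposition~\ref{o-convergence is full lattice and T_1}), it suffices to treat $y\in X_+$. By the definition of order convergence there is a net $w_\beta\downarrow 0$ such that for each $\beta$ there is $\alpha_\beta$ with $|x_\alpha-x|\le w_\beta$ for all $\alpha\ge\alpha_\beta$. Since $M_y$ is positive, for $\alpha\ge\alpha_\beta$ we get $|y\cdot x_\alpha-y\cdot x|=|y\cdot(x_\alpha-x)|\le y\cdot|x_\alpha-x|\le y\cdot w_\beta$, where the middle inequality uses $|y\cdot z|\le y\cdot|z|$ valid from $-|z|\le z\le|z|$ and positivity of $M_y$. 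By hypothesis $y\cdot w_\beta\downarrow 0$, so the displayed estimate witnesses $y\cdot x_\alpha\convo y\cdot x$. Thus multiplication is $\mathbb{o}$-continuous.

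I expect no serious obstacle here; the only point requiring a little care is making the reduction to $y\in X_+$ clean (splitting $y=y^+-y^-$ and invoking additivity of $\mathbb{o}$), and recording the elementary inequality $|y\cdot z|\le y\cdot|z|$ for $y\in X_+$, which follows from positivity of the multiplication operator $M_y$. Both are routine, so the proof will be short.

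\begin{proof}
For $y\in X_+$, the map $M_y\colon X\to X$, $M_y x=y\cdot x$, is linear and positive, since $X_+$ is closed under multiplication. In particular, for any $z\in X$ one has $-|z|\le z\le|z|$, hence $-y\cdot|z|\le y\cdot z\le y\cdot|z|$, i.e. $|y\cdot z|\le y\cdot|z|$.

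Assume the multiplication is $\mathbb{o}$-continuous, and let $z_\gamma\downarrow 0$ in $X$ and $y\in X_+$. Then $z_\gamma\convo 0$, so $y\cdot z_\gamma\convo 0$. Since $M_y$ is positive and $(z_\gamma)$ is decreasing with $z_\gamma\ge 0$, the net $(y\cdot z_\gamma)$ is decreasing and bounded below by $0$. A decreasing net that $\mathbb{o}$-converges to $0$ decreases to its infimum, which is therefore $0$; thus $y\cdot z_\gamma\downarrow 0$.

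Conversely, assume $z_\gamma\downarrow 0\Rightarrow y\cdot z_\gamma\downarrow 0$ for every $y\in X_+$. Let $x_\alpha\convo x$ in $X$ and let $y\in X$. Writing $y=y^+-y^-$ and using linearity of the multiplication together with the additivity of $\mathbb{o}$ (Proposition~\ref{o-convergence is full lattice and T_1}), it suffices to show $y\cdot x_\alpha\convo y\cdot x$ for $y\in X_+$. By definition of order convergence, there is a net $w_\beta\downarrow 0$ such that, for each $\beta$, there is $\alpha_\beta$ with $|x_\alpha-x|\le w_\beta$ for all $\alpha\ge\alpha_\beta$. Then, for $\alpha\ge\alpha_\beta$,
$$
  |y\cdot x_\alpha-y\cdot x|=|y\cdot(x_\alpha-x)|\le y\cdot|x_\alpha-x|\le y\cdot w_\beta.
$$
By hypothesis $y\cdot w_\beta\downarrow 0$, so the above shows $y\cdot x_\alpha\convo y\cdot x$. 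Hence the multiplication in $X$ is $\mathbb{o}$-continuous.
\end{proof}
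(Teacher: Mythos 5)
Your proof is correct and follows exactly the route the paper indicates: the paper gives no written proof for this lemma, only the remark that it ``follows from the fact that multiplication by a fixed $y\in X_+$ is a positive operator on $X$,'' and your argument is precisely the fleshed-out version of that hint (positivity of $M_y$ gives $|y\cdot z|\le y\cdot|z|$ and the dominating net $y\cdot w_\beta\downarrow 0$ in one direction, and turns $\mathbb{o}$-convergence of a decreasing positive net into $\downarrow 0$ in the other). No gaps; the reduction to $y\in X_+$ via $y=y^+-y^-$ and the additivity of $\mathbb{o}$ is handled correctly.
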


\begin{theorem}\label{proposition 3a}
Let $X$ be a commutative $l$-algebra. The following conditions are equivalent$:$\\
$(i)$ \ \ $X$ is a $d$-algebra with $\mathbb{o}$-continuous multiplication$;$\\
$(ii)$ \ $\inf u\cdot A=u\cdot\inf A$ for every $u\in X_+$ and $A\subseteq X$ such that $\inf A$ exists in $X$.
\end{theorem}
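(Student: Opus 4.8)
The plan is to prove the equivalence $(i)\Longleftrightarrow(ii)$ by working with the two implications separately, using Lemma \ref{about o-cont multiplication} as the bridge between $\mathbb{o}$-continuity of multiplication and the behaviour of downward nets under multiplication by a positive element. The direction $(ii)\Longrightarrow(i)$ should be the easier one: first, applying $(ii)$ to a two-point set $A=\{x,y\}$ with $x,y\in X_+$ gives $u\cdot(x\wedge y)=\inf\{u\cdot x,u\cdot y\}=(u\cdot x)\wedge(u\cdot y)$, and since $X$ is commutative this also yields the right-sided distributivity, so $X$ is a $d$-algebra. Second, if $z_\gamma\downarrow 0$ then $A=\{z_\gamma\}$ has $\inf A=0$, so by $(ii)$, $\inf_\gamma u\cdot z_\gamma=u\cdot 0=0$; and since multiplication by $u\in X_+$ is a positive operator, $u\cdot z_\gamma$ is itself a downward-directed net, hence $u\cdot z_\gamma\downarrow 0$. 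By Lemma \ref{about o-cont multiplication} this gives $\mathbb{o}$-continuity of the multiplication, establishing $(i)$.

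For $(i)\Longrightarrow(ii)$, assume $X$ is a $d$-algebra with $\mathbb{o}$-continuous multiplication, fix $u\in X_+$, and let $A\subseteq X$ with $a_0:=\inf A$ existing in $X$. First I would reduce to the case $A\subseteq X_+$ and $\inf A=0$: replace $A$ by $\{a-a_0:a\in A\}$, whose infimum is $0$, and note that $u\cdot\inf A=u\cdot(a_0+\inf\{a-a_0\})=u\cdot a_0$ while $\inf(u\cdot A)=\inf\{u\cdot a_0+u\cdot(a-a_0)\}=u\cdot a_0+\inf\{u\cdot(a-a_0)\}$, using that translation by the fixed element $u\cdot a_0$ preserves infima; thus it suffices to show $\inf\{u\cdot(a-a_0):a\in A\}=0$. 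Now form the downward-directed net obtained from $\{a-a_0:a\in A\}$ by taking finite infima: since in a $d$-algebra $u\cdot(x\wedge y)=(u\cdot x)\wedge(u\cdot y)$, multiplication by $u$ commutes with finite infima, so the net of finite infima of $u\cdot(a-a_0)$ is exactly $u$ applied to a downward net decreasing to $0$. By Lemma \ref{about o-cont multiplication}, $\mathbb{o}$-continuity forces this downward net to decrease to $0$, and its infimum equals $\inf\{u\cdot(a-a_0):a\in A\}$ because finite infima are cofinal below arbitrary infima. Hence $\inf(u\cdot A)=u\cdot\inf A$, as required.

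The main obstacle I anticipate is the careful handling of the reduction from an arbitrary subset $A$ to a downward-directed net: one must verify that passing to the net of finite infima does not change the value of $\inf A$ (true because any lower bound of $A$ is a lower bound of all finite infima and conversely), and that multiplication by $u\in X_+$, being a positive operator, maps lower bounds to lower bounds so that $\inf(u\cdot A)$ and $\inf$ of the image net agree. A secondary point requiring attention is that $d$-algebra distributivity is stated only for elements of $X_+$, so the translation-to-$X_+$ step must be performed before invoking distributivity; commutativity of $X$ is used to get the two-sided distributivity identity from the one-sided hypothesis and vice versa.
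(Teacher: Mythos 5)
Your proposal is correct and follows essentially the same route as the paper: two-point sets and downward nets for $(ii)\Longrightarrow(i)$ via Lemma \ref{about o-cont multiplication}, and for $(i)\Longrightarrow(ii)$ the reduction to $\inf A=0$ followed by passage to the downward-directed net of finite infima, with the $d$-algebra identity commuting $u\cdot(\,\cdot\,)$ past finite infima and $\mathbb{o}$-continuity finishing the argument. The only difference is that you spell out the translation argument behind the paper's ``without loss of generality, $\inf A=0$'' step, which is a harmless (and welcome) elaboration.
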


\begin{proof}
$(i)\Longrightarrow(ii)$\  
Let $X$ be a $d$-algebra i.e. 
$$
  u\cdot(x\wedge y)=(u\cdot x)\wedge(u\cdot y) \ \ \ (\forall u,x,y\in X_+),
  \eqno(16)
$$
and the multiplication in $X$ is $\mathbb{o}$-continuous. Let $A\subseteq X$ satisfy $\inf A\in X$. Without loss of generality, 
we may suppose $\inf A=0$. Consider the following downward directed set in $X$ 
$$
  A^\wedge=\left\{\bigwedge\limits_{k=1}^n a_k: a_k\in A\right\}.
$$
Clearly $A^\wedge\downarrow 0$. Pick any $u\in X_+$. $\mathbb{o}$-Continuity of the multiplication implies $u\cdot A^\wedge\downarrow 0$. By (16),
$$
   (u\cdot A)^\wedge=\left\{\bigwedge\limits_{k=1}^n (u\cdot a_k): a_k\in A\right\}= 
   \left\{u\cdot\bigwedge\limits_{k=1}^n a_k: a_k\in A\right\}=u\cdot A^\wedge,
$$
and hence $(u\cdot A)^\wedge\downarrow 0$, that means
$$
   \inf u\cdot A=\inf(u\cdot A)^\wedge=0=u\cdot 0=u\cdot\inf A,
$$	
as required.

$(ii)\Longrightarrow(i)$\   
Applying $(ii)$ to $A=\{x,y\}\subseteq X_+$ and $u\in X_+$ provides that $X$ is a $d$-algebra.
Let $(a_\xi)_{\xi\in\Xi}$ satisfy $a_\xi\downarrow 0$ in $X$. Then, for every $u\in X_+$, $u\cdot a_\xi\downarrow$. By $(ii)$,
$$
  \inf\{u\cdot a_\xi:\xi\in\Xi\}=\inf u\cdot\{a_\xi:\xi\in\Xi\}=u\cdot\inf\{a_\xi:\xi\in\Xi\}=u\cdot 0=0.
$$
Thus $u\cdot a_\xi\downarrow 0$, which implies $\mathbb{o}$-continuity of the algebra multiplication by Lemma \ref{about o-cont multiplication}. 
\end{proof}
\noindent

\begin{remark} 
\begin{enumerate}
\item[$(i)$] \ Every Archimedean $f$-algebra satisfies the conditions of Theorem $\ref{proposition 3a}$ due to
$\mathbb{o}$-continuity of its algebra multiplication.
\item[$(ii)$] \ A commutative Archimedean $d$-algebra does not satisfy  the condition $(ii)$ of  Theorem $\ref{proposition 3a}$
in general $($see, e.g., Example $\ref{almost f-algebra in which multiplication is not 0-cont}$$)$.
\item[$(iii)$] \ The condition $(ii)$ of Theorem $\ref{proposition 3a}$ can be considered as a generalization of the notion of a $d$-algebra 
$($cf. Definition $\ref{various lattice algebras}$$(a)$$)$ and it was referred to as the {\em infinite distributive property} in $\cite{Ay1}$.
\end{enumerate}
\end{remark}

\begin{proposition}\label{mo net of nilpotents}
Let $X$ be an Archimedean $f$-algebra. Then the set $N(X)$ of all nilpotent elements in $X$ is $\mathbb{mo}$-closed.
\end{proposition}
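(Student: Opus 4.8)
The plan is to deduce this immediately from Lemma \ref{mc net of nilpotents}, exactly as Corollary \ref{T_1 mo in f-algebras} is deduced from Theorem \ref{T_1 mc in f-algebras}. That lemma says that for \emph{any} additive $T_1$ convergence $\mathbb{c}$ on an Archimedean $f$-algebra, the set $N(X)$ is $\mathbb{mc}$-closed; so it is enough to check that the order convergence meets those hypotheses. By Proposition \ref{o-convergence is full lattice and T_1}, $\mathbb{o}$ is an additive full lattice $T_1$ convergence on every Riesz space, in particular on our Archimedean $f$-algebra $X$. Applying Lemma \ref{mc net of nilpotents} with $\mathbb{c}=\mathbb{o}$ therefore yields that $N(X)$ is $\mathbb{mo}$-closed, which is precisely the assertion. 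So the whole proof is a one-line reduction.

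If one wishes to make the argument self-contained, it suffices to repeat the proof of Lemma \ref{mc net of nilpotents} with $\mathbb{c}=\mathbb{o}$. Take a net $(x_\alpha)$ in $N(X)$ with $x_\alpha\convmo x$. For each $u\in X_+$, since every $f$-algebra is a $d$-algebra (Remark \ref{various properties of l-algebras}), we have $u\cdot|x_\alpha-x|=|u\cdot(x_\alpha-x)|=|u\cdot x_\alpha-u\cdot x|$; as $x_\alpha$ is nilpotent in an Archimedean $f$-algebra, $u\cdot x_\alpha=0$ by \cite[Prop.10.2]{Pag}, so $u\cdot|x_\alpha-x|=|u\cdot x|\convo 0$ by the definition of $\mathbb{mo}$-convergence. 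The left-hand side does not depend on $\alpha$, so by additivity of $\mathbb{o}$ and Lemma \ref{T_1 for linear conv} (applied to $\mathbb{o}\in T_1$) the constant net $|u\cdot x|$ can $\mathbb{o}$-converge to $0$ only if $|u\cdot x|=0$; hence $u\cdot x=0$ for every $u\in X_+$. Taking $u=|x|$ gives $x\cdot x=0$, so $x\in N(X)$, i.e. $N(X)$ is $\mathbb{o}$-closed in the sense of Definition \ref{closed and compact}$(a)$, which is the same as being $\mathbb{mo}$-closed.

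The only ingredient that is not pure bookkeeping — and the point I would flag as the ``hard part'' — is the structural fact from the theory of $f$-algebras that a nilpotent element of an Archimedean $f$-algebra annihilates the whole algebra (used above through \cite[Prop.10.2]{Pag}). But this is already the crux of Lemma \ref{mc net of nilpotents}, so nothing genuinely new needs to be proved here; the substance of the proposition lies entirely in verifying that $\mathbb{o}$ satisfies the hypotheses of that lemma, which Proposition \ref{o-convergence is full lattice and T_1} hands us for free.
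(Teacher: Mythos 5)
Your proof is correct and is exactly the paper's own argument: the paper likewise disposes of Proposition \ref{mo net of nilpotents} in one line by invoking Lemma \ref{mc net of nilpotents} together with the fact (Proposition \ref{o-convergence is full lattice and T_1}) that $\mathbb{o}$ is an additive $T_1$ convergence. Your optional self-contained expansion faithfully reproduces the proof of that lemma specialized to $\mathbb{c}=\mathbb{o}$, so nothing is missing.
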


\begin{proof}
The result follows from Lemma \ref{mc net of nilpotents}, since $\mathbb{o}$ is an additive $T_1$ convergence on $X$
by Proposition \ref{o-convergence is full lattice and T_1}.
\end{proof}

\begin{example}\label{sequential o} 
Consider the $l$-algebra $s_{\omega}(T)$ of all countably supported real functions on an uncountable set $T$  with the poitwise multiplication.
It is a straightforward to check that$:$
\begin{enumerate} 
\item[$(a)$] \ $s_{\omega}(T)$ is universally $\sigma$-complete semiprime non-unital $f$-algebra$;$
\item[$(b)$] \ the $\mathbb{o}$-convergence on the Dedekind complete Riesz space $s_{\omega}(T)$ is sequential in the sense of Definition $\ref{sequential c-convergence}$$;$
\item[$(c)$] \ the $\mathbb{o}$-convergence is not topological in $s_{\omega}(T)$ due to $\cite[Thm.1]{DEM3}$.
\end{enumerate} 
\end{example}

Although it is beyond the scope of the present paper, it is worth mentioning that many interesting applications of sequential 
full convergences are related to Koml{\'o}s-like properties in Riesz spaces and in $l$-algebras (see, e.g., \cite{GTX,EEG}).

The following well known fact will be used later.  Since we did not find an appropriate reference to this proposition in the literature,
we include its proof.

\begin{proposition}\label{extension of multiplication to Dedekind completion}
Let $X$ be an Archimedean commutative $l$-algebra $X$ with  $\mathbb{o}$-continuous algebra multiplication $*$.
Then $*$ admits a unique extension $*$ to the Dedekind completion $X^\delta$ of $X$ which makes $X^\delta$ an $l$-algebra
with  $\mathbb{o}$-continuous algebra multiplication $*$. If $X$ is also an $f$-algebra, almost $f$-algebra, or $d$-algebra, 
the same is true for $X^\delta$ equipped with the extended algebra multiplication $*$.   
Furthermore, if $X$ has  multiplicative unit $e$, then $e$ is also the multiplicative unit in $X^\delta$.
\end{proposition}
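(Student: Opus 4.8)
The plan is to construct the extension $*$ on $X^\delta$ directly via the canonical way of extending a positive bilinear map to the Dedekind completion, then verify algebraically that it inherits each of the listed properties. First I would recall that for each fixed $y\in X_+$ the map $L_y\colon x\mapsto y*x$ is a positive operator on $X$, and by Lemma~\ref{about o-cont multiplication} it is $\mathbb{o}$-continuous, equivalently order continuous. Since $X$ is order dense in $X^\delta$ and $L_y$ is order continuous, it extends uniquely to an order continuous positive operator $\widehat{L_y}$ on $X^\delta$; concretely, for $z\in X^\delta_+$ one sets $\widehat{L_y}(z):=\sup\{y*x: x\in X,\ 0\le x\le z\}$ (the supremum exists because $X^\delta$ is Dedekind complete and the set is bounded above by $\widehat{L_y}$ applied to any $w\in X$ with $w\ge z$), and then extends to all of $X^\delta$ by linearity on positive and negative parts. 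This gives us $y*z$ for $y\in X_+$, $z\in X^\delta$. One then repeats the same construction in the first variable: for fixed $z\in X^\delta_+$, the map $y\mapsto y*z$ (already defined for $y\in X_+$) is positive and, by the symmetric argument using commutativity of $*$ on $X$ and the density/order-continuity, extends to $X^\delta$. The compatibility of the two extensions (so that one unambiguously obtains a bilinear map $*\colon X^\delta\times X^\delta\to X^\delta$) follows from uniqueness of order-continuous extensions together with the density of $X$; I would spell this out by checking that both candidates agree on $X\times X^\delta$ and on $X^\delta\times X$ and then use an approximation-from-below argument in each slot.

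Next I would verify the algebraic identities, each by reduction to $X$ via order density and order continuity. Commutativity: $y*z=z*y$ holds for $y,z\in X$, and both sides are order continuous in each variable, so the identity passes to $X^\delta\times X^\delta$ by approximating each argument from below by elements of $X$. Associativity: $(a*b)*c=a*(b*c)$ holds on $X$; fixing two of the three arguments in $X$ and letting the third range over $X^\delta$, order continuity gives the identity; then one relaxes the fixed arguments one at a time to $X^\delta$. That the positive cone is closed under $*$: if $a,b\in X^\delta_+$, write $a=\sup\{x\in X:0\le x\le a\}$, $b=\sup\{w\in X:0\le w\le b\}$, then $x*w\ge 0$ in $X\subseteq X^\delta$ and $a*b=\sup_{x,w} x*w\ge 0$. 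So $X^\delta$ with $*$ is an $l$-algebra. The $\mathbb{o}$-continuity of the extended multiplication follows from Lemma~\ref{about o-cont multiplication}: if $z_\gamma\downarrow 0$ in $X^\delta$ and $y\in X^\delta_+$, then $y*z_\gamma\downarrow$ to some $w\ge 0$; to see $w=0$, fix $\varepsilon$-type approximants $x\in X$, $0\le x\le y$, and $u\in X$, $u\ge z_{\gamma_0}$, use that in $X^\delta$ one has $w\le y*z_\gamma\le x*z_\gamma+(y-x)*u$ and push $x\uparrow y$; a cleaner route is to note $y*z_\gamma=\widehat{L_y}(z_\gamma)$ and $\widehat{L_y}$ is order continuous by construction, so $\widehat{L_y}(z_\gamma)\downarrow\widehat{L_y}(0)=0$.

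For the structural properties (that being an $f$-algebra, almost $f$-algebra, or $d$-algebra is inherited), I would argue again by density together with the order-continuity of the relevant band/disjointness data in a Dedekind complete space. Suppose $X$ is a $d$-algebra, so $u*(x\wedge y)=(u*x)\wedge(u*y)$ for $u,x,y\in X_+$. Both sides are order continuous in each of the three variables on $X^\delta_+$ (multiplication by the construction above, and $\wedge$ is order continuous in a Dedekind complete lattice), so approximating $u,x,y$ from below by elements of $X_+$ transfers the identity to $X^\delta_+$. For the almost $f$-algebra property, given $a,b\in X^\delta_+$ with $a\wedge b=0$, one uses that in $X^\delta$ the band $\{b\}^{dd}$ is generated by $X$-elements below $b$ and the complementary band by $X$-elements below $a$; writing $a=\sup x$, $b=\sup w$ with $x\perp w$ in $X$ (which can be arranged since $a\perp b$), one gets $x*w=0$ in $X$ and hence $a*b=\sup x*w=0$. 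The $f$-algebra case $x\wedge y=0\Rightarrow (u*x)\wedge y=0$ is handled the same way, approximating $u,x,y$ and using order continuity of $\wedge$ and of multiplication. Finally, if $e$ is a multiplicative unit of $X$, then for $a\in X^\delta_+$ with $a=\sup\{x\in X:0\le x\le a\}$ one has $e*a=\sup\{e*x\}=\sup\{x\}=a$, and by linearity $e*a=a$ for all $a\in X^\delta$, so $e$ is the unit of $X^\delta$; uniqueness of the extension has already been established. The main obstacle I anticipate is the careful bookkeeping in showing that the two one-variable extensions glue into a well-defined bilinear map and that order continuity holds jointly enough to push the multi-variable identities through — this is where one must be precise about which arguments lie in $X$ and which in $X^\delta$ at each stage of the approximation, rather than any single deep fact.
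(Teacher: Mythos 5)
Your proposal is correct and follows essentially the same route as the paper: both define the extension on $X^\delta_+$ as the supremum of products $u*v$ of positive elements of $X$ lying below the arguments, and both verify the algebraic identities, the structural ($d$-, almost $f$-, $f$-algebra) properties, and the $\mathbb{o}$-continuity of the extension via order density of $X$ in $X^\delta$ combined with $\mathbb{o}$-continuity of the original multiplication. The only cosmetic difference is in the uniqueness step, which you derive from uniqueness of order-continuous operator extensions, whereas the paper deduces it from Lemma~\ref{product convergence} applied to eventually order bounded $\mathbb{o}$-convergent nets; these amount to the same density argument.
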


\begin{proof}
First, arguing as in \cite[p.67]{Pag}, we extend uniquely $*$ to the algebra multiplication $*$ in $X^\delta_+$.
Given $x^\delta,y^\delta\in X^\delta_+$, there exist $x,y\in X_+$ such that
$$ 
   x^\delta=\sup\limits_{X^\delta}\{w\in X_+|w\le x^\delta\}\le x \ \ \text{and} \ \ y^\delta=\sup_{X^\delta}\{w\in X_+|w\le y^\delta\}\le y.
$$
Since $0\le u*v\le x*y$ for all $u,v\in X$ with $0\le u\le x^\delta$ and $0\le v\le y^\delta$, there exists
$$
    z^\delta=\sup_{X^\delta}\{u*v: u,v\in X_+ ; u\le x^\delta; v\le y^\delta\}.
$$
We define $x^\delta * y^\delta:=z^\delta\in X^\delta$. It should be clear that 
the so defined multiplication makes $X^\delta$ an $l$-algebra and extends the original multiplication in $X$ to $X^\delta$.
It is routine to prove that the extended multiplication satisfies conditions $(a)$, $(b)$, $(c)$, or $(e)$ of Definition \ref{various lattice algebras} 
in $X^\delta$ whenever the original multiplication satisfies the corresponding conditions in $X$.

In order to show that the extended multiplication is $\mathbb{o}$-continuous, suppose $z^{\delta}_\xi\downarrow 0$ in $X^\delta$ and $y\in X_+$. 
Clearly $U:=\{x\in X|(\exists\xi) x\ge z^{\delta}_\xi\}$ satisfies $U\downarrow 0$ in $X$. By the assumption, $y*U\downarrow 0$. 
Therefore, $y*z^{\delta}_\xi\downarrow 0$ in $X^\delta$.  Now let $z^{\delta}_\xi\downarrow 0$ in $X^\delta$ and $y\in X^\delta_+$. 
Since $y^\delta=\sup_{X^\delta}\{w\in X_+|w\le y^\delta\}\le y$ for some $y\in X_+$, we get    
$$
  0\le y^\delta*z^{\delta}_\xi\le y*z^{\delta}_\xi\downarrow 0, 
$$
and hence $y^\delta*z^{\delta}_\xi\downarrow 0$ in $X^\delta$. By Lemma \ref{about o-cont multiplication},
the extended multiplication $*$ is $\mathbb{o}$-continuous in  $X^\delta$.

Suppose that $\circ$ is another $\mathbb{o}$-continuous algebra multiplication in $X^\delta$ that makes $X^\delta$ an $l$-algebra and extends $*$ from $X$. 
Let $\hat{x},\hat{y}\in X^\delta$ and take nets $x_\alpha$ and $y_\beta$ in $X$ such that $x_\alpha\convo\hat{x}$ and $y_\beta\convo\hat{y}$ in $X^\delta$. 
Since any $\mathbb{o}$-Cauchy net in $X$ is  eventually order bounded in $X$, the nets $x_\alpha$ and $y_\beta$ are both eventually order bounded.
Due to  $\mathbb{o}$-continuity of the both extended multiplications, Lemma \ref{product convergence} implies
$x_\alpha*y_\beta\convo\hat{x}\cdot\hat{y}$ and $x_\alpha*y_\beta\convo\hat{x}\circ\hat{y}$.
Since $\hat{x},\hat{y}\in X^\delta$ were chosen arbitrary, this ensures uniqueness of the extension.
\end{proof}

It is worth to mention that $X^\delta$ can be unital even when $X$ is an $f$-algebra without multiplicative unit \cite[Ex.10.10]{Pag}.\\ 

In what follows, $X$ is an Archimedean Riesz space. We remind that a  Dedekind complete Riesz space is called {\em universally complete} if each non-empty pairwise 
disjoint subset of its positive elements has a supremum. It is a straightforward consequence of the Zorn lemma that each universally complete Riesz space has a weak unit. 
It is well known that any Archimedean Riesz space $X$ has a Dedekind and a universal completion, unique up to lattice isomorphism which are denoted by $X^\delta$  
and $X^u$. We always suppose $X\subseteq X^\delta\subseteq X^u$, whereas $X^\delta$ is an order ideal in $X^u$. Since $X$ is order dense in $X^\delta$ and $X^\delta$ is order 
dense in $X^u$, $X$ is regular in both $X^\delta$ and $X^u$. It is also well known that $X$ is {\it discrete} iff $X$ is lattice isomorphic to an order dense Riesz subspace 
of the universally complete Riesz space ${\mathbb{R}^D}$ for some $D$ (cf. \cite[Theorem 1.78]{AB}).

The classical result of Gordon \cite[Theorem 2]{Go1} (cf. also \cite[Thms 8.1.2 and 8.1.6]{Ku}) expresses the immanent relations between 
Riesz spaces and Boole\-an-valued analysis. It can be stated briefly as follows: {\em every universally complete Riesz space is an interpretation 
of the reals ${\mathcal R}$ in an appropriate Boolean-valued model $V^{(B)}$}.  For  terminology and elementary techniques of Boolean-valued analysis
not explained in this paper we refer  the reader to \cite{Go1,Go2,Go3,Ku,GKK,EGK}.

\begin{theorem}[The Gordon theorem]\label{Gordon}
Let $X$ be an Archimedean Riesz space with the $($always complete$)$ Boolean algebra $B={\mathfrak{B}(X)}$ of all bands in $X$ and ${\mathcal R}$ the real field 
in $V^{(B)}$. The {\em descent} ${\mathcal R}\downarrow:=\{x\in V^{(B)}: [\![x\in{\mathcal R}]\!]=1_B\}$ of ${\mathcal R}$
is a universally complete Riesz space including $X$ as an order dense Riesz subspace. Moreover,
$$
  bx\le by \ \Longleftrightarrow\ b\le [\![x \le y]\!] \ \ \ \ \ \ (\forall b\in B)(\forall x,y\in{\mathcal R}\downarrow),
$$
where $B$ is identified with the Boolean algebra $\mathfrak{P}(X^\delta)=\mathfrak{P}(X^u)$ of all band projections in $X^\delta$,
and therefore in $X^u$ since $X^\delta$ is an order ideal in $X^u$. 
\end{theorem}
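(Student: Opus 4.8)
The plan is to recall the classical Boolean-valued construction underlying this theorem; full details can be found in \cite{Go1,Ku,GKK}, so I will only indicate the main steps. First I would check that $B=\mathfrak{B}(X)$ is a complete Boolean algebra: the bands of an Archimedean Riesz space, ordered by inclusion, form a complete lattice (arbitrary infima are intersections, suprema are generated bands) and $K\mapsto K^{d}$ is a Boolean complementation. Since $X$ is order dense and regular in both $X^\delta$ and $X^u$, the bands of $X$, $X^\delta$ and $X^u$ are canonically identified, and in a Dedekind complete Riesz space bands correspond bijectively to band projections; this is precisely the identification $B\cong\mathfrak{P}(X^\delta)=\mathfrak{P}(X^u)$ appearing in the statement. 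Next I would form the Boolean-valued universe $V^{(B)}$ and, inside it, the reals $\mathcal{R}$ (Dedekind cuts of $\mathbb{Q}^{\wedge}$), so that $[\![\,\mathcal{R}$ is a Dedekind complete totally ordered field$\,]\!]=1_B$ by transfer. Equipping the descent $\mathcal{R}{\downarrow}$ with the descended operations and the descended order ($x\le y$ iff $[\![x\le_{\mathcal{R}}y]\!]=1_B$, and similarly for $+$, scalar multiplication and ring multiplication), a routine transfer argument shows $\mathcal{R}{\downarrow}$ is a commutative unital $f$-algebra, hence a Riesz space; it is Archimedean because the Archimedean axiom for $\mathcal{R}$ evaluated over $\mathbb{N}^{\wedge}$ descends, using that the countable meet $\bigwedge_{n}[\![nx\le y]\!]$, computed in the complete algebra $B$, equals $[\![(\forall n\in\mathbb{N}^{\wedge})\,nx\le y]\!]$.

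For universal completeness of $\mathcal{R}{\downarrow}$ I would exploit completeness of $B$ together with the ascent and mixing machinery: an order bounded $A\subseteq\mathcal{R}{\downarrow}$ ascends to an internally bounded subset of $\mathcal{R}$, whose internal supremum exists by transfer and descends to $\sup A$, giving Dedekind completeness; and given a partition of unity $(b_i)$ in $B$ with elements $(x_i)$ in $\mathcal{R}{\downarrow}$, the mixing principle produces $x=\mathrm{mix}(b_i x_i)$ with $b_i\le[\![x=x_i]\!]$, which applied to pairwise disjoint positive families yields their suprema, hence lateral completeness. To embed $X$, I would invoke the classical realization of a universally complete Riesz space with weak unit as the space $C_\infty(Q)$ of extended-real-valued continuous functions on the Stone space $Q$ of $B$: one has $X^u\cong C_\infty(Q)$, and $C_\infty(Q)$ is canonically isomorphic to $\mathcal{R}{\downarrow}$ (such a continuous function is exactly a $B$-name of a real), so restricting the composite isomorphism to $X\subseteq X^u$ embeds $X$ as a Riesz subspace, order dense since $X$ is order dense in $X^u$. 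Alternatively one constructs $\iota:X\to\mathcal{R}{\downarrow}$ directly: fixing a weak unit $u$ of $X^u$, send $x$ to the element $\widehat{x}\in V^{(B)}$ with $[\![\,q^{\wedge}<\widehat{x}\,]\!]$ equal to the band projection onto the band generated by $(x-qu)^{+}$ for $q\in\mathbb{Q}$, and verify additivity, positive homogeneity, order preservation, injectivity and order density of the range. Finally the displayed equivalence $bx\le by\iff b\le[\![x\le y]\!]$ follows from the definition of the descended order together with the band-projection/truth-value dictionary: identifying $b$ with $\pi_b$ one has $bx=\pi_b x=\mathrm{mix}(b\,x,b^{d}0)$, so $[\![bx\le by]\!]=b^{d}\vee[\![x\le y]\!]$, and hence $bx\le by$ holds globally iff $b\le[\![x\le y]\!]$.

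I expect the main obstacle to be the embedding step — producing a concrete Riesz embedding of $X$ into $\mathcal{R}{\downarrow}$ with order dense range — which rests either on the nontrivial functional representation of universally complete Riesz spaces or on a careful ascent/descent verification of the Dedekind-cut construction; establishing universal completeness of $\mathcal{R}{\downarrow}$, though conceptually clean, likewise requires the full mixing and ascent apparatus of Boolean-valued analysis.
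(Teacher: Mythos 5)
The paper does not actually prove this theorem: it is quoted as Gordon's classical result, with references to \cite{Go1} and \cite[Thms 8.1.2 and 8.1.6]{Ku}, so there is no in-paper argument to compare against. Your outline --- completeness of the Boolean algebra of bands and its identification with $\mathfrak{P}(X^\delta)=\mathfrak{P}(X^u)$, descent of $\mathcal{R}$ from $V^{(B)}$ with the ascent/mixing machinery giving Dedekind and lateral completeness, the spectral-function (or $C_\infty(Q)$) embedding of $X$ as an order dense Riesz subspace, and the truth-value computation $[\![bx\le by]\!]=b^{d}\vee[\![x\le y]\!]$ yielding the displayed equivalence --- is a faithful summary of exactly the standard argument in those references, and I see no gaps at the level of detail you attempt.
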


By the Gordon theorem, the universal completion $X^u$ of an Archi\-me\-dean Riesz space $X$ is the descent ${\mathcal R}\downarrow$
of the reals ${\mathcal R}$ in $V^{(\mathfrak{B}(X))}$, and  uniqueness of $X^u$ up to an order isomorphism follows
from  uniqueness of ${\mathcal R}$ in $V^{(\mathfrak{B}(X))}$ (cf. \cite[8.1.7.]{Ku}). Clearly any $x\in {\mathcal R}\downarrow$
such that $[\![x>0]\!]=1_B$ is a weak order unit in ${\mathcal R}\downarrow$. Furthermore, ${\mathcal R}\downarrow$ is a unital $f$-algebra furnished 
with the following $f$-algebra multiplication: 
$$
    x\cdot y =z \ \ \text{whenever} \ \  [\![x\cdot y=z]\!]=1_B \ \ \ \ (\forall x,y,z\in {\mathcal R})
$$
(see \cite[Thm.1.6]{Go2}, \cite[8.1.3]{Ku}, and \cite[A3.2]{GKK}). In what follows, we denote this 
{\em natural representational $f$-algebra multiplication} in $X^u$ by $\cdot$. Moreover, any other semiprime 
$f$-algebra multiplication $\circ$ in $X^u$ is uniquely determined by fixing a multiplicative unit (cf. \cite[1.4.6(3)]{Ku}).

The rest of the section is devoted to applications of the Gordon theorem to $\mathbb{mo}$-convergence on $f$-algebras. We include
the following well known result (see \cite{Ku,BC} and references therein) and provide its straightforward Boolean-valued proof 
based on Theorem \ref{Gordon}.

\begin{proposition}\label{universally complete f-algebra}
Let $X$ be an Archimedean $f$-algebra with the algebra multiplication $*$ and let $X^u$ be an $f$-algebra with its natural  
algebra multiplication $\cdot$ and the multiplicative unit $e$. Then there exists $\zeta\in X^u_+$ such that
$x*y=\zeta\cdot x\cdot y$ for all $x,y\in X$. 

Furthermore, $X$ is $*$-semiprime iff $\zeta$ is a weak order unit in $X^u$.
In this case, $*$ has the unique semiprime extension to $X^u$ obtaining by the formula $x*y:=\zeta\cdot x\cdot y$ for all $x,y\in X^u$.
\end{proposition}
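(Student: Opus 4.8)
The plan is to invoke the Gordon theorem to identify $X^u$ with the descent $\mathcal{R}{\downarrow}$, to show that each left‑multiplication operator $M_x\colon y\mapsto x*y$ on $X$ is, viewed inside $X^u$, multiplication by a uniquely determined positive element $\theta(x)\in X^u$ with respect to the \emph{natural} $f$-algebra multiplication $\cdot$, and then to read off $\zeta$ from the ``multiplier identity'' $\theta(x)\cdot y=\theta(y)\cdot x$. Throughout, $X^u=\mathcal{R}{\downarrow}$ with natural multiplication $\cdot$ and unit $e$ (where $[\![e=1]\!]=1_B$, $B=\mathfrak{B}(X)$), $X$ is order dense in $X^u$, and $(X^u,\cdot)$ is a semiprime $f$-algebra (being unital, Remark~\ref{various properties of l-algebras}$(iv)$).

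\emph{Constructing $\theta$ and $\zeta$.} For $x\in X$ the operator $M_x=M_{x^+}-M_{x^-}$ is order bounded, and $|M_xy|=|x|\cdot|y|$ (valid in the $d$-algebra $X$, Remark~\ref{various properties of l-algebras}$(iii),(ix)$) together with the $f$-algebra identity shows $M_x\in\mathrm{Orth}(X)$; moreover a routine computation with the $f$-algebra identities gives $b(x*y)=(bx)*y=x*(by)$ for every band projection $b$, i.e.\ $M_x$ is band‑projection homogeneous. By Theorem~\ref{Gordon}, such an operator on the order‑dense Riesz subspace $X$ of $\mathcal{R}{\downarrow}=X^u$ is precisely multiplication by a unique element $\theta(x)\in X^u$ relative to $\cdot$, with $\theta(x)\ge 0$ when $x\ge 0$. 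Since $X$ is order dense in $X^u$, one checks directly that $\theta$ is linear, that $\theta(x*y)=\theta(x)\cdot\theta(y)$, and that $\theta(x)\cdot y=x*y$ for all $x,y\in X$; commutativity of $*$ then yields $\theta(x)\cdot y=\theta(y)\cdot x$. As $f$-algebras are almost $f$-algebras, $x\wedge y=0$ forces $x*y=0$; since every positive element of $\{x\}^{d}$ (a band in $X^u$) is a supremum of elements of $\{x\}^{d}\cap X$, order continuity of $\cdot$ gives $\theta(x)\perp\{x\}^{d}$, hence $\theta(x)\in\{x\}^{dd}$ for $x\in X_+$. As $x$ is a weak order unit of $\{x\}^{dd}$ and $X^u$ is a semiprime $f$-algebra, there is a unique $\zeta_x\in\{x\}^{dd}_+$ with $\zeta_x\cdot x=\theta(x)$. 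Using $\theta(x)\cdot x'=\theta(x')\cdot x$ and the fact that $x\cdot x'$ is a weak order unit of $\{x\}^{dd}\cap\{x'\}^{dd}=\{x\wedge x'\}^{dd}$ (a standard property of semiprime $f$-algebras), $\zeta_x$ and $\zeta_{x'}$ agree on $\{x\}^{dd}\cap\{x'\}^{dd}$; since $X$ is order dense and $X^u$ is universally complete, the consistent family $(\zeta_x)_{x\in X_+}$ mixes to a single $\zeta\in X^u_+$. Because $x\cdot y\in\{x\}^{dd}$ for $x\in X_+$, $y\in X$, we obtain $x*y=\theta(x)\cdot y=\zeta_x\cdot x\cdot y=\zeta\cdot x\cdot y$, and splitting $x=x^+-x^-$ extends this to all $x,y\in X$.

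\emph{Semiprimality and the extension.} If $\zeta$ is a weak order unit and $x*x=0$, then $\zeta\cdot x^2=0$, so $\zeta\wedge x^2=0$ (Remark~\ref{various properties of l-algebras}$(ii)$ applied in $X^u$), whence $x^2=0$ and then $x=0$; thus $X$ is $*$-semiprime by Remark~\ref{various properties of l-algebras}$(i)$. Conversely, if $\zeta$ is not a weak order unit, order density provides $0<x\in X$ with $x\wedge\zeta=0$; then $x\cdot\zeta=0$ in $X^u$, so $x*x=\zeta\cdot x^2=(\zeta\cdot x)\cdot x=0$, and $X$ is not $*$-semiprime. When $\zeta$ is a weak order unit, the formula $x*y:=\zeta\cdot x\cdot y$ defines an $f$-algebra multiplication on $X^u$ (commutativity and associativity are immediate, $X^u_+$ is closed under it, and $x\wedge y=0$ implies $(\zeta\cdot u\cdot x)\wedge y=0$ by the $f$-algebra property of $\cdot$) which restricts to $*$ on $X$ and is semiprime precisely because $\zeta$ is a weak order unit. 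For uniqueness, let $\circ$ be any semiprime $f$-algebra multiplication on $X^u$ with $\circ|_{X\times X}=*|_{X\times X}$; applying the first step to $(X^u,\circ)$ (whose universal completion is again $X^u$ with the same natural multiplication) produces $\zeta'\in X^u_+$ with $a\circ b=\zeta'\cdot a\cdot b$ for all $a,b\in X^u$, and comparing with $*$ on the order‑dense set $X$ gives $(\zeta'-\zeta)\cdot x^2=0$ for all $x\in X_+$, hence $\zeta'=\zeta$ and $\circ=*$ on $X^u$.

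\emph{Main obstacle.} The crux is the representation $M_x=\,$``multiplication by $\theta(x)\in X^u$'', i.e.\ the identification of $\mathrm{Orth}(X)$ — acting on the order‑dense subspace $X$ — inside $X^u=\mathcal{R}{\downarrow}$; this is exactly where the Gordon theorem carries the load, bypassing the more laborious route through successive extensions of orthomorphisms from $X$ to $X^\delta$ and then to $X^u$. The second delicate step is the globalization of the locally defined quotients $\zeta_x$ into a single $\zeta\in X^u$, which rests on universal completeness of $X^u$ (equivalently, on the Boolean mixing principle in $V^{(B)}$).
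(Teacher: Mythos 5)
Your argument is correct in substance, but it travels a genuinely different road than the paper. The paper works \emph{globally} with the multiplication itself: it extends $*$ from $X$ to $X^\delta$ (via Proposition \ref{extension of multiplication to Dedekind completion}), then to $\hat{X}=\prod_\alpha B_{u_\alpha}$ for a maximal disjoint family $\{u_\alpha\}$, then to all of ${\mathcal R}{\downarrow}$ by the explicit formula $a*b:=a\cdot b\cdot w\cdot w\cdot(u*u)$ with $u\cdot w=1_{\mathcal R}$, and finally reads off $\zeta=1_{\mathcal R}*1_{\mathcal R}$ by transferring the one-dimensional fact of Example \ref{l-algebra $R$} (any $f$-algebra multiplication on $\mathbb{R}$ is determined by $1*1$) into $V^{(B)}$. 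You instead work \emph{locally} with the multiplication operators: you realize each $M_x$ as an orthomorphism, represent it as multiplication by $\theta(x)\in X^u$, and then manufacture $\zeta$ from the local quotients $\zeta_x=\theta(x)\cdot x^{-1}$ on $\{x\}^{dd}$, gluing them by the mixing principle. Your route buys an explicit uniqueness argument for the semiprime extension (which the paper only asserts, deferring to \cite[1.4.6(3)]{Ku}) and avoids the intermediate extension of $*$ to $X^\delta$ and $\hat X$; the paper's route buys a one-line identification of $\zeta$ and keeps every step inside results it has already proved. The one place where your write-up leans on something not literally contained in Theorem \ref{Gordon} as stated is the representation ``$M_x$ acting on the order-dense subspace $X$ of ${\mathcal R}{\downarrow}$ equals multiplication by a unique $\theta(x)\in X^u$'': Theorem \ref{Gordon} only gives the lattice identification $X^u={\mathcal R}{\downarrow}$, while the orthomorphism-to-multiplier representation is a separate (true, standard) theorem that should be cited, e.g.\ from \cite{Ku} or \cite{GKK}; alternatively you could bypass it entirely by defining $\zeta_x:=(x*x)\cdot(x\cdot x)^{-1}$ directly on $\{x\}^{dd}$ and checking consistency via $x*(x\cdot y)$-type identities, which would make the proof self-contained. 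The semiprimeness equivalence at the end is essentially identical to the paper's.
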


\begin{proof}
The idea of the proof can be outlined as follows:
in the first step we use that the $f$-al\-ge\-b\-ra multiplication $*$ in an Ar\-chi\-medean Riesz space $X$ extends 
to an $f$-algebra multiplication in $X^u$ and produces an $f$-algebra structure on the real field
${\mathcal R}$ in an appropriate Boolean-valued model $V^{(B)}$; in the second step we use the Boolean-valued interpretation of the fact that 
a non-trivial $f$-algebra structure on $\mathbb{R}$ is uniquely determined by fixing of its multiplicative unit 
(cf. Example \ref{l-algebra $R$}).

Take a maximal pairwise disjoint family $\{u_\alpha\}_{\alpha\in A}$ in $X_+$, and let ${\mathfrak{B}(B_{u_\alpha})}$ be the
Boolean algebra of all bands in the band $B_{u_\alpha}$ generated by $u_\alpha$ in $X^\delta$.
By Proposition \ref{extension of multiplication to Dedekind completion}, the $f$-algebra multiplication $(*)$ possesses the unique 
extension $(*)$ to $X^\delta$ and hence to each of $B_{u_\alpha}$.  
Clearly $(*)$ extends further to the $f$-algebra multiplication: 
$$ 
  (a_\alpha)_{\alpha\in A} * (b_\alpha)_{\alpha\in A}:=(a_\alpha*b_\alpha)_{\alpha\in A}
$$
 in the Dedekind complete Riesz space $\hat{X}=\prod_{\alpha\in A}B_{u_\alpha}$. 

Let $B:={\mathfrak{B}(X)}$. Clearly ${\mathfrak{B}(X)}={\mathfrak{B}}(\hat{X})={\mathfrak{B}}(X^u)$.
By the Gordon theorem, $u:=\sup_{\hat{X}}u_\alpha\in\hat{X}\subseteq{\mathcal R}\downarrow$ in $V^{(B)}$.
Since $u$ is a weak order unit in ${\mathcal R}\downarrow=X^u$, then $[\![u>0]\!]=1_B$. Take 
$w\in {\mathcal R}\downarrow$ such that $[\![ u\cdot w=1_{{\mathcal R}}]\!]=1_B$ and extend 
the $f$-algebra multiplication $(*)$ from $\hat{X}$ to ${\mathcal R}\downarrow$ by letting
$$
    a*b=(a\cdot u\cdot w)*(b\cdot u\cdot w):=a\cdot b\cdot w\cdot w\cdot (u*u) \ \ \ \ \ (\forall a,b\in {\mathcal R}\downarrow).
$$
Clearly 
$$
  [\![\ * \ \text{and} \ \cdot \ \text{are\ both \  f-algebra \ multiplications \ in} \ {\mathcal R} \ ]\!]=1_B.
$$
Denote $\zeta:=1_{{\mathcal R}}*1_{{\mathcal R}}$. Then, for all $a,b\in {\mathcal R}\downarrow$, 
$$
  [\![a*b=(a\cdot 1_{{\mathcal R}})*(b\cdot 1_{{\mathcal R}})=a\cdot b\cdot(1_{{\mathcal R}}*1_{{\mathcal R}})=\zeta\cdot a\cdot b]\!]=1_B,
$$
and hence $a*b=\zeta\cdot a\cdot b$ as required. 

Let $X$ be $*$-semiprime. Take $x\in X_+$ with $x\wedge\zeta=0$. Then $x*x=\zeta\cdot x\cdot x=0$ and, since $X$ is  $*$-semiprime, 
$x=0$ shows that $\zeta$ is a weak unit in $X^u$.

Let $\zeta$ be a weak order unit in $X^u$. Take $x\in X$ with $x*x=0$. Then $\zeta\cdot x\cdot x=0$ and hence $x\cdot x=0$.
As it was remarked above, $X^u$ is $\cdot$ -semiprime. Therefore $x=0$ which implies that $X$ is $*$-semiprime.
\end{proof}

\begin{theorem}\label{mo-conv in X is equiv to o-conv in X^u}
Let $x_\alpha$ be a net in an Archimedean semiprime $f$-algebra $X$ with algebra multiplication $*$ and weak order unit $w$.
Let $X^u$ be equipped with the unique semiprime $f$-algebra extension $*$ of the multiplication in $X$.
The following conditions are equivalent$:$
\begin{enumerate}
\item[$(i)$] \ $x_\alpha\convmo 0$ in $X$$;$
\item[$(ii)$] \ $x_\alpha\convo 0$ in $X^u$$;$
\item[$(iii)$] \ $x_\alpha\convmo 0$ in $X^u$.
\end{enumerate}
\end{theorem}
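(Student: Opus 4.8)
The plan is to prove the chain $(ii)\Longleftrightarrow(iii)$ first and then $(i)\Longleftrightarrow(ii)$, using the Gordon-theoretic representation from Proposition~\ref{universally complete f-algebra} together with the fact that $\mathbb{o}$-convergence is preserved under passing to and from a regular Riesz subspace. Throughout I will use that $X$ is order dense (hence regular) in $X^u$, so that for a net $x_\alpha$ in $X$ one has $x_\alpha\convo 0$ in $X$ iff $x_\alpha\convo 0$ in $X^u$ (this is the standard fact that $\mathbb{o}$-limits along a regular subspace agree); I will also use that, by Proposition~\ref{universally complete f-algebra}, the semiprime extension of $*$ to $X^u$ is given by $x*y=\zeta\cdot x\cdot y$ for a weak order unit $\zeta$, where $\cdot$ is the natural representational $f$-algebra multiplication of Theorem~\ref{Gordon}, and that $X^u=\mathcal{R}{\downarrow}$.

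For $(ii)\Longleftrightarrow(iii)$: work entirely inside the universally complete $f$-algebra $X^u$, which is a $d$-algebra (being an $f$-algebra) with $\mathbb{o}$-continuous multiplication. If $x_\alpha\convo 0$ in $X^u$, then for every $u\in X^u_+$, $\mathbb{o}$-continuity of multiplication by $u$ (which is a positive order continuous operator, cf.\ Lemma~\ref{about o-cont multiplication}) gives $u*x_\alpha\convo 0$, and since $X^u$ is a $d$-algebra, $u*|x_\alpha|=|u*x_\alpha|\convo 0$; since $\mathbb{o}$ is full and $0\le u*|x_\alpha|$... actually more directly, by Proposition~\ref{mc on commutative d-algebras} applied to the $d$-algebra $X^u$ with the full lattice convergence $\mathbb{o}$, we get $x_\alpha\convmo 0$ in $X^u$. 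Conversely, if $x_\alpha\convmo 0$ in $X^u$, take $u=e$ a multiplicative (hence weak order) unit for the natural multiplication, or more simply take $u=\zeta$'s multiplicative inverse in $\mathcal{R}{\downarrow}$: since $\zeta$ is a weak order unit, there is $w\in X^u_+$ with $\zeta\cdot w=e$; then $w*|x_\alpha|=\zeta\cdot w\cdot|x_\alpha|=e\cdot|x_\alpha|=|x_\alpha|\convo 0$, so $x_\alpha\convo 0$ in $X^u$. This settles $(ii)\Longleftrightarrow(iii)$.

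For $(i)\Longleftrightarrow(iii)$, i.e.\ reducing $\mathbb{mo}$ in $X$ to $\mathbb{mo}$ in $X^u$: the direction $(iii)\Longrightarrow(i)$ is easy since for $u\in X_+\subseteq X^u_+$, $u*|x_\alpha|\convo 0$ in $X^u$ and, the net $u*|x_\alpha|$ lying in $X$ (as $X$ is closed under $*$), regularity of $X$ in $X^u$ gives $u*|x_\alpha|\convo 0$ in $X$, whence $x_\alpha\convmo 0$ in $X$. For $(i)\Longrightarrow(iii)$ the point is to upgrade ``$u*|x_\alpha|\convo 0$ for all $u\in X_+$'' to ``for all $u\in X^u_+$''. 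Here I would use that every $u\in X^u_+$ is a supremum (in $X^u$) of an upward directed family of elements of $X_+$ — no, that is false in general; instead, use the weak order unit $w$ of $X$: from $(i)$ we get $w*|x_\alpha|=\zeta\cdot w\cdot|x_\alpha|\convo 0$ in $X$ hence in $X^u$. Since $w$ is a weak order unit in $X$ and $X$ is order dense in $X^u$, $w$ is a weak order unit in $X^u$; but $\zeta\cdot w$ need not be a unit. The cleaner route: apply $(i)$ with $u$ ranging over $X_+$, and use that for the natural multiplication any fixed $v\in X^u_+$ satisfies $v\wedge (nw)\uparrow v$ with $v\wedge(nw)$... these still need not be in $X$. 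The main obstacle, and where I would spend the real effort, is exactly this step: showing that $\mathbb{mo}$-nullity tested against $X_+$ implies $\mathbb{mo}$-nullity tested against $X^u_+$. I expect the resolution to go through the Boolean-valued transfer: inside $V^{(\mathfrak{B}(X))}$, the statement ``$x_\alpha\convmo 0$'' becomes a statement about the real $\mathcal R$, where the $f$-algebra multiplication is just ordinary multiplication up to the scalar $\zeta$, so $(i)$ is equivalent to $\llbracket \zeta\cdot|x_\alpha|\to 0\rrbracket$ being large in the appropriate sense, and since $\zeta$ is a weak order unit, $\llbracket\zeta>0\rrbracket=1_B$, which lets one divide by $\zeta$ inside the model and recover $\llbracket|x_\alpha|\to0\rrbracket$, i.e.\ $x_\alpha\convo 0$ in $X^u$, hence $(ii)$, hence $(iii)$. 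Thus the equivalence $(i)\Longleftrightarrow(ii)$ is really the assertion that $\mathbb{mo}$-convergence in $X$ coincides with $\mathbb{o}$-convergence in $X^u$, proved by passing to the Boolean-valued model, dividing by the (strictly positive) scalar $\zeta$, and descending.
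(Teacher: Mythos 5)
Your decomposition of the equivalence is sound, your handling of $(ii)\Leftrightarrow(iii)$ and $(iii)\Rightarrow(i)$ agrees in substance with the paper's, and you have correctly located the crux: getting from $\mathbb{mo}$-nullity in $X$, tested only against $u\in X_+$, to $\mathbb{o}$-nullity in $X^u$. But precisely there your argument has a gap, and you talk yourself out of the route that actually works. You observe that $(i)$ gives $w*|x_\alpha|=\zeta\cdot w\cdot|x_\alpha|\convo 0$ in $X$, hence in $X^u$ by regularity, and then abandon this because ``$\zeta\cdot w$ need not be a unit.'' It need not be a \emph{multiplicative} unit, but it does not have to be: by Proposition \ref{universally complete f-algebra} semiprimeness of $X$ makes $\zeta$ a weak order unit of $X^u$, and $w$, being a weak order unit of the order dense sublattice $X$, is a weak order unit of $X^u$ as well; hence $[\![\zeta>0]\!]=[\![w>0]\!]=1_B$ forces $[\![\zeta\cdot w>0]\!]=1_B$, so $\zeta\cdot w$ is invertible in ${\mathcal R}\downarrow=X^u$. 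Taking $\varsigma$ with $\varsigma\cdot\zeta\cdot w=1_{\mathcal R}$ and using the $\mathbb{o}$-continuity of the natural multiplication in $X^u$, one obtains $|x_\alpha|=\varsigma\cdot(\zeta\cdot w\cdot|x_\alpha|)\convo 0$ in $X^u$, i.e. $(ii)$. This is exactly the paper's proof of $(i)\Rightarrow(ii)$, and it is the very same inversion trick that you yourself apply, correctly, in your proof of $(iii)\Rightarrow(ii)$.

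Your fallback sketch (``divide by $\zeta$ inside the model'') is not yet a proof and, as written, divides by the wrong element: hypothesis $(i)$ only yields $\zeta\cdot u\cdot|x_\alpha|\convo 0$ for test elements $u\in X_+$, so the one instance you can actually use is $u=w$, and you must then divide by $\zeta\cdot w$, not by $\zeta$ alone (dividing by $\zeta$ would leave you with $w\cdot|x_\alpha|\convo 0$, which is not yet $(ii)$). You also need to justify that division preserves the $\mathbb{o}$-convergence of the \emph{net} --- this is the statement that multiplication by the fixed element $\varsigma$ is an order continuous operator on $X^u$, which the paper invokes explicitly --- rather than leaving it to an unspecified ``descent.'' With these two points supplied your outline closes and coincides with the paper's argument; without them the key implication $(i)\Rightarrow(ii)$ is not established.
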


\begin{proof}
We apply Proposition \ref{universally complete f-algebra} to the standard semiprime $f$-algebra multiplication $\cdot$ in $X^u$ and the  
extended $f$-algebra multiplication $*$ in $X^u$ given by $a*b=\zeta\cdot a\cdot b$ for all $a,b\in X^u={\mathcal R}\downarrow$ and an appropriate 
weak unit $\zeta\in X^u={\mathcal R}\downarrow$.

$(i)\Longrightarrow(ii)$ Let $x_\alpha\convmo 0$ in $X$. Then $w*x_\alpha\convo 0$ in $X$ and hence in $X^u$.
Since $X$ is regular in $X^u={\mathcal R}\downarrow$, $w*x_\alpha\convo 0$ in ${\mathcal R}\downarrow$,
and hence $\zeta\cdot w\cdot x_\alpha\convo 0$. Taking $\varsigma\in{\mathcal R}\downarrow$ such that $\varsigma\cdot\zeta\cdot w=1_{{\mathcal R}}$ 
and using $\mathbb{o}$-continuity of the algebra multiplication $\cdot$ in $X^u$, we obtain
$x_\alpha=(\varsigma\cdot\zeta\cdot w)\cdot x_\alpha=\varsigma\cdot(\zeta\cdot w\cdot x_\alpha)\convo 0$ in $X^u$.

$(ii)\Longrightarrow(iii)$ follows from Proposition \ref{o-convergence is full lattice and T_1} and Theorem \ref{c=mc} 
since $X^u$ is unital and the multiplication in $X^u$ is $\mathbb{o}$-continuous.

$(iii)\Longrightarrow(i)$ is trivial since $X$ is an $f$-subalgebra of $X^u$.  
\end{proof}

\begin{remark}\label{related}
\begin{enumerate}
In circumstances of Theorem $\ref{mo-conv in X is equiv to o-conv in X^u}$$:$
\item[$(i)$] \ We do not know whether or not the statement of  Theorem $\ref{mo-conv in X is equiv to o-conv in X^u}$ remains true without the assumption that $X$ has a weak order unit.
\item[$(ii)$] \ Theorem $\ref{mo-conv in X is equiv to o-conv in X^u}$ implies $\mathbb{mo}=\mathbb{o}\subseteq\mathbb{uo}$ in $X^u$.
\item[$(iii)$] \ By Theorem $4$ of $\cite{EGK}$, Theorem $\ref{mo-conv in X is equiv to o-conv in X^u}$  implies that $\mathbb{mo}=\mathbb{uo}$ in $X^u$ iff $\dim(X)<\infty$.
\item[$(iv)$] \ It follows from Theorem $\ref{mo-conv in X is equiv to o-conv in X^u}$ that $\mathbb{umo}=\mathbb{uo}$ in $X^u$.
\item[$(v)$] \ In view of Theorem $\ref{mo-conv in X is equiv to o-conv in X^u}$,  Theorem $4$ of $\cite{EGK}$  implies that  $\mathbb{umo}=\mathbb{mo}$ in $X^u$ iff $\dim(X)<\infty$.
\end{enumerate}
\end{remark}

\begin{example}\label{almost f-algebra mult is not extendable to X^u in general}
Consider the $l$-algebra $X=(\ell_\infty,*)$ from Example $\ref{almost f-algebra in which multiplication is not 0-cont}$. 
There is no $l$-algebra multiplication extension of $*$ to $X^u=s$. To see this take $x:=(k)_{k=1}^\infty\in s$.
Then, in the notations of Example $\ref{almost f-algebra in which multiplication is not 0-cont}$, 
$$
  \mathbb{1}*x\ge\mathbb{1}*n\mathbb{1}_n=n\mathbb{1} \ \ \ \ (\forall n\in\mathbb{N}),
$$
which shows that $\mathbb{1}*x$ cannot be defined in the Riesz space $s$.
\end{example}

The following lemma could be known. As we did not find it in an available literature, we also include its proof.   

\begin{lemma}\label{B_u=B_u^2}
Let $X$ be a $\sigma$-Dedekind complete $f$-algebra. The following conditions are equivalent$:$\\
$(i)$ \ $X$ is semiprime$;$\\
$(ii)$ \ $B_u=B_{u^2}$ for every $u\in X_+$, where $B_u$ is a band generated by $u$.
\end{lemma}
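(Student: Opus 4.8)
The plan is to prove the two implications separately, using the standard fact (valid in any $f$-algebra) that $|x\cdot y|=|x|\cdot|y|$ and that $u\cdot x=0$ with $u,x\in X_+$ implies $u\wedge x$ can fail to be $0$, but $u^2=0$ forces $u=0$ when $X$ is semiprime (Remark \ref{various properties of l-algebras}$(i)$). Throughout I would work with positive elements, since $B_u=B_{|u|}$ and similarly for $u^2$.

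For $(i)\Longrightarrow(ii)$, fix $u\in X_+$. The inclusion $B_{u^2}\subseteq B_u$ is immediate because $0\le u^2=u\cdot u$ and multiplication by $u$ is order bounded, so $u^2$ lies in the band $B_u$ (indeed $u^2\le \|u\|\cdot u$ is false in general, but $u^2\wedge nu\uparrow u^2$ is not what we need — rather, $u^2$ belongs to the ideal generated by $u$ since in an $f$-algebra $u\cdot v$ lies in the band generated by $v$ for $v\in X_+$, by \cite[Prop.10.2]{Pag} or the orthomorphism property). For the reverse inclusion $B_u\subseteq B_{u^2}$, it suffices to show $u\in B_{u^2}$, i.e. that $u$ is disjoint from $(B_{u^2})^d=\{x: |x|\wedge u^2=0\}$. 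So suppose $x\in X_+$ with $x\wedge u^2=0$; I must show $x\wedge u=0$. Since $X$ is an $f$-algebra, $x\wedge u^2=0$ gives $x\cdot u\wedge u=0$ — wait, the relevant deduction is: put $y=x\wedge u\ge 0$; then $y\le u$, so $y^2\le y\cdot u\le u^2$, and also $y^2\le y\cdot x$. Using the $\sigma$-Dedekind completeness and semiprimeness together with the $f$-algebra axiom, one shows $y^2\le x\wedge u^2=0$ — more carefully: $y\le x$ and $y\le u$ give $y^2=y\cdot y\le x\cdot u$; since in an $f$-algebra $x\wedge u=y$ and $x\cdot u \le (x\wedge u)\cdot(x\vee u)$... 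I would instead argue directly that $y\wedge u=y$ and $y^2\le u^2$ while $y^2\le y\cdot x \le$ (band generated by $x$), and $y^2\in B_{u^2}\cap B_x$. Since $x\wedge u^2=0$ we get $x\perp u^2$, hence $B_x\perp B_{u^2}$, hence $y^2\in B_x\cap B_{u^2}=\{0\}$, so $y^2=0$, and semiprimeness gives $y=0$, i.e. $x\wedge u=0$, as desired.

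For $(ii)\Longrightarrow(i)$, suppose $B_u=B_{u^2}$ for all $u\in X_+$, and let $x\in X$ with $x^2=0$; I show $x=0$. Replacing $x$ by $|x|$ (note $|x|^2=|x^2|=0$ in the $f$-algebra, using $|x\cdot y|=|x|\cdot|y|$), assume $x\in X_+$. Then $x^2=0$ means $B_{x^2}=\{0\}$, so by hypothesis $B_x=\{0\}$, which forces $x=0$. Hence the only nilpotent of square zero is $0$, and by Remark \ref{various properties of l-algebras}$(i)$ this means $X$ is semiprime.

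The main obstacle I anticipate is the reverse inclusion $B_u\subseteq B_{u^2}$ in $(i)\Longrightarrow(ii)$: one must pin down the precise $f$-algebra manipulation showing that $x\wedge u^2=0$ propagates to $x\wedge u=0$. The clean route is via disjointness of bands — $x\perp u^2$ implies the band projection onto $B_{u^2}$ annihilates $x$ — combined with the observation that $(x\wedge u)^2$ lies in both $B_x$ and $B_{u^2}$ (the latter because $(x\wedge u)^2\le u^2$), whence $(x\wedge u)^2=0$ and semiprimeness finishes it. The $\sigma$-Dedekind completeness hypothesis is used only to guarantee that the bands $B_u$, $B_{u^2}$ are projection bands so that this disjointness argument is available; I would flag that this is where it enters.
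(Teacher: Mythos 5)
Your proof is correct, but for the implication $(i)\Longrightarrow(ii)$ it takes a genuinely different route from the paper's. The paper argues by contradiction: it picks $0<z\in B_u\setminus B_{u^2}$, uses the principal projection property (this is exactly where $\sigma$-Dedekind completeness enters) to form $v:=z-pr_{u^2}z>0$ lying in $B_u\cap B_{u^2}^d$, approximates $v$ from below by a net $w_\alpha$ in the ideal $I_u$, and invokes $\mathbb{o}$-continuity of the multiplication together with Lemma \ref{product convergence} to get $w_\alpha^2\uparrow v^2\in B_{u^2}$; since also $v^2\wedge u^2=0$, it concludes $v^2=0$, contradicting semiprimeness. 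You instead show directly that $\{u^2\}^d\subseteq\{u\}^d$: for $x\in X_+$ with $x\wedge u^2=0$, the element $y:=x\wedge u$ satisfies $0\le y^2\le u^2$ and $0\le y^2\le x^2\in\{x\}^{dd}$, so $y^2\in B_x\cap B_{u^2}=\{0\}$, and semiprimeness gives $y=0$. This is more elementary: it needs neither band projections nor the order-continuity/approximation step, and it in fact establishes $(i)\Longrightarrow(ii)$ in any Archimedean semiprime $f$-algebra. Consequently your closing remark --- that $\sigma$-Dedekind completeness is used to make $B_u$ and $B_{u^2}$ projection bands ``so that the disjointness argument is available'' --- mischaracterizes your own proof: the identity $B_x\cap B_{u^2}=\{0\}$ for $x\perp u^2$ holds in any Riesz space, and you never actually invoke a band projection. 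Both arguments share the same endgame (exhibit a positive element of square zero and contradict semiprimeness), but your choice of that element, $x\wedge u$ rather than $z-pr_{u^2}z$, is what eliminates the need for the completeness hypothesis. The only blemish is expository: the several abandoned false starts preceding the final $y=x\wedge u$ computation should be cut, since the clean argument you eventually give is self-contained.
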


\begin{proof}
$(ii)\Longrightarrow(i)$ is trivial.
  
$(i)\Longrightarrow(ii)$ Clearly $B_{u^2}\subseteq B_u$.  Let $0<z\in B_u\setminus B_{u^2}$. Since $X$ has the principle projection property,
$pr_{u^2}z$ exists. Since $z\not\in B_{u^2}$ then $v:=z-pr_{u^2}z>0$. Clearly $v\in  B_u\cap B_{u^2}^d$.
Since $v\in B_u$, there exists a net $w_{\alpha}\in I_u$ with $0\le w_{\alpha}\uparrow v$. Since the algebra multiplication in any $f$-algebra is 
$\mathbb{o}$-continuous and since $\mathbb{o}$-convergence on $X$ is a linear full lattice convergence due to Proposition \ref{o-convergence is full lattice and T_1}, 
Lemma \ref{product convergence} implies $w_{\alpha}^2\uparrow v^2$. Clearly $w_{\alpha}^2\in I_{u^2}$ for any $\alpha$.
Therefore $v^2\in B_{u^2}$. Since $v\in  B_u\cap B_{u^2}^d$ then $v\wedge u^2=0$ and since $X$ is an $f$-algebra, we obtain $v^2\wedge u^2=0$
or $v^2\in B_{u^2}^d$. Combining this with $v^2\in B_{u^2}$ gives  $v^2=0$ on the contrary with $v>0$ as $X$ is semiprime.
\end{proof}

\begin{theorem}\label{cont of uo-multiplication}
The algebra multiplication is $\mathbb{uo}$-continuous in any $\sigma$-Dedekind complete semiprime $f$-algebra $X$.
\end{theorem}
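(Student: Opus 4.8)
The plan is to show that $\mathbb{uo}$-continuity of multiplication reduces, via Lemma \ref{lattice convergence and c-cont multiplication} applied to $\mathbb{c}=\mathbb{uo}$, to the following statement: if $x_\alpha\convuo 0$ in $X$ with $x_\alpha\in X_+$ and $y\in X_+$ is fixed, then $y\cdot x_\alpha\convuo 0$. (The convergence $\mathbb{uo}$ is linear, full, and lattice by Theorem \ref{uc convergence}, since $\mathbb{o}$ is, so Lemma \ref{lattice convergence and c-cont multiplication} applies.) Unwinding the definition of $\mathbb{uo}$, we must verify that $u\wedge(y\cdot x_\alpha)\convo 0$ for every $u\in X_+$, knowing that $v\wedge x_\alpha\convo 0$ for every $v\in X_+$.

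The key step is a pointwise (band-by-band) estimate relating $u\wedge(y\cdot x_\alpha)$ to a truncation of $x_\alpha$. First I would reduce to the band $B_u$ generated by $u$: since $X$ is an $f$-algebra, $y\cdot x_\alpha$ and its truncations behave well with respect to bands, and $u\wedge(y\cdot x_\alpha)$ lives in $B_u$. Within $B_u$, the element $y$ can be split: write $y = y_1 + y_2$ where $y_1\in B_u$ and $y_2\in B_u^d$; since $X$ is an $f$-algebra, $y_2\cdot z\in B_u^d$ for all $z$, hence $u\wedge(y\cdot x_\alpha)=u\wedge(y_1\cdot x_\alpha)$, so we may assume $y\in B_u$. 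Then I would use Lemma \ref{B_u=B_u^2}: by semiprimeness, $B_u = B_{u^2}$, so $y\in B_{u^2}$, meaning there is a net (or, since only order boundedness and the band structure matter, we can use) an approximation by multiples of $u^2$. Concretely, for the truncation argument: given $\varepsilon>0$ consider $(y\cdot x_\alpha - \varepsilon u)^+$ versus $(x_\alpha - \delta u)^+$ for suitable $\delta$. The cleanest route is to bound $u\wedge(y\cdot x_\alpha)$ above by $u\wedge(n\cdot u^2\cdot x_\alpha) + r_\alpha$ where $n$ is chosen so that $y\le n u^2$ holds on a large band and $r_\alpha$ is a controlled remainder, then observe $u\wedge(n u^2 x_\alpha)\le n u\cdot(u\wedge x_\alpha)$ using the $f$-algebra inequality $u\cdot(a\wedge b)\le (u\cdot a)\wedge(u\cdot b)$ together with $u\wedge(u\cdot c)\le u\cdot(e\wedge c)$-type estimates — but since $X$ need not be unital, I would instead argue directly: $u\wedge(u^2\cdot x_\alpha)\le u\cdot(u\wedge x_\alpha)$, which holds in any $f$-algebra because $u^2\cdot x_\alpha = u\cdot(u\cdot x_\alpha)$ and $u\cdot(u\wedge x_\alpha)\ge$ both $u\cdot u\wedge 0\ge 0$ hmm — this needs the genuine $f$-algebra inequality $a\cdot(b\wedge c)\le(a\cdot b)\wedge(a\cdot c)$ valid for $a,b,c\ge0$ in a $d$-algebra (Definition \ref{various lattice algebras}(a)), giving $u^2 x_\alpha = u\cdot(u x_\alpha)$ and comparing with $u\cdot u$. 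The remainder term $r_\alpha$, arising from approximating $y$ by $n u^2$ on $B_u=B_{u^2}$, is dominated by the "tail" which vanishes in $\mathbb{o}$ uniformly, and here the $\sigma$-Dedekind completeness is used to form the approximating sequence and to extract the principal projections.

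The main obstacle is handling the non-unital case cleanly and making the band-splitting and the $B_u=B_{u^2}$ approximation interact correctly with the order convergence of $x_\alpha$: one must produce, for each $\varepsilon>0$, a single band projection $p=p_\varepsilon$ and an integer $n=n_\varepsilon$ such that $p(y)\le n u^2$ while $(1-p)u\wedge(y\cdot x_\alpha)$ is small, and then bound $p u\wedge(y x_\alpha)\le p u\wedge(n u^2 x_\alpha)\le n\,(u\wedge x_\alpha)\cdot u$-type quantity that $\mathbb{o}$-converges to $0$ because $u\wedge x_\alpha\convo 0$ and multiplication by the fixed element $n u$ is $\mathbb{o}$-continuous (every $f$-algebra has $\mathbb{o}$-continuous multiplication, Remark \ref{various properties of l-algebras}(vii)). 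I would assemble these pieces: fix $u\in X_+$, reduce to $y\in B_u=B_{u^2}$, use $\sigma$-Dedekind completeness to get band projections $p_n\uparrow$ with $p_n y\le n u^2$ and $(\mathrm{id}-p_n)y\cdot x_\alpha$ controlled (indeed $(\mathrm{id}-p_n)y\downarrow 0$ as $n\to\infty$ since $y\in B_{u^2}$), then for each fixed $n$ use $\mathbb{o}$-continuity of $\cdot$ and $u\wedge x_\alpha\convo 0$ to kill the main term, and let $n\to\infty$ to kill the remainder — a standard $\varepsilon/n$ double-limit argument for $\mathbb{o}$-convergence. This yields $u\wedge(y\cdot x_\alpha)\convo 0$ for all $u\in X_+$, i.e. $y\cdot x_\alpha\convuo 0$, completing the proof via Lemma \ref{lattice convergence and c-cont multiplication}.
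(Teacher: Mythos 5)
Your overall strategy (reduce to positive nets via Lemma \ref{lattice convergence and c-cont multiplication}, localize to the band $B_u$, and invoke Lemma \ref{B_u=B_u^2}) starts in the right direction, but the step that is supposed to carry the whole argument is false. You claim that the inequality $u\wedge(u^2\cdot x_\alpha)\le u\cdot(u\wedge x_\alpha)$ ``holds in any $f$-algebra''. It does not: already in $\mathbb{R}$ with the usual multiplication (a semiprime unital $f$-algebra), taking $u=\tfrac12$ and $x=10$ gives $u\wedge(u^2 x)=\min(\tfrac12,\tfrac52)=\tfrac12$, while $u\cdot(u\wedge x)=\tfrac14$. What the $d$-algebra law actually yields is the \emph{equality} $u\cdot(v\wedge|x_\alpha|)=(u\cdot v)\wedge(u\cdot|x_\alpha|)$, so from $v\wedge|x_\alpha|\convo 0$ and $\mathbb{o}$-continuity of the multiplication you only learn that $u\cdot x_\alpha$ order-converges to $0$ after truncation by elements of the form $u\cdot v$ --- in particular by $u^2$ --- and not after truncation by an arbitrary $w\in X_+$. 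Converting ``$u^2\wedge|u\cdot x_\alpha|\convo 0$'' into genuine $\mathbb{uo}$-convergence is precisely the nontrivial point, and no elementary lattice inequality of the kind you propose can accomplish it.

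The paper closes this gap differently: since $u\cdot|x_\alpha|\in B_u$ and, by Lemma \ref{B_u=B_u^2} (this is exactly where semiprimeness and $\sigma$-Dedekind completeness enter), $B_u=B_{u^2}$, the element $u^2$ is a weak order unit of $B_u$; by \cite[Cor.3.5]{GTX}, $\mathbb{uo}$-convergence in a Riesz space with a weak unit is detected by that single weak unit, so $u^2\wedge|u\cdot x_\alpha|\convo 0$ already gives $u\cdot x_\alpha\convuo 0$ in $B_u$, and \cite[Thm.3.2]{GTX} lifts this to $X$ because $B_u$ is regular. Your truncation of $y$ by band projections $p_n$ with $p_n y\le nu^2$ does not substitute for this: for each fixed $n$ the main term is still only tested against multiples of $u^2$ rather than against $u$, and the remainder $u\wedge((\mathrm{id}-p_n)y\cdot x_\alpha)$ is not supplied with a dominating net independent of $\alpha$, so the proposed $\varepsilon/n$ double limit is not justified. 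If you wish to avoid citing \cite{GTX}, you would have to reprove its Corollary 3.5 via the standard weak-unit truncation $w\wedge z\le(w-ku^2)^{+}+k(u^2\wedge z)$ for $w\in B_{u^2}$, which is a different truncation from the one you set up.
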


\begin{proof}
Let $u\in X_+$ and $x_{\alpha}\convuo 0$ in $X$. By Lemma \ref{B_u=B_u^2}, $u\cdot|x_{\alpha}|\in B_u=B_{u^2}$ for all $\alpha$. 
Since the algebra multiplication in $X$ is $\mathbb{o}$-continuous, 
$$
    u\cdot v\wedge |u\cdot x_{\alpha}|=u\cdot v\wedge u\cdot|x_{\alpha}|=u\cdot(v\wedge|x_{\alpha}|)\convo 0 \ \ \ (\forall v\in X_+).
$$
In particular, $u^2\wedge |u\cdot x_{\alpha}|\convo 0$ in $B_{u^2}=B_u$. It follows from \cite[Cor.3.5]{GTX} that $u\cdot x_{\alpha}\convuo 0$ in $B_{u^2}=B_u$. 
Since $B_u$ is regular in $X$, \cite[Thm.3.2]{GTX} implies $u\cdot x_{\alpha}\convuo 0$ in $X$, as required.
\end{proof}

\begin{corollary}\label{cont of uo-multiplication in o-complete}
The algebra multiplication is $\mathbb{uo}$-continuous in any Dedekind complete $f$-algebra $X$.
\end{corollary}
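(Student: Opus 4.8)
The plan is to reduce everything to Theorem~\ref{cont of uo-multiplication} by splitting off the nilpotent part of $X$ as a band. Since $X$ is Dedekind complete it is Archimedean, so $\mathbb{o}$ is a linear full lattice convergence on $X$ by Proposition~\ref{o-convergence is full lattice and T_1}, and hence $\mathbb{uo}=\mathbb{u}_X\mathbb{o}$ is a linear full lattice convergence on $X$ by Theorem~\ref{uc convergence}; in particular, by Lemma~\ref{lattice convergence and c-cont multiplication}, it suffices to prove that $x_\alpha\convuo 0\Rightarrow u\cdot x_\alpha\convuo 0$ for every net $x_\alpha$ in $X_+$ and every $u\in X_+$.

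First I would establish that $N(X)=\{x\in X:x^2=0\}$ is a projection band which coincides with the annihilator $A(X):=\{x\in X:x\cdot X=\{0\}\}$. Using $|x\cdot y|=|x|\cdot|y|$ in $f$-algebras (Remark~\ref{various properties of l-algebras}) together with \cite[Prop.10.2]{Pag} (a nilpotent element annihilates $X$), one checks that $N(X)$ is a solid subspace with $N(X)=A(X)$. To see that it is a band, take $0\le y_\alpha\uparrow y$ in $X$ with $y_\alpha\in N(X)$; the net $y_\alpha$ is order bounded, the multiplication in an Archimedean $f$-algebra is $\mathbb{o}$-continuous, so Lemma~\ref{product convergence} gives $y_\alpha^2\convo y^2$, whence $y^2=0$ since $\mathbb{o}\in T_1$, i.e. $y\in N(X)$. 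An order ideal closed under bounded monotone suprema in the Dedekind complete space $X$ is a band, so $X=N(X)\oplus N(X)^d$ with band projection $P$ onto $N(X)^d$. Being a band in an $f$-algebra, $N(X)^d$ is an $l$-ideal, hence an $f$-subalgebra of $X$; it is Dedekind complete, and since $N(X)\cap N(X)^d=\{0\}$ it is semiprime.

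Now take $x_\alpha\convuo 0$ with $x_\alpha\in X_+$ and $u\in X_+$. Writing $u=Pu+(I-P)u$ and $x_\alpha=Px_\alpha+(I-P)x_\alpha$, and using that $(I-P)u,(I-P)x_\alpha\in N(X)=A(X)$, we obtain $u\cdot x_\alpha=(Pu)\cdot(Px_\alpha)$, a net in $N(X)^d$. From $0\le Px_\alpha\le x_\alpha$ and fullness of $\mathbb{uo}$ it follows that $Px_\alpha\convuo 0$ in $X$, and then $Px_\alpha\convuo 0$ in $N(X)^d$ because $N(X)^d$ is a regular sublattice of $X$ (\cite[Thm.3.2]{GTX}). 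Applying Theorem~\ref{cont of uo-multiplication} to the $\sigma$-Dedekind complete semiprime $f$-algebra $N(X)^d$ yields $(Pu)\cdot(Px_\alpha)\convuo 0$ in $N(X)^d$, and once more by \cite[Thm.3.2]{GTX} this lifts back to $u\cdot x_\alpha\convuo 0$ in $X$, as required.

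The step I expect to be the main obstacle is the band claim: verifying carefully that $N(X)$ is a solid subspace equal to the annihilator and is closed under bounded monotone suprema — this is where both \cite[Prop.10.2]{Pag} and the $\mathbb{o}$-continuity of multiplication (via Lemma~\ref{product convergence}) enter — together with the bookkeeping needed to transport $\mathbb{uo}$-convergence between $X$ and the band $N(X)^d$ in both directions. Everything after that is a direct appeal to Theorem~\ref{cont of uo-multiplication}.
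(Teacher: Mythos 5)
Your proposal is correct and follows exactly the paper's route: the paper's own proof is the one-line remark that the corollary ``follows by applying Theorem~\ref{cont of uo-multiplication} to the band decomposition $X=N(X)+N(X)^d$,'' and you have simply supplied the details of that reduction (that $N(X)$ coincides with the annihilator and is a projection band, that $N(X)^d$ is a Dedekind complete semiprime $f$-subalgebra, and the transfer of $\mathbb{uo}$-convergence between $X$ and the regular sublattice $N(X)^d$). All of these verifications are sound, so nothing further is needed.
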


\begin{proof}
It follows by applying Theorem \ref{cont of uo-multiplication} to the band decomposition $X=N(X)+N(X)^d$.
\end{proof}

\begin{theorem}\label{multiplication in any universally complete f-algebra is uo-cont}
The algebra multiplication in any universally complete $f$-algebra $X$ is $\mathbb{uo}$-continuous.
\end{theorem}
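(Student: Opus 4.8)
The quickest route is to recall that a universally complete Riesz space is, by definition, Dedekind complete, so the assertion is a special case of Corollary~\ref{cont of uo-multiplication in o-complete}. Since the proof of that corollary is only sketched, the plan is to carry out the underlying band-decomposition argument in full; in the universally complete setting it is especially transparent.

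First I would verify that the set $N(X)$ of nilpotent elements is a band. As $X$ is an Archimedean $f$-algebra, it is a $d$-algebra with $\mathbb{o}$-continuous multiplication (Remark~\ref{various properties of l-algebras}), so $|x\cdot y| = |x|\cdot|y|$, and \cite[Prop.10.2]{Pag} yields $N(X) = \{x \in X : u\cdot|x| = 0 \text{ for all } u \in X_+\}$; this set is solid, and $\mathbb{o}$-continuity of the multiplication together with Dedekind completeness makes it closed under suprema of increasing nets, hence a band. Both $N(X)$ and $N(X)^d$ are $\mathbb{o}$-closed order ideals, hence $\mathbb{r}$-closed, hence $l$-ideals by Remark~\ref{about l-ideals}. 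Because every element of $N(X)$ annihilates $X$, writing $x = x' + x''$ and $y = y' + y''$ with $x',y' \in N(X)$ and $x'',y'' \in N(X)^d$ gives $x\cdot y = x''\cdot y'' \in N(X)^d$, so the multiplication respects the decomposition $X = N(X)\oplus N(X)^d$. Next, invoking the localization theorems for unbounded order convergence with respect to bands and regular sublattices (\cite[Thm.3.2, Cor.3.5]{GTX}), $x_\alpha \convuo 0$ in $X$ holds if and only if its two band-components are $\mathbb{uo}$-null in $N(X)$ and in $N(X)^d$, respectively. Finally, on $N(X)$ the multiplication is identically zero and hence trivially $\mathbb{uo}$-continuous, while $N(X)^d$ is a Dedekind complete semiprime $f$-algebra, so Theorem~\ref{cont of uo-multiplication} applies there; reassembling the two components shows $y\cdot x_\alpha \convuo 0$ whenever $y \in X_+$ and $x_\alpha \convuo 0$. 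Since $\mathbb{uo}$ is a linear convergence and a general $y$ splits as $y^+ - y^-$, this upgrades to $x_\alpha \convuo x \Rightarrow y\cdot x_\alpha \convuo y\cdot x$ for all $x,y \in X$, which is the claim.

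The one step deserving care is the band-decomposition passage: that an $\mathbb{uo}$-null net splits into band-projections that are themselves $\mathbb{uo}$-null, and that $\mathbb{uo}$-convergence inside the band $N(X)^d$ agrees with $\mathbb{uo}$-convergence computed in $X$. Both are instances of the standard behaviour of unbounded order convergence under passage to bands and regular sublattices, so with \cite{GTX} cited the argument is routine; the remainder is bookkeeping with Archimedean $f$-algebra identities.
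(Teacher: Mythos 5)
Your proof is correct and takes essentially the same route as the paper: the band decomposition $X=N(X)\oplus N(X)^d$, triviality of the multiplication on the nilpotent part, and Theorem~\ref{cont of uo-multiplication} applied to the semiprime part $N(X)^d$. You merely fill in the details the paper's one-line proof leaves implicit (and correctly observe that the statement is already contained in Corollary~\ref{cont of uo-multiplication in o-complete}), without needing the representation from Proposition~\ref{universally complete f-algebra} that the paper cites.
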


\begin{proof}
Form the band decomposition $X=N(X)+N(X)^d$ and apply Proposition \ref{universally complete f-algebra} and then Theorem \ref{cont of uo-multiplication}
to its second part. In the first part of the band decomposition the conclusion is trivial. 
\end{proof}

\section{${\text{\boldmath{$m\tau$}}}$-Convergence on Commutative $l$-Algebras}

In the end of the paper we present the following description of the ${\text{\boldmath{$m\tau$}}}$-convergence on locally full commutative $l$-algebras. 

\begin{theorem}\label{mtau-topology}
Let $X$ be a commutative $l$-algebra $X$ with a linear locally full topology $\tau$. Then the ${\text{\boldmath{$m\tau$}}}$-convergence on $X$ 
is topological with respect to a linear locally solid topology $\tau_m$ on $X$. 
\begin{enumerate}
\item[$(i)$] \ If $X$ has a positive algebra unit $e$, and $\tau$ is Hausdorff, then the $\tau_m$-topology is Hausdorff. 
\item[$(ii)$] \ If the multiplication in $X$ is ${\text{\boldmath{$\tau$}}}$-continuous, and $\tau$ is locally solid, then $\tau_m\subseteq\tau$. 
\end{enumerate}
\end{theorem}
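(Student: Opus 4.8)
The plan is to produce $\tau_m$ explicitly, imitating the construction of the ``unbounded topology'' in Remark~\ref{utau convergence}$(i)$ but with the lattice infimum $u\wedge(\cdot)$ replaced throughout by the algebra multiplication $u\cdot(\cdot)$. Let $\mathcal{T}$ be the family of all full $\tau$-neighborhoods of zero; since $\tau$ is locally full, $\mathcal{T}$ is a base at zero for $\tau$. For $u\in X_+$ and $V\in\mathcal{T}$ set
$$
  U_{u,V}:=\{x\in X: u\cdot|x|\in V\},
$$
and put $B:=\{U_{u,V}: u\in X_+,\ V\in\mathcal{T}\}$. First I would check, exactly as in Remark~\ref{utau convergence}$(i)$, that $B$ is a base at zero of a linear topology $\tau_m$ on $X$: each $U_{u,V}$ is absorbing (continuity of scalar multiplication in $\tau$ gives $\varepsilon(u\cdot|x|)\in V$ for small $\varepsilon$, and fullness of $V$ then covers all $|t|\le\varepsilon$) and balanced (for $|t|\le1$, $0\le|t|(u\cdot|x|)\le u\cdot|x|\in V$, so $|t|(u\cdot|x|)\in V$ by fullness); the family is downward directed since $U_{u_1\vee u_2,\,V_3}\subseteq U_{u_1,V_1}\cap U_{u_2,V_2}$ whenever $V_3\in\mathcal{T}$ and $V_3\subseteq V_1\cap V_2$, where one uses that $0\le u_i\le u_1\vee u_2$ forces $0\le u_i\cdot|x|\le(u_1\vee u_2)\cdot|x|$ because products of elements of $X_+$ are in $X_+$; and, choosing a full $W\in\mathcal{T}$ with $W+W\subseteq V$ (possible as $\tau$ is linear and locally full), the inequality $0\le u\cdot|x+y|\le u\cdot|x|+u\cdot|y|$ gives $U_{u,W}+U_{u,W}\subseteq U_{u,V}$. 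Moreover $\tau_m$ is locally solid, because each basic $U_{u,V}$ with $V$ full is solid: if $x\in U_{u,V}$ and $|z|\le|x|$, then $0\le u\cdot|z|\le u\cdot|x|\in V$, hence $z\in U_{u,V}$.

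Next I would identify the convergences. By Definition~\ref{mc convergence}, $x_\alpha\convmtau x$ means $u\cdot|x_\alpha-x|\convtau 0$ for every $u\in X_+$, which (since $\mathcal{T}$ is a $\tau$-base at zero) says that for every $u\in X_+$ and $V\in\mathcal{T}$ the net $x_\alpha$ is eventually in $U_{u,V}$, i.e. $x_\alpha-x\to0$ in $\tau_m$, i.e. $x_\alpha\to x$ in $\tau_m$ by linearity of $\tau_m$. Thus ${\text{\boldmath{$m\tau$}}}={\text{\boldmath{$\tau_m$}}}$, which proves the main assertion. For $(i)$: if $X$ has a positive algebra unit and $\tau$ is Hausdorff, the ${\text{\boldmath{$\tau$}}}$-convergence is linear and full by Lemma~\ref{full topology}, so Lemma~\ref{mc implies c} gives ${\text{\boldmath{$m\tau$}}}\subseteq{\text{\boldmath{$\tau$}}}$; since $\tau$ Hausdorff means ${\text{\boldmath{$\tau$}}}\in T_1$ (Remark~\ref{linear topological convergence is T_1 iff topology is Hausdorff}$(i)$), Lemma~\ref{T_1 for linear conv} yields ${\text{\boldmath{$\tau_m$}}}={\text{\boldmath{$m\tau$}}}\in T_1$, whence $\tau_m$ is Hausdorff, again by Remark~\ref{linear topological convergence is T_1 iff topology is Hausdorff}$(i)$. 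For $(ii)$: if the multiplication is ${\text{\boldmath{$\tau$}}}$-continuous and $\tau$ is locally solid, then $\tau$ is locally full, so the construction applies; by Theorem~\ref{locally solid convergence} the ${\text{\boldmath{$\tau$}}}$-convergence is additive and lattice, hence Lemma~\ref{c implies mc} gives ${\text{\boldmath{$\tau$}}}\subseteq{\text{\boldmath{$m\tau$}}}={\text{\boldmath{$\tau_m$}}}$, which by the correspondence between topologies and their convergences recalled in the Introduction is equivalent to $\tau_m\subseteq\tau$.

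I expect the bulk of the work, and the only genuinely delicate point, to be the verification in the first paragraph that $B$ generates a linear, locally solid topology: the arithmetic parallels Remark~\ref{utau convergence}, but one must systematically use \emph{full} rather than solid $\tau$-neighborhoods and repeatedly invoke positivity of products of elements of $X_+$ together with fullness to pass along the chains of inequalities. Once $\tau_m$ is in place and the identity ${\text{\boldmath{$m\tau$}}}={\text{\boldmath{$\tau_m$}}}$ is established, parts $(i)$ and $(ii)$ are short deductions from Lemmas~\ref{mc implies c}, \ref{T_1 for linear conv}, \ref{c implies mc}, Theorems~\ref{full topology}, \ref{locally solid convergence}, and Remark~\ref{linear topological convergence is T_1 iff topology is Hausdorff}.
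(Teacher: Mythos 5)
Your proposal is correct and follows essentially the same route as the paper: the same base $\{U_{u,V}\}$ built from full $\tau$-neighborhoods of zero, the same verifications that it generates a linear locally solid topology, and the same identification of the ${\text{\boldmath{$m\tau$}}}$- and $\tau_m$-convergences. The only (harmless) divergence is in parts $(i)$ and $(ii)$, which you route through Lemmas \ref{mc implies c} and \ref{c implies mc} and the topology--convergence correspondence, whereas the paper argues directly (for $(i)$, $|x|=e\cdot|x|\convtau 0$ forces $x=0$; for $(ii)$, ${\text{\boldmath{$\tau$}}}$-continuity of the multiplication and of $|\cdot|$ places each $U_{a,A}$ in $\tau$); both deductions are valid.
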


\begin{proof}
Let $\mathcal{T}$ be the collection of all full $\tau$-neighborhoods of zero, 
$$
  U_{a,A}:=\{x\in X: a\cdot|x|\in A\}  \  \  \  \  (a\in X_+,  A\in\mathcal{T}),
  \eqno(17)
$$
and $\mathcal{B}:=\{U_{a,A}: a\in X_+, A\in\mathcal{T}\}$. We shall show that $\mathcal{B}$ is a zero base 
of some linear locally solid topology $\tau_m$ on $X$. For this purpose we need to show that: 
\begin{enumerate}
\item[$a)$] \ if $V_1,V_2\in\mathcal{B}$ then there exists $W\in \mathcal{B}$ with $W\subseteq V_1\cap V_2$;  
\item[$b)$] \ $\mathcal{B}$ consists of absorbing sets;
\item[$c)$] \ $\mathcal{B}$ consists of balanced sets;
\item[$d)$] \ For each $V\in \mathcal{B}$ there exists some $W\in \mathcal{B}$ with $W+W \subseteq V$;
\item[$e)$] \ $\mathcal{B}$ consists  of solid sets.
\end{enumerate}
For proving $a)$ take $U_{a,A},U_{b,B}\in\mathcal{B}$.  Since $(a\vee b)\cdot|x|\in A\cap B\subseteq A$ for every $x\in U_{a\vee b, A\cap B}$, 
then $a\cdot|x|\in A$, because $A$ is full and $0\le a\cdot|x|\le (a\vee b)\cdot|x|$. Thus $\mathcal{B}\ni U_{a\vee b, A\cap B}\subseteq U_{a, A}\cap U_{b, B}$. 
Condition $b)$ is an immediate consequence of (17) since $\mathcal{T}$ consists of absorbing sets. For proving $c)$ let $t\in \mathbb{R}$, $|t| \le 1$,
and $x \in U_{a,A}$. Then $0\le a\cdot|t x| \le a\cdot |x| \in A$ and hence $tx \in U_{a,A}$ since $A$ is a full neighborhood of zero. Thus we have shown 
every  $U_{a,A}\in\mathcal{B}$ is balanced. For proving $d)$ take $U_{a,A}\in\mathcal{B}$. Since $\mathcal{T}$ is a zero base for topology $\tau$,
there exists $A'\in\mathcal{T}$ with $A'  + A' \subseteq A$. If $x, y \in U_{a,A'}$ then $a\cdot|x|,a\cdot|y|\in A'$. It follows from
$a\cdot|x+y| \le a\cdot|x| + a\cdot|y| \in A' + A' \subseteq A$ that $x+y \in A$. Thus, we proved that $U_{a,A'} + U_{a,A'}  \subseteq U_{a,A}$.
For proving $e)$ take $x\in U_{a,A}$ and let $|y|\le|x|$. Then $|a\cdot|y||=a\cdot|y|\le a\cdot|x|\in A$ implies $a\cdot|y|\in A$ since $A$ is full, 
and hence $y\in U_{a,A}$. 

We have shown that: $\mathcal{B}$ is a zero base of a linear topology on $X$, which will be called by $\tau_m$;
$\mathcal{B}$ consists of solid (but not necessary full) $\tau_m$-neighborhoods of $0$.

Clearly $x_\alpha\convmtau 0$ iff $a\cdot|x_\alpha|\convtau 0$  for every $a\in X_+$ iff 
for every $a\in X_+$ and every $A\in\mathcal{T}$ there exists $\alpha_0$ such that 
$a\cdot|x_\alpha|\in A$ for all $\alpha \ge \alpha_0$
iff $x_\alpha\in U_{a,A}\in\mathcal{B}$ for all $\alpha\ge\alpha_0$ iff $x_\alpha\convtaum 0$.  

$(i)$ \  Now let $e$ be a positive algebra unit in $X$ and let $\tau$ be Hausdorff. Since $x_\alpha\convmtau 0$ iff $x_\alpha\convtaum 0$, 
in order to prove  that $\tau_m$ is Hausdorff, it is enough to show that the ${\text{\boldmath{$m\tau$}}}$-conver\-gence is $T_1$.
By Lemma \ref{T_1 for linear conv}, we have to show that if $x\convmtau 0$ then $x=0$. So let $x\convmtau 0$.  Then 
$u\cdot|x|\convtau 0$ for every $u\in X_+$, in particular,
$|x|=e\cdot|x|\convtau 0$. This means $|x|=0$ and hence $x=0$, as required. 

$(ii)$ \ Now, let the  multiplication in $X$ be ${\text{\boldmath{$\tau$}}}$-continuous and $\tau$ be locally solid. 
Then $|\cdot|$ is also a ${\text{\boldmath{$\tau$}}}$-continuous operation in $X$.
Then $\tau_m$-neighborhoods from $(17)$ belong to $\tau$, and hence $\tau_m\subseteq\tau$. 
\end{proof}

We include the following example of a commutative non-unital locally solid almost $f$-algebra $(X,\tau)$ in which the algebra multiplication 
is not ${\text{\boldmath{$\tau$}}}$-continuous.  

\begin{example}\label{last}
Consider the $l$-algebra $X=(\ell_\infty,*)$ from Example $\ref{almost f-algebra in which multiplication is not 0-cont}$ 
with respect to the locally solid topology inherited from the Tychonoff topology $\tau$ on $X^u=s=\mathbb{R}^\mathbb{N}$.
Since $\mathbb{uo}$-convergence on $X$ coincides with the ${\text{\boldmath{$\tau$}}}$-convergence $($see, e.g., $\cite[Thm.2]{DEM3}$$)$ and since 
$*$ is not $\mathbb{uo}$-continuous by Example $\ref{almost f-algebra in which multiplication is not 0-cont}$, 
the multiplication $*$ in the locally solid $l$-algebra $(\ell_\infty,*,\tau)$ is not ${\text{\boldmath{$\tau$}}}$-continuous.
\end{example}

However, the algebra multiplication in the almost $f$-algebra $(X,\tau)$ from Example \ref{last} is ${\text{\boldmath{$m\tau$}}}$-continuous. 
More generally, let $(X,\tau)$ be a commutative locally full $l$-algebra. 
\begin{enumerate}
\item[$(i)$] \ Since the ${\text{\boldmath{$\tau$}}}$-convergence on $X$ is a linear full lattice convergence by Theorem $\ref{locally solid convergence}$, 
the ${\text{\boldmath{$m\tau$}}}$-convergence is a linear full lattice convergence by Theorem $\ref{full lattice convergence to mc}$. 
The ${\text{\boldmath{$m\tau$}}}$-continuity of the algebra multiplication in $X$ follows then from Lemma $\ref{if c is full then multiplication is mc-continuous}$.
\item[$(ii)$] \ If $X$ has a positive algebraic unit then, by Lemma $\ref{full topology}$, Lemma $\ref{mc implies c}$, and Theorem \ref{when mc=mmc},
${\text{\boldmath{$mm\tau$}}}={\text{\boldmath{$m\tau$}}}$ and the ${\text{\boldmath{$m\tau$}}}$-convergence implies ${\text{\boldmath{$\tau$}}}$-convergence.
If $X$ is non-unital then, in general, the ${\text{\boldmath{$m\tau$}}}$-convergence does not imply the ${\text{\boldmath{$\tau$}}}$-convergence, e.g. by Example \ref{c_00}.
\end{enumerate}

{\tiny 

}
\end{document}